\newtheorem{lemma}{Lemma}[section]
\newtheorem{theorem}{Theorem}[section]
\newtheorem{proposition}{Proposition}[section]
\newtheorem{remark}{Remark}[section]
\numberwithin{equation}{section} \numberwithin{theorem}{section}
\numberwithin{example}{section} \numberwithin{remark}{section}
\numberwithin{figure}{section} \numberwithin{algorithm}{section}
\title[Area growth estimate]{An area growth estimate of the Liouville equation}
\author{Xiaohan Cai}
\address{School of Mathematical Sciences, Shanghai Jiao Tong University, Shanghai 200240}\email{xiaohancai@sjtu.edu.cn}
\author{Mijia Lai}
\address{School of Mathematical Sciences, Shanghai Jiao Tong University, Shanghai 200240}\email{laimijia@sjtu.edu.cn}
\author{Chilin Zhang}
\address{School of Mathematical Sciences, Fudan University, Shanghai 200433, China}\email{zhangchilin@fudan.edu.cn}
\begin{document}
\begin{abstract}
    We establish an area growth estimate for solutions that are bounded from above of the Liouville equation
    \begin{equation}
        \Delta u+K e^{2u}=0
    \end{equation}
    with a positive pinched curvature $0<\lambda\leq K\leq\Lambda$. As an application, we provide a new proof of Eremenko-Gui-Li-Xu's result in \cite{EGLX}. We also classify solutions with an upper bound in the half plane with the boundary having constant geodesic curvature.
\end{abstract}
\maketitle

\section{Introduction} 

In this paper, we are concerned with the classification of the entire solution $u$ of the Liouville equation 
\begin{equation}\label{eq. EqL}
\Delta u+e^{2u} =0
\end{equation}
in the plane. 

Historically, it was Monge who first derived this equation in the study of prescribing constant Gauss curvature problem. Geometrically, if $u$ is a solution of \eqref{eq. EqL}, then the conformal metric $g=e^{2u} g_0$ has constant Gauss curvature $K_g\equiv 1$. Liouville wrote down a representation formula for the solution in 1853 \cite{L}. More precisely, let $\Omega$ be a simply connected domain in $\mathbb{R}^2$, then all real solutions of (\ref{eq. EqL}) in $\Omega$ are of the form
    \begin{align}\label{eq.Liouville's formula}
        u(z)=\ln \frac{2|f'(z)|}{1+|f(z)|^2},
    \end{align}
where $f$ is a locally univalent meromorphic function with simple poles in $\Omega$. Identifying the pole with the north pole of the Riemann sphere, $f$ can be viewed as a map $f: \Omega\to \mathbb{S}^2$, and the pull back of the round unit sphere metric via $f$ is exactly $e^{2u}g_0$. We refer $f$ as a developing map of the solution $u$, which is unique up to a composition with $\frac{pz-\bar{q}}{qz+\bar{p}}$, with $|p|^2+|q|^2=1$.
Later, Chou-Wan \cite{CW} and Brito-Hounie-Leite \cite{BHL} extend the Liouville's formula to non simply connected domains. 

From PDE's viewpoint, it is desirable to derive explicit expressions for $f$ under various assumptions. In their seminal paper \cite{ChL}, Chen and Li proved that all solutions with finite total curvature ($\int_{\mathbb{R}^2} e^{2u} dx<\infty$) must be radially symmetric. Numerous alternative proofs of this Chen-Li result have emerged subsequently, see e.g., \cite{CW, CK, HW, JW}. We also refer the readers to a recent survey article \cite{CaL}.

Eremenko-Gui-Li-Xu \cite{EGLX} made a breakthrough on the Liouville equation recently. Among other things, they classified all solutions of \eqref{eq. EqL} that are bounded from above. 
\begin{theorem}[Eremenko-Gui-Li-Xu]\label{thm. EGLX}
   Let $u$ be an entire solution to the Liouville equation \eqref{eq. EqL}, then $u$ is bounded from above, if and only if either
$$
u(z)=\ln \left(\frac{2}{1+|z|^2}\right),
$$
or
\begin{align} \label{eq: t}
u_t(x, y)=\ln \left(\frac{2 e^x}{1+t^2+2 t e^x \cos y+e^{2 x}}\right) \quad \text { for some } \quad t \geq 0,
\end{align}
up to a normalization. By normalization, we mean that if $u(z)$ is an entire solution, then 
    \begin{align}\label{transformation}
  u_{\lambda, z_0}(z):=u(\lambda z+z_0)+\ln |\lambda|
        \quad \text{for some } z_0\in\mathbb{C},\lambda\in\mathbb{C}\setminus\{0\}
    \end{align}
is also a solution.
\end{theorem}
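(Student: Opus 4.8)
The plan is to obtain Theorem~\ref{thm. EGLX} from the Liouville representation \eqref{eq.Liouville's formula} together with an \emph{area growth estimate}: for every entire solution $u$ of \eqref{eq. EqL} bounded above, say $u\le C_0$,
\begin{equation}\label{plan-area}
A(R):=\int_{B_R}e^{2u}\,dx\ \le\ C(C_0)\,R\qquad\text{for all }R\ge1,
\end{equation}
where $B_R$ is the disk of radius $R$ about the origin. This is the estimate named in the title (proved there, more generally, for the pinched equation $\Delta u+Ke^{2u}=0$ with $0<\lambda\le K\le\Lambda$). The ``if'' direction of the theorem is a direct verification, so only ``only if'' is at stake; I will first carry out the — essentially formal — deduction of the classification from \eqref{plan-area}, and then indicate how \eqref{plan-area} itself is proved, which is the one substantial point.

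Granting \eqref{plan-area}, write $u=\ln\frac{2|f'|}{1+|f|^2}$ with $f$ locally univalent meromorphic on $\mathbb{C}$, so $e^{2u}=4(f^\#)^2$ with $f^\#=\frac{|f'|}{1+|f|^2}$ the spherical derivative. Then \eqref{plan-area} says $\int_{B_R}(f^\#)^2=O(R)$, hence the Ahlfors–Shimizu characteristic $\mathfrak T(R,f)=\int_0^R\frac{dt}{t}\cdot\frac1\pi\int_{B_t}(f^\#)^2\,dx\,dy$ is $O(R)$, so $f$ has order at most $1$. Since $f$ is locally univalent its Schwarzian $S_f=\{f,z\}$ is an \emph{entire} function and $f=w_1/w_2$ for a basis of solutions of $w''+\tfrac12 S_fw=0$ of order at most $\mathrm{ord}(f)$; it is classical that this is compatible with $\mathrm{ord}(f)\le1$ only when $S_f\equiv c$ is constant (a non‑constant polynomial Schwarzian of degree $n$ would force $\mathrm{ord}(f)=\tfrac{n+2}{2}>1$, and a transcendental one would force infinite order). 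If $c=0$, then $f$ is a Möbius map; post‑composing with a suitable rotation of $\mathbb{S}^2$ — which leaves $u$ unchanged — and then applying the normalization \eqref{transformation} turns $f$ into $z$, i.e. $u=\ln\frac{2}{1+|z|^2}$. If $c\ne0$, then $f$ is a Möbius function of $e^{\beta z}$, $\beta\ne0$; the change $z\mapsto z/\beta$ makes it $\frac{ae^z+b}{ce^z+d}$, and a rotation of $\mathbb{S}^2$ together with a translation $z\mapsto z+z_0$ — whose phase is still at our disposal — reduces it to $e^z+t$ with $t\ge0$; since $1+|e^z+t|^2=1+t^2+2te^x\cos y+e^{2x}$, this is precisely $u=u_t$ of \eqref{eq: t}. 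No other solutions arise.

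It remains to prove \eqref{plan-area}, which is the crux. Work with the circular average $\bar u(R)=\frac1{2\pi R}\int_{\partial B_R}u$. Integrating the equation over $B_R$ gives $\bar u'(R)=\frac1{2\pi R}\int_{B_R}\Delta u=-\frac{A(R)}{2\pi R}$, i.e. $A(R)=-2\pi R\,\bar u'(R)$; moreover $\Delta u=-e^{2u}\le0$ makes $t\mapsto\bar u(e^t)$ concave, and $\bar u\le C_0$. Concavity yields $\tfrac{d}{dt}\bar u(e^t)\ge\bar u(e^{t+1})-\bar u(e^t)$, hence $A(e^t)\le2\pi\bigl(C_0-\bar u(e^{t+1})\bigr)$, so \eqref{plan-area} is \emph{equivalent} to a linear lower bound $\bar u(R)\ge-CR$. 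Establishing that lower bound is where the genuine difficulty lies, and where the hypothesis $u\le C_0$ on \emph{all} of $\mathbb{R}^2$ (not merely locally) and the equation itself must both be used: wherever $u$ is very negative, $e^{2u}$ is negligible and $u$ is nearly harmonic there, so its circular averages cannot drop faster than logarithmically along such a region; turning this into a quantitative bound requires improving the exponentially weak control that Harnack and Newtonian‑potential estimates give on unit balls into one whose constant does not compound when chained across a sequence of annuli, and it is for exactly such a robust chaining that the pinched‑curvature hypothesis is the natural setting. I expect this area estimate to be the main obstacle; once it is in place, the deduction above finishes the proof.
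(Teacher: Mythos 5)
The central gap is that you have misread the area estimate the paper actually establishes. Theorem~\ref{thm(main). density estimate} gives $\int_{B_R}e^{2u}\,dx=O(R^{2-\epsilon})$ for \emph{some} $\epsilon>0$ depending on the upper bound $M$ and the pinching constants; inspecting the proof of Lemma~\ref{lem. simplified density estimate}, one finds $\epsilon=-\log_N\bigl(1-\tfrac{1}{2N^2}\bigr)$ where $N$ must be taken large, so $\epsilon$ is \emph{small}. The bound $A(R)=O(R)$ that you take as your starting point (which is $\epsilon=1$) is stated in the introduction only as a \emph{conjecture} for the optimal exponent; the paper does not prove it, and neither do you — you explicitly leave the crucial lower bound $\bar u(R)\ge -CR$ as ``the main obstacle.'' Without $O(R)$ your deduction collapses at the Nevanlinna step: from $A(R)=O(R^{2-\epsilon})$ you only get $\rho(f)\le 2-\epsilon<2$, hence $d(P)=2(\rho(f)-1)\le 2-2\epsilon$, which with the paper's small $\epsilon$ only yields $d(P)\le 1$. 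Your argument ``degree-$n$ Schwarzian forces $\mathrm{ord}(f)=\frac{n+2}{2}$'' then does \emph{not} rule out $n=1$, since $\frac{3}{2}\le 2-\epsilon$ for small $\epsilon$; so the constancy of $S_f$ is precisely what is missing, and it does not come ``classically'' from the order bound that is actually available. You would have to separately eliminate the $d(P)=1$ (Airy-type) case, which is a genuine additional step.

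The paper's actual proof of Theorem~\ref{thm. EGLX} avoids the full-strength Nevanlinna machinery. It converts the area bound $O(R^{2-\epsilon})$ via the self-similar Harnack inequality and Schauder estimates (Propositions~\ref{prop. C^1-estimate}, \ref{prop. C^2-estimate}) into pointwise growth $u=O(R^{2-\epsilon})$, $Du=O(R^{1-\epsilon^2/2})$, $D^2u=O(R^\alpha\ln R)$; this makes the holomorphic function $u_{zz}-u_z^2$ grow like $o(|z|^2)$, hence (Liouville for holomorphic functions) it is $az+b$; then $a=0$ is forced by the winding-number argument of Lemma~\ref{lem. not linear} — an elementary degree-parity observation comparing $u_{zz}-az-b$ to $u_z^2$ — not by an ODE-order count; finally, a direct PDE computation with $v=e^{-u}$ produces the two families. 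Your concavity observation about circular averages is correct but reformulates, rather than proves, the optimal area bound; if you want to close the argument along your lines, either prove $A(R)=O(R^{2-\epsilon})$ with $\epsilon>1/2$ (not currently known), or supplement your Nevanlinna reduction with a separate argument eliminating a nonconstant linear Schwarzian, analogous to Lemma~\ref{lem. not linear}.
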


The former solution is the Chen-Li's finite total curvature solution, which yields the standard round unit sphere. The corresponding developing function for this solution is simply $f(z)=z$. The $t$-family solution $u_t$ emerges from the developing function $f(z)=e^z+t$. This map describes the universal cover of the unit sphere with two punctures. 

Eremenko-Gui-Li-Xu's proof relies heavily on the Nevanlinna theory and the asymptotic integration theory for the complex linear ODE of the form 
\begin{equation}
w^{\prime \prime}+P(z) w=0, \quad z \in \mathbb{C}.
\end{equation}

We would also like to mention that the Liouville equation is closely related to the minimal surfaces, see \cite{BLdSN}. In particular, Erenmenko-Gui-Li-Xu's theorem in fact indicates the following: let $\mathbb{X}: \mathbb{R}^2\to \mathbb{R}^3$ be a nonumbilic minimal immersion with bounded Gauss curvature, then $\mathbb{X}(\mathbb{R}^2)$ must be isometric to one of followings: Enneper surface, Catenoid and Bonnet family of minimal surfaces. We refer the reader to Section 5.1 of \cite{CaL} for details.

In this paper, we present a novel proof for Theorem~\ref{thm. EGLX}, one that embraces a more PDE-oriented approach. The strategy of our proof is inspired  by a proof of Chen-Li's result by Hang and Wang \cite{HW}. Here, we outline the key elements of our approach.

The first ingredient is an observation that if $u$ is a solution of \eqref{eq. EqL}, then
\begin{equation}
    u_{zz}-u_z^2
\end{equation}
is holomorphic. Under the assumption $\int_{\mathbb{R}^2} e^{2u} dx<\infty$, Hang and Wang obtained some elliptic estimates showing that $|\nabla u|=o(1)$ and $|\nabla^2 u|=o(1)$. Consequently $u_{zz}-u_z^2\equiv 0$ and the classification follows. 

We observe first that $u_{zz}-u_z^2$ being a non-zero constant corresponds to a solution $u_t$ for some $t$.  Our objective is then to establish a relatively relaxed elliptic estimate for $u$, premised on the assumption that it is bounded from above. To this end, we derive a "self-similar" Harnack inequality, detailed in Lemma~\ref{lem. self similar harnack}.  It leads to an area growth estimate for the solution of the Liouville equation which is bounded from above. 

Here is the statement of our main result: the area growth estimate for the solution of the Liouville equation with positive pinched prescribed curvature. Indeed we can treat more general equations, for example solutions with a global upper bound to the equation $\Delta u=-\chi_{\{u\geq0\}}$.
\begin{theorem}\label{thm(main). density estimate}
    Let $u$ be an entire solution of
    \begin{equation}\label{eq. EqL variable curvature}
        \Delta u+K e^{2u}=0 
    \end{equation}
    which is bounded from above. Here $K$ is a measurable function satisfying $0<\lambda\leq K\leq\Lambda$. Then there exists $\epsilon>0$ such that 
    \begin{equation}
    \int_{B_R} e^{2u} dx =O(R^{2-\epsilon}), \text{as $R \to \infty$}.
    \end{equation}
\end{theorem}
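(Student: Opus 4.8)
The plan is to pass to logarithmic scales, reduce the estimate to a geometric-decay property of the rescaled area, and extract that decay from the self-similar Harnack inequality of Lemma~\ref{lem. self similar harnack} together with the superharmonicity of $u$. Write $u\le C_0$ on $\mathbb{R}^2$, set $m(R)=\int_{B_R}e^{2u}\,dx$, and let $\bar u(r)$ denote the average of $u$ over $\partial B_r$. Since $K\ge\lambda>0$ the function $u$ is superharmonic, and from $2\pi r\,\bar u'(r)=\int_{B_r}\Delta u=-\int_{B_r}Ke^{2u}$ one sees that $\bar u$ is strictly decreasing; combining this with Jensen's inequality $\int_{\partial B_s}e^{2u}\,d\sigma\ge 2\pi s\,e^{2\bar u(s)}$ gives $m(r)\ge\pi r^2 e^{2\bar u(r)}$, and hence
\[
\frac{d}{dr}\,e^{-2\bar u(r)}\;=\;-2\bar u'(r)\,e^{-2\bar u(r)}\;\ge\;\frac{\lambda}{\pi r}\,m(r)\,e^{-2\bar u(r)}\;\ge\;\lambda r .
\]
Integrating yields $e^{-2\bar u(r)}\ge\tfrac{\lambda}{4}r^2$ for $r$ large, i.e. $\bar u(r)\le-\ln r+C$: the spherical average of the conformal factor $e^{2u}$ at radius $r$ is at most $Cr^{-2}$. (This first step uses only $K\ge\lambda$, not the upper bound.)

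Next I would reduce the theorem to a single dyadic inequality: it suffices to produce constants $\Theta<4$, $\gamma<2$ and $R_0$ so that $m(2R)\le\Theta\,m(R)+C R^{\gamma}$ for all $R\ge R_0$, since iterating over $R=2^kR_0$ and summing the geometric series gives $m(R)=O\!\left(R^{\max(\log_2\Theta,\,\gamma)}\right)$, which is the claimed $O(R^{2-\epsilon})$ with $\epsilon=2-\max(\log_2\Theta,\gamma)>0$. To prove the dyadic inequality, rescale at scale $R$ via the normalization \eqref{transformation}: $u_R(w)=u(Rw)+\ln R$ solves \eqref{eq. EqL variable curvature} with curvature $K(Rw)$ of the same pinching on a large ball, satisfies $\int_{B_2}e^{2u_R}=m(2R)$ and $\int_{B_1}e^{2u_R}=m(R)$, and by the first step has $\bar u_R(\rho)\le C$ for $1\le\rho\le2$. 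The natural way to use Lemma~\ref{lem. self similar harnack} is to apply it on balls whose radius is comparable to the distance to the origin: this upgrades the averaged bound into honest pointwise/integral control of $e^{2u_R}$ on the annulus $B_2\setminus B_1$, ruling out the concentration of the conformal factor near $\partial B_R$ that the spherical average cannot detect. Combined with $K\ge\lambda>0$ — which, through the Gauss--Bonnet identity on each dyadic shell, forces a definite fraction of the curvature mass to sit strictly inside — this should give $\int_{B_2\setminus B_1}e^{2u_R}\le\theta\,m(2R)+C$ for some fixed $\theta<\tfrac34$, that is $m(2R)\le(1-\theta)^{-1}m(R)+C$ with $(1-\theta)^{-1}<4$.

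The essential difficulty is this last point: securing the \emph{strict} improvement $\theta<\tfrac34$ (equivalently $\Theta<4$, equivalently that the $g$-area density $R^{-2}m(R)$ does not stabilize at a positive limit). The naive bound $\int_{B_2\setminus B_1}e^{2u_R}\le\pi e^{2C_0}R^{2}\cdot(\text{const})$ only reproduces the trivial $m(R)=O(R^{2})$, and producing the $\epsilon$-gain forces one to use the two hypotheses simultaneously — the global upper bound and the positive lower bound on $K$ — with the self-similar (rather than a plain interior) Harnack playing the decisive role, since it is the only estimate that respects the dilation structure of the problem. That structure is cleanest after the change of variables $z=e^{s+i\phi}$: \eqref{eq. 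EqL variable curvature} becomes a Liouville-type equation on the half-cylinder with $u\le C_0$, the dilations of \eqref{transformation} become translations in $s$, the Harnack constant becomes genuinely uniform in $s$, and the area over $B_R$ becomes $\int_0^{\ln R}\!\!\int_{S^1}e^{2U}$ with $U=u+s$; the borderline scenario to exclude is that of an entire, bounded-above solution whose cylindrical mass over unit windows $[s,s+1]\times S^1$ fails to decay, which I would rule out by a compactness/blow-up contradiction against the class of entire solutions bounded from above. The remaining ingredients — the averaged decay of the first step, the Gauss--Bonnet bookkeeping on dyadic shells, and the dyadic iteration — are routine, and the pinching $\lambda\le K\le\Lambda$ enters only through the constants, so the same scheme covers the general equation \eqref{eq. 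EqL variable curvature} and, with obvious modifications, $\Delta u=-\chi_{\{u\ge0\}}$.
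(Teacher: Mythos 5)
Your opening reduction is sound — $\bar u(r)\le-\ln r+C$ follows cleanly from superharmonicity and Jensen, and the dyadic reformulation ($m(2R)\le\Theta\,m(R)+CR^{\gamma}$ with $\Theta<4$, $\gamma<2$ suffices) is correct — but be careful with the parenthetical gloss: $\bar u(r)\le-\ln r+C$ controls $e^{2\bar u(r)}$, \emph{not} the spherical average of $e^{2u}$; Jensen runs the other way, and the average of $e^{2u}$ on $\partial B_r$ is bounded below, not above, by $e^{2\bar u(r)}$. You acknowledge this implicitly when you note that concentration near $\partial B_R$ is ``what the spherical average cannot detect,'' but the claim as written is false and worth flagging because it is exactly the point where this route stops being easy.

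The genuine gap is the step you yourself label ``the essential difficulty.'' Everything after ``the natural way to use Lemma~\ref{lem. self similar harnack}\dots'' is a plan, not a proof: ``this should give $\theta<3/4$,'' ``Gauss--Bonnet bookkeeping forces a definite fraction of the curvature mass inside,'' ``I would rule out by a compactness/blow-up contradiction.'' None of these is executed, and the last one in particular looks circular: compactness of the class of entire bounded-above solutions under the normalization \eqref{transformation} is essentially what the classification theorem delivers, so you cannot use it to prove the area estimate that feeds into that classification. What is missing is a \emph{quantitative} mechanism that converts ``nonvanishing rescaled density on an annulus'' into a contradiction. The paper supplies exactly this via a second tool you never invoke: a quantitative maximum principle (Lemma~\ref{lem. maximum principle}) applied to $v=M-u\ge0$ on the square $Q_{N^{k+1}}$. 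Under the contradiction hypothesis that every $N\times N$ sub-square has density $\ge\sigma^{k}/2$, the self-similar Harnack gives an upper bound $\max_{\partial Q_{N^{k+1}}}v\le C(k+1+\bar\Lambda N^{2k}\sigma^{k}\ln N)$ (because some point in each boundary sub-square has $v\le k+1$), while the maximum principle forces $v(0)\le\max_{\partial}v-c\bar\lambda\sigma^{k}N^{2k+2}$; together with $v\ge0$ these are incompatible once $N$ is large depending only on $(\bar\lambda,\bar\Lambda)$. That two-sided squeeze — Harnack from above, ABP-type comparison from below, on the \emph{same} square — is the device that replaces your unproved compactness step, and it is where the upper bound on $u$ (giving $v\ge0$) and the positive lower bound on $K$ (giving the drop $c\bar\lambda\sigma^{k}N^{2k+2}$) are used \emph{simultaneously}, as you correctly predicted they must be. Your proposal converges with the paper on the scaffolding (scale iteration, self-similar Harnack) but is missing the load-bearing half of the argument.
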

As a direct consequence, we obtain growth rate of $u$ up to second derivatives.
\begin{theorem}\label{thm(main). derivative growth rate}
    Under the assumptions of Theorem~\ref{thm(main). density estimate}, we have 
    \begin{equation}
        \|u\|_{L^{\infty}(B_{R})}=O(R^{2-\epsilon}),\quad\|Du\|_{L^{\infty}(B_{R})}=O(R^{1-\epsilon^{2}/2}).
    \end{equation}
    If we further assume $[K]_{C^{\alpha}(\mathbb{R}^{2})}<\infty$ in \eqref{eq. EqL variable curvature}, then
    \begin{equation}
        \|D^{2}u\|_{L^{\infty}(B_{R})} 
        =O(R^{\alpha}\ln{R}).
    \end{equation}
\end{theorem}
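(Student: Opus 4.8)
The plan is to bootstrap from the area estimate in Theorem~\ref{thm(main). density estimate} using the structure of the equation and standard elliptic theory, handling the three estimates in the order $\|u\|_\infty$, $\|Du\|_\infty$, $\|D^2u\|_\infty$. For the $L^\infty$ bound on $u$ itself, I would fix a point $x_0\in\partial B_R$ and write $u=v+h$ on $B_1(x_0)$, where $v$ solves $\Delta v=-Ke^{2u}$ with zero boundary data and $h$ is harmonic. Since $u$ is bounded above, $e^{2u}\le C$, so Newtonian potential estimates give $|v|\le C$ on $B_{1/2}(x_0)$ with a constant independent of $R$; moreover $\|v\|_{L^1(B_1(x_0))}\le C\int_{B_1(x_0)}e^{2u}$. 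Then $h=u-v$ is harmonic and bounded above on $B_1(x_0)$, so by the Harnack inequality applied to the nonnegative harmonic function $\sup_{B_1(x_0)}u - h$ (finiteness of $\sup$ being global), the oscillation of $h$ on $B_{1/2}(x_0)$ is controlled by its mean, hence $|h(x_0)|\le C + C\int_{B_1(x_0)}e^{2u}\,dx \le C + C\int_{B_{R+1}}e^{2u}\,dx = O(R^{2-\epsilon})$. Taking the sup over $x_0$ gives $\|u\|_{L^\infty(B_R)}=O(R^{2-\epsilon})$.

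For the gradient estimate I would use interior gradient estimates for the Poisson equation at scale $r=r(R)$ to be optimized. On $B_r(x_0)$ with $x_0\in\partial B_R$, interior estimates give
\begin{equation}
|Du(x_0)| \le \frac{C}{r}\,\mathrm{osc}_{B_r(x_0)}u + C\,r\,\|Ke^{2u}\|_{L^\infty(B_r(x_0))} \le \frac{C}{r}\,O((R+r)^{2-\epsilon}) + C r,
\end{equation}
using the $L^\infty$ bound just obtained and $e^{2u}\le C$. Choosing $r=R^{1-\epsilon^2/2}$ (so that $R^{2-\epsilon}/r = R^{1+\epsilon^2/2-\epsilon}\le R^{1-\epsilon^2/2}$, since $\epsilon^2 - \epsilon \le -\epsilon^2/2$ for $\epsilon$ small, and the second term $Cr=CR^{1-\epsilon^2/2}$ matches) yields $\|Du\|_{L^\infty(B_R)}=O(R^{1-\epsilon^2/2})$; one checks the exponent balancing carefully, replacing $R$ by $R+r\sim R$ throughout since $r\le R$.

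For the Hessian estimate, under $[K]_{C^\alpha}<\infty$, I would apply interior Schauder estimates for $\Delta u = -Ke^{2u}$. The right-hand side $-Ke^{2u}$ lies in $C^\alpha$ with seminorm controlled by $[K]_{C^\alpha}\|e^{2u}\|_\infty + \|K\|_\infty [e^{2u}]_{C^\alpha} \le C(1 + \|Du\|_{L^\infty(B_{R+1})}) = O(R^{1-\epsilon^2/2})$, using $e^{2u}\le C$ and $[e^{2u}]_{C^\alpha(B_1(x_0))}\le C\|Du\|_{L^\infty}$. Then on $B_1(x_0)$, Schauder gives
\begin{equation}
\|D^2u\|_{L^\infty(B_{1/2}(x_0))} \le C\big(\|u\|_{L^\infty(B_1(x_0))} + [Ke^{2u}]_{C^\alpha(B_1(x_0))}\big) = O(R^{2-\epsilon}) + O(R^{1-\epsilon^2/2}),
\end{equation}
which is $O(R^{2-\epsilon})$ and not yet the claimed $O(R^\alpha\ln R)$. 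To get the sharper bound I would instead run the Schauder/potential estimate at the small scale $r$, exploiting the holomorphic quantity $u_{zz}-u_z^2$: on each unit ball the oscillation of $u$ is $O(\|Du\|_\infty)=O(R^{1-\epsilon^2/2})$, not $O(R^{2-\epsilon})$, and iterating the logarithmic-cutoff Newtonian potential estimate for $\Delta u$ with $C^\alpha$ data of size $O(1)$ (after subtracting the harmonic part, whose Hessian is controlled by its oscillation via mean-value) produces the $\ln R$ factor; the key point is that the \emph{harmonic part's} second derivatives are $O(r^{-2}\cdot \mathrm{osc})$, and summing a logarithmic number of dyadic scales from $1$ up to a polynomial radius gives the $R^\alpha \ln R$.

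The main obstacle is the last step: getting the genuinely sharp $O(R^\alpha\ln R)$ rather than the crude polynomial bound. The subtlety is that naive Schauder on unit balls feeds in $\|u\|_\infty=O(R^{2-\epsilon})$, which is far too lossy; one must argue that the \emph{large} part of $u$ is essentially harmonic (because $\Delta u=-Ke^{2u}$ is bounded, even $C^\alpha$ with small seminorm after rescaling), so its Hessian does not inherit the full size of $u$ — only a logarithmic loss from summing scales. Making this decomposition rigorous, and in particular controlling $[e^{2u}]_{C^\alpha}$ on unit balls purely by $\|Du\|_\infty$ uniformly in $R$, is where the care is needed; everything else is a routine application of the elliptic estimates cited above together with Theorem~\ref{thm(main). density estimate}.
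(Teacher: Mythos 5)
Your proposal splits into three steps and the middle one is fine, but the first and third have genuine gaps.

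\textbf{The $L^\infty$ bound.} Decomposing $u=v+h$ on a \emph{unit} ball $B_1(x_0)$ cannot propagate information from a fixed reference point to $x_0\in\partial B_R$. On $B_1(x_0)$ the Newtonian part satisfies $|v|\le C$, and $h(x_0)=\mathrm{avg}_{\partial B_1(x_0)}u$ by the mean value property, so all this shows is $|u(x_0)-\mathrm{avg}_{\partial B_1(x_0)}u|\le C$ — a purely local statement. The Harnack inequality for the nonnegative harmonic function $\sup_{B_1(x_0)}u-h$ controls the ratio between its values, not the absolute size of $h$; it gives no bound on $|h(x_0)|$ without knowing $h$ (equivalently $u$) is large somewhere in $B_1(x_0)$, and the density $\int_{B_1(x_0)}e^{2u}$ is uniformly \emph{bounded above}, not below, so that input is unavailable. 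As written, your chain $|h(x_0)|\le C+C\int_{B_1(x_0)}e^{2u}\le C+C\int_{B_{R+1}}e^{2u}$ makes the middle quantity $O(1)$ and would falsely conclude $u=O(1)$. What the paper actually uses here is the self-similar Harnack (Lemma~\ref{lem. self similar harnack}) applied across the full scale $N^k\sim R$: the special dyadic $L^1$ bounds on $e^{2u}$ supplied by Theorem~\ref{thm(main). density estimate} make the logarithmic Newtonian potential summable over scales without a $\log R$ loss, yielding $-u(y)\le C(-u(0)+R^{2-\epsilon}\ln N)$. If one instead decomposes on $B_R$ and uses only the crude bound $\int_{B_R}e^{2u}=O(R^{2-\epsilon})$, one gets $O(R^{2-\epsilon}\log R)$; the self-similar structure is precisely what removes the $\log$.

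\textbf{The gradient bound.} Conditional on the $L^\infty$ estimate, your argument — apply the interior gradient estimate on an intermediate ball $B_r(x_0)$ with $L^\infty$ control of the right-hand side and optimize $r\sim R^{1-\epsilon^2/2}$ — is correct and gives the stated exponent. The paper instead applies a scaled $W^{2,p}$-type ($C^{1,1-\epsilon}$) estimate directly on $B_R$, bounding $\|Ke^{2u}\|_{L^{2/\epsilon}(B_R)}$ by interpolating between the area estimate ($L^1$) and $e^{2u}\le C$ ($L^\infty$). Both routes land on $R^{1-\epsilon^2/2}$; yours avoids interpolation at the cost of an extra optimization, the paper's works at a single scale.

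\textbf{The Hessian bound.} You correctly identify that unit-scale Schauder is too lossy, but the fix you sketch (subtract a harmonic part and sum over dyadic scales) does not close the real gap, which is controlling the \emph{H\"older seminorm of the data} $[Ke^{2u}]_{C^\alpha}$. A priori $[e^{2u}]_{C^\alpha}\lesssim \|e^{2u}\|_\infty\|Du\|_\infty$, which by the previous step is only $O(R^{1-\epsilon^2/2})$ — polynomial, not logarithmic — so any Schauder iteration feeding in this seminorm cannot reach $R^\alpha\ln R$. The missing idea is a dichotomy argument: applying Lemma~\ref{lem. self similar harnack} (with $k=1$) on each $B_4(x)$, $x\in B_R$, one shows that either $\min_{B_2(x)}u\ge -C\ln R$, in which case $u$ has oscillation $O(\ln R)$ on $B_2(x)$ and local Schauder gives $[e^{2u}]_{C^\alpha(B_1(x))}\le C\ln R$; or $\max_{B_2(x)}u\le -10\ln R$, in which case $e^{2u}\le R^{-20}$ is so small that $[e^{2u}]_{C^\alpha(B_1(x))}\le R^{-20}[u]_{C^\alpha(B_1(x))}=O(1)$ despite the polynomial size of $[u]_{C^\alpha}$. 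Combined with $e^{2u}$ being globally bounded, this gives $[Ke^{2u}]_{C^\alpha(B_R)}=O(\ln R)$. Then the scaled $C^{2,\alpha}$ Schauder estimate on $B_R$ (not on unit balls) carries a factor $R^{-2}$ in front of $\|u\|_\infty$, turning the troublesome $R^{2-\epsilon}$ into the harmless $R^{-\epsilon}$, and the dominant term $R^\alpha[Ke^{2u}]_{C^\alpha(B_R)}=O(R^\alpha\ln R)$ gives the result. Your instinct that the harmonic part has small Hessian (by interior estimates with the $R^{-2}$ prefactor) is on the right track, but without the dichotomy to tame $[e^{2u}]_{C^\alpha}$, the Newtonian part cannot be controlled at the claimed rate, and invoking the holomorphic quantity $u_{zz}-u_z^2$ at this stage is not needed and does not by itself supply the missing estimate.
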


It turns out that the area growth estimate is related to the Nevanlinna's theory in a weaker form. For a meromorphic function $f$, its spherical derivative is defined as 
\begin{equation}
f^{\#}(z):=\frac{2\left|f^{\prime}(z)\right|}{1+|f(z)|^2}.
\end{equation}
Let 
\begin{equation}
A(r):=\frac{1}{4\pi} \int_{|x|\leq r} (f^{\#}(z))^2 |dz|,
\end{equation}
and $T(r)=\int_{0}^r \frac{A(t)}{t} dt$. 
The order of $f$ is defined by
$$
\rho(f):=\underset{r \rightarrow \infty}{\limsup } \frac{\log T(r)}{\log r}.
$$

Nevanlinna theory describes locally univalent meromorphic functions $f$ of finite order  as $f=\frac{w_1}{w_2}$, where $w_1,w_2$ are two linearly independent solutions of 
\begin{equation}
w''(z)+P(z)w(z)=0.
\end{equation}
Here $P:=\frac{1}{2}\mathcal{S}(f)$, the Schwartz derivative of $f$, which is defined by 
\begin{equation}
\mathcal{S}( f):=\left(\frac{f^{\prime \prime}}{f^{\prime}}\right)^{\prime}-\frac{1}{2}\left(\frac{f^{\prime \prime}}{f^{\prime}}\right)^2.
\end{equation} 
Moreover, the degree of $P$ is related to the order of $f$ by $d(P)=2(\rho(f)-1)$, if $P\neq 0$.

When $f$ is a  developing function corresponding to a solution $u$, it is readily seen that $A(r)=\frac{1}{4\pi}\int_{B_r} e^{2u} dx$, which is exactly the conformal area that are concerned with. Moreover an easy computation shows that $P(z)=u_{zz}-u_z^2$.

Therefore in view of the Theorem ~\ref{thm(main). density estimate}, 
it is easy to see that $\rho(f)\leq 2-\epsilon,$ where $f$ is any developing function corresponding to the solution of \eqref{eq. EqL} that are bounded from above. Consequently, $d(P)\leq 1$ and the classification result would follow.
It is thus also reasonable to conjecture the best constant $\epsilon$ in Theorem~\ref{thm(main). density estimate} should be $\epsilon=1$.

With Theorem~\ref{thm. EGLX} in mind and together with a half-space version of the Harnack inequality, we also classify half space solutions of the Liouville equation that are bounded from above and with constant geodesic curvature on the boundary. Previously, Zhang \cite{Z} classified finite total curvature solutions of the Liouville equation on the half space.

\begin{theorem}[Classification in the half plane]\label{thm. half plane}
    Let $\kappa\in\mathbb{R}$ be a constant and let $u:\mathbb{R}^{2}_{+}\to\mathbb{R}$ be a solution which is bounded from above to
    \begin{equation}\label{eq. half plane: boundary condition}
    \left\{
  \begin{array}{ll}
    \Delta u+e^{2u}=0, & \text{in $\mathbb{R}^2_{+}$}, \\
    \frac{\partial u}{\partial x_2}(x_1,0)=-\kappa e^{u(x_1,0)}, & \text{on $x_2=0$}.
  \end{array}
\right.
    \end{equation}
 
Then we have the following classification result up to normalizations:
 either 
    \begin{equation}
        u=\ln \left( \frac{2  }{1 +x_1^2+\left(x_2+\kappa \right)^2}\right),
       \end{equation}
        or
        \begin{equation}
 u=\ln\left( \frac{2e^{x_2}}{\sqrt{\kappa^2+t^2+1}-\kappa+ 2t \cos(x_1)e^{x_2}+(\sqrt{\kappa^2+t^2+1}+\kappa)e^{2x_2}}\right), \quad \text{for some } t\geq 0,
\end{equation}
    or
        \begin{equation}
    u=\ln \left(\frac{2  e^{ x_1}}{(1+t^2)+2te^{ x_1}\cos(x_2- b)+e^{ 2x_1}}\right)
\end{equation}
with $b$ chosen such that $t\sin{b}=\kappa$.

Here a normalization of \eqref{eq. half plane: boundary condition} is of the form:
\[
u_{\lambda, a}(x_1, x_2):=u(\lambda x_1+a, \lambda x_2)+\ln \lambda \quad (\lambda>0 , a\in \mathbb{R}).
\] 
\end{theorem}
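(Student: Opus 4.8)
The plan is to carry out the developing‑map approach: show that for a solution bounded from above the quantity $P:=u_{zz}-u_z^2$ (half the Schwarzian of the developing map) is a real constant, and then match the boundary condition, reducing the problem to Theorem~\ref{thm. EGLX}.

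\textbf{Step 1: reflecting the Schwarzian.} Write $z=x_1+ix_2$; recall $P=u_{zz}-u_z^2$ is holomorphic in $\mathbb{R}^2_+$, and $u_{zz}=\tfrac14(u_{x_1x_1}-u_{x_2x_2})-\tfrac{i}{2}u_{x_1x_2}$. Along $\{x_2=0\}$ the boundary relation $u_{x_2}=-\kappa e^{u}$ gives $u_{x_1x_2}=-\kappa u_{x_1}e^{u}$, so $\operatorname{Im}u_{zz}=-\tfrac12 u_{x_1x_2}=\tfrac\kappa2 u_{x_1}e^u$, while $\operatorname{Im}(u_z^2)=-\tfrac12 u_{x_1}u_{x_2}=\tfrac\kappa2 u_{x_1}e^u$; hence $\operatorname{Im}P\equiv 0$ on the real axis. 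By standard elliptic boundary regularity for the Robin problem $u$ is (real‑analytic and in particular) smooth up to $\{x_2=0\}$, so $P$ is continuous up to and real on $\mathbb{R}$; by the Schwarz reflection principle it extends to an entire function $\widehat P$ on $\mathbb{C}$ with real Taylor coefficients.

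\textbf{Step 2: $\widehat P$ is constant.} The half‑space analogues of Theorems~\ref{thm(main). density estimate}–\ref{thm(main). derivative growth rate} (built on a half‑space version of the Harnack inequality) give, up to the boundary, $\|Du\|_{L^\infty(B_R^+)}=O(R^{1-\epsilon^2/2})$ and $\|D^2u\|_{L^\infty(B_R^+)}=O(R^{\alpha}\ln R)$ for every $\alpha>0$ (here $K\equiv 1$, so $[K]_{C^\alpha}=0$); thus $|P|\le |D^2u|+|Du|^2=O(R^{2-\epsilon^2})$ in $\mathbb{R}^2_+$, and by reflection the same polynomial bound holds on all of $\mathbb{C}$. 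Since $2-\epsilon^2<2$, $\widehat P$ is a real polynomial of degree $\le 1$. To exclude $\deg\widehat P=1$, suppose $\widehat P(z)=az+b$ with $a\ne0$ real. On whichever ray of the real axis the coefficient $ax_1+b$ is eventually positive, the two independent solutions $w_1,w_2$ of $w''+(az+b)w=0$ are, by the classical Airy asymptotics, oscillatory there with amplitude $\asymp|x_1|^{-1/4}$, so $|w_1|^2+|w_2|^2\asymp|x_1|^{-1/2}$. Since the developing map $f=w_1/w_2$ satisfies $f^{\#}=2|W|/(|w_1|^2+|w_2|^2)$ with $W$ the constant nonzero Wronskian, we get $f^{\#}(x_1)\asymp|x_1|^{1/2}\to\infty$, i.e.\ $u(x_1,0)=\ln f^{\#}(x_1)\to+\infty$ — impossible, because $u$ is bounded from above in $\mathbb{R}^2_+$ and continuous up to $\overline{\mathbb{R}^2_+}$. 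Hence $\widehat P\equiv c$ for some $c\in\mathbb{R}$.

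\textbf{Step 3: reduction to Theorem~\ref{thm. EGLX} and matching the boundary.} With $\mathcal S(f)=2c$ constant, $f$ is a Möbius transformation of $e^{\mu z}$ where $\mu^2=-c$; since $c$ is real, $\mu$ is real (if $c<0$), purely imaginary (if $c>0$), or $f$ is Möbius ($c=0$). These formulas define $f$ on all of $\mathbb{C}$ as a locally univalent meromorphic function, and a direct computation (the denominator $|w_1|^2+|w_2|^2$ is positive everywhere and tends to $\infty$ as $\operatorname{Re}(\mu z)\to\pm\infty$, hence is bounded below) shows $f^{\#}$ is globally bounded; so $\widetilde u:=\ln\big(2|f'|/(1+|f|^2)\big)$ is an entire Liouville solution that is bounded from above and restricts to $u$ on $\mathbb{R}^2_+$. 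By Theorem~\ref{thm. EGLX}, $\widetilde u$ is, up to a full‑plane normalization, the Chen–Li solution (when $c=0$) or some $u_t$ (when $c\neq0$). Substituting the corresponding normalized form of $\widetilde u$ and imposing $\partial_{x_2}u(x_1,0)=-\kappa e^{u(x_1,0)}$: for Chen–Li this forces the vertical translation by $\kappa$, yielding $u=\ln\big(2/(1+x_1^2+(x_2+\kappa)^2)\big)$; for $u_t$, the constraint that $\mathcal S(f)=c$ be real forces the normalization parameter $\lambda$ to be real or purely imaginary, and these two options correspond (the sign of $c$ is invariant under the admissible $\lambda>0$ normalizations) to the $e^{x_1}$‑oriented case — where one obtains the third family with $b$ fixed by the one‑line identity $\partial_{x_2}u|_{x_2=0}=-t\sin b\,e^{u}$, i.e.\ $t\sin b=\kappa$ — and the $e^{x_2}$‑oriented case, where imposing the boundary condition gives the coefficient relations $A-C=2\kappa$ and $AC-B^2=1$ (the latter from normalizing the Möbius matrix to determinant one), solved by $A=\sqrt{\kappa^2+t^2+1}+\kappa$, $B=t$, $C=\sqrt{\kappa^2+t^2+1}-\kappa$, i.e.\ the second family.

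I expect the main obstacle to be Step~2, and precisely the exclusion of $\deg\widehat P=1$: this is the point where the upper bound on $u$ must be converted, via the asymptotics of solutions of a complex linear ODE with linear potential and the Wronskian identity $f^{\#}=2|W|/(|w_1|^2+|w_2|^2)$, into a contradiction on the boundary, and where the half‑space elliptic estimates (needed to make $P$ polynomial at all) genuinely enter; the remaining steps are routine ODE analysis and bookkeeping with the normalizations.
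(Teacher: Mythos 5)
Your overall strategy follows the paper for Step~1 (showing $P=u_{zz}-u_z^2$ is real on $\{x_2=0\}$, and extending to an entire function $\widehat P$ by Schwarz reflection, with the half-space elliptic estimates forcing $\widehat P$ to be a real affine polynomial), but your Steps~2 and~3 take a genuinely different route. To exclude $\deg\widehat P=1$, the paper runs a winding-number argument on $u_z^2 = u_{zz}-az-b$ along a large upper semicircle: it uses the boundary condition to show $u_z$ is ``almost real'' on $\mathbb{R}$ and $|u_z|\gtrsim\sqrt{R}$ there, so $\arg u_z$ changes by approximately $k\pi$, hence $\arg u_z^2$ by $2k\pi$, whereas the left side contributes $\pi+o(1)$. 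You instead invoke the developing map $f=w_1/w_2$, the identity $e^{u}=f^\#=2|W|/(|w_1|^2+|w_2|^2)$, and classical Airy/WKB decay $|w_j(x_1)|=O(|x_1|^{-1/4})$ on the ray where $ax_1+b>0$, forcing $u(x_1,0)\to+\infty$ and contradicting boundedness. Both are valid; yours exploits the upper bound on $u$ at the boundary through the Liouville representation, while the paper's exploits the smallness of $\operatorname{Im}u_z$ coming from the Robin condition and stays entirely at the PDE level with no use of the developing map.

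In Step~3 the divergence persists: the paper, after normalizing $C_0\in\{0,\pm\tfrac14\}$, passes to $v=e^{-u}$ and solves the first-order system $\partial_{x_1x_2}v=0$, $\partial_{x_1x_1}v-\partial_{x_2x_2}v=-4C_0v$ directly on $\mathbb{R}^2_+$ by separation of variables, then imposes $\partial_{x_2}v|_{x_2=0}=\kappa$. You instead extend $u$ through its developing map to an entire Liouville solution $\widetilde u$ bounded from above, invoke Theorem~\ref{thm. EGLX}, and then track which full-plane normalizations are compatible with reality of $\mathcal S(f)$ and with the boundary condition. That reduction is elegant and it buys you reuse of the full-plane classification; the cost is that you must justify boundedness of $\widetilde u$ on all of $\mathbb{C}$, and your one-line reason (``the denominator is positive and $\to\infty$ as $\operatorname{Re}(\mu z)\to\pm\infty$'') is slightly too quick: on the critical line $\operatorname{Re}(\mu z)=0$ the denominator does not tend to infinity, and one must use periodicity there (or $|z|\to\infty$ growth when $c=0$) to get the uniform positive lower bound. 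This is easily repaired, and the rest of your normalization bookkeeping is sound (e.g.\ your check $t\sin b=\kappa$ for the $e^{x_1}$-oriented family matches the paper's third case). So: correct, but genuinely different in the two decisive steps, with your version more complex-analytic (developing map, Airy asymptotics) and the paper's more elementary and purely PDE/topological.
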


The developing function of the second family of solutions in Theorem \ref{thm. half plane} is 
 $f(z)=\left(\sqrt{\kappa^2+t^2+1}-\kappa\right)e^{iz}+t$.
This map describes the universal cover of the geodesic ball $B_p\left(2\arctan a\right)$ on $\mathbb{S}^2$ with a puncture $q$ satisfying $dist(p,q)=2\arctan (t_0 a)$, where $a=\sqrt{\kappa^2+1}-\kappa\in (0,+\infty), t_0=\frac{1}{t}(\sqrt{\kappa^2+t^2+1}-\sqrt{\kappa^2+1})\in(0,1)$. The developing function of the third family of solutions in Theorem \ref{thm. half plane} is $f(z)=e^{z-ib}+t$, and the corresponding manifold $(\mathbb{R}^2_+,e^{2u}g_0)$ is a cut off of the manifold $(\mathbb{R}^2,e^{2u_t}g_0)$ appeared in Theorem \ref{thm. EGLX}.

We also would like to mention that G\'alvez and Mira \cite{GM} have classified all solutions of \eqref{eq. half plane: boundary condition} in terms of the Liouville formula with a precise behavior of the developing function on the boundary.

The organization of the paper is as follows. In Section 2,  we present key technical tools to prove the area growth estimate, the Theorem~\ref{thm(main). density estimate}. In Section 3, we appeal to Schauder estimates to yield growth estimates of $u$ itself and present a new proof of Theorem~\ref{thm. EGLX}. In section 4, we discuss the classification results on the half plane.

\section{Technical tools and the area growth estimate}\label{Section. essential tools}

In this section, we present essential technical tools that are needed and then prove Theorem~\ref{thm(main). density estimate}.

For clarity, we make a list of the notations we use.
\begin{itemize}
    \item $\mathbb{R}^{2}$: the standard $2$-dimensional Euclidean space.
    \item $B_{R}(x)$: the standard Euclidean disc in $\mathbb{R}^{2}$, with radius $R$ and centered at $x\in\mathbb{R}^{2}$.
    \item $B_{R}$: a short hand for $B_{R}(0)$.
    \item $2\mathbb{Z}^{+}+1$: The set of odd integers no less than $3$, equivalent to $\{3,5,7,9,\cdots\}$.
    \item $Q_{R}(x):=x+[-\frac{R}{2},\frac{R}{2}]^{2}$: a square centered at $x$ with side length $R$.
    \item $Q_{R}$: a short hand for $=Q_{R}(0)$.
    \item $\mathcal{D}_{u}(\Omega):=\frac{1}{|\Omega|}\int_{\Omega}e^{2u}dx$:  area density of a function. 
    \item $\delta_{u}(R)=\sup_{x\in\mathbb{R}^{2}}\mathcal{D}_{u}(Q_{R}(x))$: global density of $u$ at squares of side length $R$.
\end{itemize}

\subsection{A "self-similar" Harnack principle}
Let's now state a version of Harnack principle we need in this paper. It has its own independent interest.

It is known that in $\mathbb{R}^{2}$, the fundamental solution can be given by
\begin{equation}
    \Gamma_{R}(x)=\frac{1}{2\pi}\ln{\frac{|x|}{R}},\quad\Delta\Gamma_{R}=\delta_{0}.
\end{equation}
We follow Caffarelli's explanation of the mean value property and set
\begin{equation}\label{kernel for MVP}
    \gamma_{R}(x)=\frac{|x|^{2}-R^{2}}{4\pi R^{2}}-\Gamma_{R}(x).
\end{equation}
One can verify that for $x\in B_{R/2}$, we have
\begin{equation}\label{singular integration rate}
    \gamma_{R}(x)\sim\ln{\frac{R}{|x|}}.
\end{equation}
Besides,
\begin{equation}
    \Delta\gamma_{R}=\frac{1}{\pi R^{2}}-\delta_{0}\mbox{ in }B_{R},\quad\nabla\gamma_{R}=0\mbox{ on }\partial B_{R}.
\end{equation}
Let $w$ be a function defined in $B_{R}$, then we integrate-by-parts and obtain
\begin{equation}\label{mean value property}
    \mathop{avg}\limits_{B_{R}}w-w(0)=\int_{B_{R}}w\Delta\gamma_{R}=\int_{B_{R}}\gamma_{R}\Delta w.
\end{equation}

We now develop a Harnack estimate. You can search for the definition of $Q_{N^{k}}(x)$ in the notation.
\begin{lemma}\label{lem. self similar harnack}
    Let $N\in2\mathbb{Z}^{+}+1$ and $\sigma\in(2/N^{2},1)$. Let $k\geq0$ be an arbitrary integer. Assume that $w:\mathbb{R}^{2}\to[0,\infty)$ satisfies the equation $\Delta w=f$ and $f$ satisfies the following two conditions:
    \begin{itemize}
        \item[(1)] $|f|\leq1$ everywhere in $\mathbb{R}^{2}$;
        \item[(2)] For any $x\in\mathbb{R}^{2}$ and any $i\in\{0,1,\cdots,k\}$,
        \begin{equation}
            \frac{1}{N^{2i}}\int_{Q_{N^{i}}(x)}|f|dx\leq\sigma^{i}.
        \end{equation}
    \end{itemize}
    Then there exists a universal constant $C$ (independent of $(N,\sigma,k)$), such that for any $x\in\mathbb{R}^{2}$ and $y\in Q_{N^{k}}(x)$, we have
    \begin{equation}\label{eq. Harnack formula}
        w(y)\leq C\Big(w(x)+N^{2k}\sigma^{k}\ln{N}\Big).
    \end{equation}
\end{lemma}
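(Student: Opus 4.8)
The plan is to exploit the mean value representation \eqref{mean value property} iteratively across the dyadic-like scales $N^0, N^1, \dots, N^k$. The key point is that nonnegativity of $w$ turns the mean value identity into a one-sided comparison: for any ball $B_r(z)$ contained in $\mathbb{R}^2$,
\begin{equation}
w(z) = \mathop{avg}\limits_{B_r(z)} w - \int_{B_r(z)} \gamma_r(\cdot - z)\, \Delta w \geq \mathop{avg}\limits_{B_r(z)} w - \int_{B_r(z)} \gamma_r(\cdot - z)\, |f|,
\end{equation}
using that $\gamma_r \geq 0$ on $B_r$ (which follows from \eqref{singular integration rate} on $B_{r/2}$ together with the explicit formula \eqref{kernel for MVP} on the remaining annulus). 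Rearranged, this says $\mathop{avg}_{B_r(z)} w \leq w(z) + \int_{B_r(z)} \gamma_r |f|$, i.e. the average of $w$ over a ball is controlled by the value at the center plus an error governed by condition (2). First I would establish this one-sided inequality cleanly, together with the bound $\int_{B_r(z)} \gamma_r(\cdot-z)\,|f|\,dx \leq C \big(\sup_{Q_r(z)}\text{-type average of }|f|\big)\, r^2 \ln(\text{something})$; more precisely, splitting $B_r$ into dyadic annuli $\{|x-z|\sim 2^{-j} r\}$ and using $\gamma_r \sim \ln(r/|x-z|) \lesssim j$ on the $j$-th annulus, the integral is dominated by $\sum_j j \cdot 2^{-2j} r^2 \cdot (\text{density of } |f| \text{ at that scale})$, and since the relevant scales are $\lesssim r$ the hypothesis (2) at scale $r = N^i$ gives a bound of the shape $C\, N^{2i} \sigma^i \ln N$.

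The second step is the iteration. Fix $x$ and $y \in Q_{N^k}(x)$. I would produce a chain of points $x = z_0, z_1, \dots, z_{m} $ with $z_m = y$ (or close enough), where consecutive points are at distance comparable to the relevant scale, so that $y$ lies in a ball of radius $\sim N^k$ around $x$. The mechanism: from the averaged inequality at scale $N^k$ around $x$ we learn that $w$ has controlled average on $B_{cN^k}(x) \supset Q_{N^k}(x)$; but an average bound is not yet a pointwise bound at $y$. To upgrade, I would run the estimate in the reverse direction — apply the one-sided inequality at successively smaller scales $N^k, N^{k-1}, \dots, N^0$ centered near $y$: at scale $N^i$, knowing the average of $w$ on a ball of radius $\sim N^{i+1}$ is bounded lets one bound the average on a sub-ball of radius $\sim N^i$, picking up an additive error $C N^{2i}\sigma^i \ln N$ at each step; since $\sigma < 1$ the geometric-type series $\sum_{i=0}^{k} N^{2i}\sigma^i$ is dominated by its last term $N^{2k}\sigma^k$ (here the condition $\sigma > 2/N^2$ ensures $N^{2}\sigma > 2 > 1$, so the series is genuinely geometrically increasing and the top term dominates, giving the clean $N^{2k}\sigma^k$ rather than a $k$-dependent sum). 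Telescoping down to scale $N^0 = 1$ and finally using interior elliptic estimates (or once more the mean value inequality on a unit ball, together with $|f| \leq 1$ from condition (1)) to pass from the average of $w$ on $B_1(y)$ to the value $w(y)$ yields \eqref{eq. Harnack formula}.

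I would organize the write-up as: (i) a sub-lemma recording $\gamma_r \geq 0$ and the singular-integral estimate $\int_{B_r} \gamma_r |f| \leq C N^{2i}\sigma^i \ln N$ when $r = N^i$ and (2) holds; (ii) the one-sided mean value inequality from nonnegativity; (iii) the downward iteration over scales with the geometric summation. The main obstacle I anticipate is step (ii)–(iii) bookkeeping: going from control of \emph{averages} of $w$ to control of the \emph{pointwise value} $w(y)$ requires choosing the chain of balls and scales so that each ball at scale $N^i$ is contained in the ball at scale $N^{i+1}$ on which we already have an average bound, while also covering the displacement from $x$ to $y$ — this is exactly where $N$ being an odd integer $\geq 3$ enters, since it makes $Q_{N^{i+1}}$ decompose into $N^2$ translated copies of $Q_{N^i}$ with one centered copy, letting the chain step from any sub-square to the center in a controlled number of moves. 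Keeping the constant $C$ independent of $(N,\sigma,k)$ forces care that none of the per-scale estimates hide an $N$- or $k$-dependent factor beyond the explicit $\ln N$ and $N^{2k}\sigma^k$.
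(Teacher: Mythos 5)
Your steps (i)--(ii) reproduce the paper's mechanism exactly: the nonnegative kernel $\gamma_R$ from \eqref{kernel for MVP}, the two-sided bound $|\mathop{avg}_{B_R(x)}w-w(x)|\leq\int_{B_R}\gamma_R|f|$, and the estimate of that kernel integral at scale $R\sim N^k$ by stratifying the domain and invoking hypothesis (2), with $N^2\sigma>2$ forcing the resulting series $\sum_i(k-i+2)N^{2i}\sigma^i$ to be dominated by its top term. (The paper stratifies into $N$-adic square annuli $Q_{N^i}\setminus Q_{N^{i-1}}$, which lines up with the $N^i$-scale structure of hypothesis (2) directly; your dyadic annuli would need an extra covering step to compare against the $N$-adic hypothesis, but lead to the same bound.)

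The part that needs repair is step (iii). Once the kernel integral estimate holds at the single top scale $R\sim N^k$ --- which is precisely your sub-lemma (i) with $i=k$ --- there is no telescoping through $N^{k-1},\dots,N^0$: the paper's whole claim is $|\mathop{avg}_{B_R(x)}w-w(x)|\leq C_1 N^{2k}\sigma^k\ln N$ for $R\in[N^k/\sqrt2,\sqrt2 N^k]$, and the lemma then closes in one pass via $w(y)\leq\mathop{avg}_{B_{N^k/\sqrt2}(y)}w+C_1N^{2k}\sigma^k\ln N\leq 4\,\mathop{avg}_{B_{\sqrt2 N^k}(x)}w+C_1N^{2k}\sigma^k\ln N\leq 4w(x)+5C_1N^{2k}\sigma^k\ln N$, using $y\in Q_{N^k}(x)\Rightarrow B_{N^k/\sqrt2}(y)\subset B_{\sqrt2 N^k}(x)$ and $w\geq 0$. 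Your proposed downward iteration is either redundant (if the center stays near $y$ throughout, each scale step is an independent mean value comparison and you could have stopped at the top scale) or fails (if the center is shifted across scales to absorb the $x\to y$ displacement, then controlling the average on a sub-ball $B_{N^i}$ by the average on a containing ball $B_{N^{i+1}}$ via $w\geq 0$ costs a multiplicative area factor $\sim N^2$ per step, and $k$ steps produce $N^{2k}$, ruining the additive error structure). Drop the telescope and the final unit-scale elliptic estimate; the top-scale claim plus one doubling comparison is the entire argument.
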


Before proving Lemma~\ref{lem. self similar harnack}, let's have a brief digestion. From the assumptions of $f$, we know $f\in L^{\infty}(\mathbb{R}^{2})\cap L^{1}_{loc}$. Since $L^{1}_{loc}$ space is a critical space in $\mathbb{R}^{2}$, we don't have the standard Harnack inequality, but rather will get a logarithmic term of $\|f\|_{L^{1}}$ on the right-hand-side, for example
\begin{equation}
    w(y)\leq C\Big(w(x)+k\cdot N^{2k}\sigma^{k}\ln{N}\Big).
\end{equation}
The reason why we don't see $\log\big(\|f\|_{L^{1}}\big)\sim k\ln{N}$ in \eqref{eq. Harnack formula} is that the assumption on $f$ has a certain sense of "self-similarity" similar to a Cantor set.

Having seen how Lemma~\ref{lem. self similar harnack} differs from the standard Harnack principle for equations with right-hand-sides of critical integrability, we can now work on its proof. Throughout the proof, all constants like $C_{1}-C_{4}$ are universal.
\begin{proof}[Proof of Lemma~\ref{lem. self similar harnack}]
    The idea is to use the mean value property \eqref{mean value property} and obtain the following claim.
    \begin{itemize}
        \item Claim: There exists a constant $C_{1}$, such that for any $x\in\mathbb{R}^{2}$ and any $R\in[N^{k}/\sqrt{2},\sqrt{2}N^{k}]$, we have
        \begin{equation}
            |\mathop{avg}\limits_{B_{R}(x)}w-w(x)|\leq C_{1}N^{2k}\sigma^{k}\ln{N}.
        \end{equation}
    \end{itemize}
    After that it suffices to compare $\mathop{avg}\limits_{B_{\sqrt{2}N^{k}}(x)}w$ with $\mathop{avg}\limits_{B_{N^{k}/\sqrt{2}}(y)}w$ and obtain \eqref{eq. Harnack formula}.

    Now let's prove the claim, and for simplicity assume $x=0$. By \eqref{mean value property},
    \begin{equation}
        |\mathop{avg}\limits_{B_{R}}w-w(0)|=|\int_{B_{R}}\gamma_{R}\Delta w|\leq\int_{Q_{3N^{k}}}(\gamma_{R})^{+}|f|,
    \end{equation}
    where $(\gamma_{R})^{+}:=\max\{\gamma_{R},0\}$. By \eqref{singular integration rate} we have
    \begin{equation}
        \gamma_{R}(x)\leq C_{2}(k-i+1)\ln{N}
    \end{equation}
    outside $Q_{N^{i-1}}$. By the assumptions of $f$, we have
    \begin{align}
        \int_{Q_{3N^{k}}}(\gamma_{R})^{+}|f|=&\Big\{\sum_{i=1}^{k-1}\int_{Q_{N^{i}}\setminus Q_{N^{i-1}}}+\int_{Q_{3N^{k}}\setminus Q_{N^{k-1}}}+\int_{Q_{1}}\Big\}(\gamma_{R})^{+}|f|\\
        \leq&C_{3}\Big\{\sum_{i=1}^{k}(k-i+2)N^{2i}\sigma^{i}\ln{N}+\int_{Q_{1}}(k\ln{N}+\ln{\frac{1}{|x|}})\Big\}\\
        \leq&C_{4}N^{2k}\sigma^{k}\ln{N},
    \end{align}
    where we use $N^{2}\sigma\geq2$ in the last step. This finishes the proof of Lemma~\ref{lem. self similar harnack}.
\end{proof}
\begin{remark}\label{rmk. self similar harnack domain}
    In fact, we just need to ensure that $w$ is defined inside $Q_{3N^{k}}(x)$ instead of in the whole $\mathbb{R}^{2}$.
\end{remark}
\begin{remark}
    The assumption that $N$ is an odd integer is purely given for computation convenience. In fact, when integrating inside the region $Q_{N^{i}}\setminus Q_{N^{i-1}}$, we simply divide it into $N^{2}-1$ sub-squares, each of which has side length $N^{i-1}$, and add up the integration in each sub-square.
\end{remark}
\subsection{A maximum principle}
Reversing the inequality in assumption (2) of Lemma~\ref{lem. self similar harnack}, one could derive a quantitative version of maximum principle which is useful in our subsequent contradiction argument. 
\begin{lemma}\label{lem. maximum principle}
    Let $N\in2\mathbb{Z}^{+}+1$ and $\sigma\in(2/N^{2},1)$. Let $k\geq0$ be an arbitrary integer. Assume that $w:\mathbb{R}^{2}\to[0,\infty)$ satisfies  $\Delta w\geq f$ and $f$ satisfies the following two conditions:
    \begin{itemize}
        \item[(1)] $0\leq f\leq1$ everywhere in $\mathbb{R}^{2}$;
        \item[(2)] For any $x\in\mathbb{R}^{2}$ and any $i\in\{0,1,\cdots,k\}$,
        \begin{equation}
            \frac{1}{N^{2i}}\int_{Q_{N^{i}}(x)}f dx\geq \frac{\sigma^{i}}{2}.
        \end{equation}
    \end{itemize}
    Then there exists a universal constant $C$ (independent of $(N,\sigma,k)$), such that for any $x\in\mathbb{R}^{2}$, we have
    \begin{equation}\label{eq. maximum principle formula}
        w(x)\leq  \max_{\partial Q_{N^{k+1}}(x)}w-
        C\sigma^k N^{2k+2}.
    \end{equation}
\end{lemma}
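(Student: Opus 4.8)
The plan is a quantitative subsolution comparison on the ball $B:=B_{N^{k+1}/2}(0)$ inscribed in $Q_{N^{k+1}}(0)$; by translation invariance we may assume $x=0$, and we write $R:=N^{k+1}/2$.

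First I would solve $\Delta v=f$ in $B$ with $v=0$ on $\partial B$. Assumption (1) gives $f\in L^{\infty}(B)\subset L^{1}(B)$, so $v$ is well defined and continuous on $\overline{B}$, and since $f\ge 0$ the maximum principle yields $v\le 0$ in $B$. As $\Delta(w-v)=\Delta w-f\ge 0$, the function $w-v$ is subharmonic, hence $w(0)-v(0)\le\max_{\partial B}(w-v)=\max_{\partial B}w$; and since $\Delta w\ge f\ge 0$ makes $w$ subharmonic on all of $\mathbb{R}^{2}$ while $\partial B\subset\overline{Q_{N^{k+1}}(0)}$, we also get $\max_{\partial B}w\le\max_{\partial Q_{N^{k+1}}(0)}w$. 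It therefore remains to prove the one-sided pointwise bound $v(0)\le -C\sigma^{k}N^{2k+2}$ with $C$ universal.

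For this I would invoke the Green representation on $B$: with $\Gamma_{R}(y)=\frac{1}{2\pi}\ln\frac{|y|}{R}$ (the fundamental solution introduced above, which vanishes on $\partial B$) one has $v(0)=\int_{B}\Gamma_{R}(y)f(y)\,dy=-\frac{1}{2\pi}\int_{B}\ln\frac{R}{|y|}\,f(y)\,dy$, with nonnegative integrand. Writing $\ln\frac{R}{|y|}=\int_{|y|}^{R}\frac{dr}{r}$ and applying Fubini,
\[
\int_{B}\ln\frac{R}{|y|}\,f(y)\,dy=\int_{0}^{R}\frac{1}{r}\Big(\int_{B_{r}(0)}f\Big)\,dr\ \ge\ \int_{N^{k}}^{R}\frac{1}{r}\Big(\int_{B_{r}(0)}f\Big)\,dr .
\]
For $r\in[N^{k},R]$ we have $B_{r}(0)\supset Q_{\sqrt{2}\,r}(0)$, which can be tiled by $\lfloor\sqrt{2}\,r/N^{k}\rfloor^{2}\ge(\sqrt{2}-1)^{2}r^{2}/N^{2k}$ pairwise disjoint translates of $Q_{N^{k}}$; applying assumption (2) with $i=k$ on each translate gives $\int_{B_{r}(0)}f\ge\frac{(\sqrt{2}-1)^{2}}{2}\sigma^{k}r^{2}$. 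Consequently the last integral is at least $\frac{(\sqrt{2}-1)^{2}}{2}\sigma^{k}\int_{N^{k}}^{R}r\,dr=\frac{(\sqrt{2}-1)^{2}}{4}\sigma^{k}(R^{2}-N^{2k})$, and since $R=N^{k+1}/2$ and $N\ge 3$ this is a universal positive multiple of $\sigma^{k}N^{2k+2}$. Feeding the bound back into the Green representation yields $v(0)\le-C\sigma^{k}N^{2k+2}$, which completes the proof.

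The one delicate point is the penultimate estimate. The naive move of keeping only the region $\{|y|\le R/2\}$, where $-\Gamma_{R}$ is of unit size, would lose a factor $N^{2}$: that region only carries mass of order $\sigma^{k}N^{2k}$, giving the weaker bound $\sigma^{k}N^{2k}\ln N$. What saves the argument is that the logarithmically small kernel $\ln\frac{R}{|y|}$ is integrated against $\int_{B_{r}(0)}f$ over the full range of radii up to $\sim N^{k+1}$, where assumption (2) still forces $\int_{B_{r}(0)}f\gtrsim\sigma^{k}r^{2}$; the smallness of the kernel and the growth of the mass balance exactly and reproduce the missing $N^{2}$. Everything else is the routine subsolution/maximum-principle comparison, and one checks in passing that all constants are absolute, in particular independent of $(N,\sigma,k)$.
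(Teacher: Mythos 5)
Your proof is correct, and it reaches the same estimate by a genuinely different route from the paper. The paper works directly with the Dirichlet Green function $G_{0}$ on the square $Q_{N^{k+1}}(x)$: it bounds $G_{0}(y)\leq\frac{1}{2\pi}\ln\frac{2|y|}{N^{k+1}}$ by the maximum principle, decomposes $Q_{N^{k+1}}$ into the square annuli $Q_{(2t+1)N^{k}}\setminus Q_{(2t-1)N^{k}}$, applies assumption (2) with $i=k$ on each, and then evaluates the resulting alternating-sign sum $\sum_{t}t\ln\frac{\sqrt{2}(2t+1)}{N}\lesssim-N^{2}$. You instead pass to the inscribed ball $B=B_{N^{k+1}/2}(x)$, introduce the auxiliary Newtonian potential $v$ with $\Delta v=f$, $v|_{\partial B}=0$, split $w=(w-v)+v$ so that the subharmonicity of $w-v$ and of $w$ handle the boundary comparison, and then use the explicit radial Green function together with a Fubini interchange $\int_{B}\ln\frac{R}{|y|}f=\int_{0}^{R}\frac{1}{r}\bigl(\int_{B_{r}}f\bigr)\,dr$. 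This trades the paper's discrete annular sum for a one-dimensional integral in $r$, where the quadratic growth of the cumulative mass $\int_{B_{r}}f\gtrsim\sigma^{k}r^{2}$ (again from assumption (2) at scale $i=k$ only) is transparent and immediately cancels the $1/r$ from the kernel, giving $\int_{N^{k}}^{R}r\,dr\sim N^{2k+2}$. Both arguments isolate exactly the same mechanism — the log-small kernel against the evenly distributed mass — but your version avoids having to estimate the square's Green function or track the constant in the annular summation, so it is somewhat more transparent; the paper's version, on the other hand, stays entirely in the square geometry that the rest of Section 2 is built around, which keeps the notation uniform with Lemma~\ref{lem. self similar harnack}.
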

\begin{proof}
    The proof follows the same strategy as that of Lemma~\ref{lem. self similar harnack} with a different stratification of $Q_{N^{k+1}(x)}.$

    Without loss of generality, we assume $x=0$. Green's formula implies that
    \begin{align*}
        w(0)&=\int_{Q_{N^{k+1}}} G_0(y)\Delta w(y)dy
        +\int_{\partial Q_{N^{k+1}}}\frac{\partial G_0}{\partial n}w(y)d S_y\\
        &\leq \int_{Q_{N^{k+1}}} G_0(y)f(y)dy+\max_{\partial Q_{N^{k+1}}} w,
    \end{align*}
    where $G_0$ is the Green function centered at $0$ and supported in $Q_{N^{k+1}}$. 

    By definition, $G_0(y)=\frac{1}{2\pi}\ln |y|+h(y)$, where $h$ is a harmonic function on $Q_{N^{k+1}}$ with boundary value $h(y)=-\frac{1}{2\pi}\ln |y|$ on $\partial Q_{N^{k+1}}$. The maximum principle implies that
    \begin{align*}
        G_0(y)\leq \frac{1}{2\pi}\ln \frac{2|y|}{N^{k+1}},\quad \forall y\in Q_{N^{k+1}}.
    \end{align*}
    It follows that 
    \begin{equation}\label{ineq.G_0}
        G_0(y)\leq \frac{1}{2\pi}\ln \frac{\sqrt{2}(2t+1)}{N},\quad \forall y\in Q_{(2t+1)N^k}\setminus Q_{(2t-1)N^k},\ \forall t=0,\cdots,\frac{N-1}{2}.
    \end{equation}

     Fix a positive integer $s=[\frac{\sqrt{2}}{4}N-6]$. By the assumption on $f$ and \eqref{ineq.G_0}, we get
     \begin{align*}
         w(0)-\max_{\partial Q_{N^{k+1}}}w
         &\leq \int_{Q_{N^{k+1}}} G_0(y)f(y)dy\\
         &\leq \sum_{t=1}^{s}\int_{Q_{(2t+1)N^k}\setminus Q_{(2t-1)N^k}} G_0(y) f(y) dy\\
         &\leq \sum_{t=1}^s\frac{2}{\pi}\left(
         t\ln\frac{\sqrt{2}(2t+1)}{N}
         \right)
         \sigma^k N^{2k}.
     \end{align*}
    A simple calculation tells that
    \begin{align*}
        \sum_{t=1}^s\left(
        t\ln\frac{\sqrt{2}(2t+1)}{N}
        \right)
        \leq -\frac{1}{8}(s+1)^2+\text{lower order terms}.
    \end{align*}
    Plug this into the above estimate and the proof is finished.
\end{proof}

\subsection{Proof of Theorem~\ref{thm(main). density estimate}}
In this section we prove Theorem~\ref{thm(main). density estimate}. For clarity, we restate it here using the notations $\delta_{u}(R)$ defined at the beginning of Section~\ref{Section. essential tools}.

\begin{theorem}\label{thm. density estimate}
    Let $\lambda\leq K\leq \Lambda$ be a measurable function where $\lambda>0$, and let $u\leq M$ be a global solution of \eqref{eq. EqL variable curvature}. Then there exist constants $\epsilon,C>0$ depending on $(M,\lambda,\Lambda)$, such that for any $R\geq1$,
    \begin{equation}
        \delta_{u}(R)\leq C R^{-\epsilon}.
    \end{equation}
\end{theorem}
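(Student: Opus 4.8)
The plan is to run a contradiction/iteration argument that exploits the scaling structure of \eqref{eq. EqL variable curvature} together with the two lemmas just proved. Observe first that the "self-similar" hypothesis (2) in Lemma~\ref{lem. self similar harnack} is precisely a statement about the density quantities $\delta_u(N^i)$: since $\Delta(-u)=Ke^{2u}\leq \Lambda e^{2M}$, after rescaling $w:=-u/(\Lambda e^{2M})+\text{const}$ and $f:=Ke^{2u}/(\Lambda e^{2M})$ we get $0\le f\le 1$, and the integral condition $\frac{1}{N^{2i}}\int_{Q_{N^i}(x)}f\,dx\le\sigma^i$ amounts to $\delta_u(N^i)\le C\sigma^i$. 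So the mechanism I want is: \emph{if $\delta_u(N^i)$ decays geometrically up to scale $k$, then the Harnack inequality \eqref{eq. Harnack formula} controls the oscillation of $u$ on $Q_{N^k}$, which in turn forces $\delta_u$ to keep decaying at the next scale}. Making this self-improvement precise and iterable is the heart of the argument.

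Concretely, I would fix $N\in 2\mathbb{Z}^+ + 1$ large and a target ratio $\sigma\in(2/N^2,1)$ to be chosen, and set up the induction on $k$: suppose $\delta_u(N^i)\le \sigma^i\delta_0$ for all $i\le k$ (base case $k=0$ is trivial since $\delta_u(1)\le e^{2M}$). Apply Lemma~\ref{lem. self similar harnack} to $w=-u+M\ge 0$ (after the normalization above and possibly shrinking the constant) to conclude that for any $x$ and $y\in Q_{N^k}(x)$, $-u(y)+M\le C(-u(x)+M + N^{2k}\sigma^k\ln N)$. Taking $x$ to be a near-minimum point of $-u$ on a slightly larger square, or more usefully averaging, this gives an upper bound on $\sup_{Q_{N^k}}(-u)$ in terms of $\fint_{Q_{N^k}} (-u)$ plus the error $N^{2k}\sigma^k\ln N$. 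Now here is the key input I need to close the loop: a Jensen-type lower bound relating $\fint e^{2u}$ to $e^{2\fint u}$ will go the \emph{wrong} direction, so instead I use the \emph{equation itself} — integrating $\Delta u = -Ke^{2u}$ over $Q_{N^{k+1}}$ and using $\lambda>0$ shows $\int_{Q_{N^{k+1}}} e^{2u}\le \lambda^{-1}\int_{\partial Q_{N^{k+1}}}|\partial_n u|$, and the gradient on the boundary is controlled by the Harnack/oscillation bound just obtained (via interior gradient estimates for $-u$, which is subharmonic-from-below in a quantitative sense: $\|\nabla u\|_\infty$ on a square of size $\rho$ is bounded by $\rho^{-1}\mathrm{osc} + \rho\|Ke^{2u}\|_\infty$). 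This yields $\delta_u(N^{k+1})\le C N^{-2}(\text{osc bound}) \le \sigma^{k+1}\delta_0$ provided $N$ is chosen large and $\sigma$ close enough to (but above) $2/N^2$; the $N^{2k}\sigma^k$ factors balance against the $N^{2(k+1)}$ normalization of $\delta_u$. One can equally well substitute the maximum-principle estimate of Lemma~\ref{lem. maximum principle} at this step: if the density failed to decay, $f$ would satisfy the reversed condition (2), and \eqref{eq. maximum principle formula} would force $u(x)\to-\infty$ at a rate contradicting $\Delta u+Ke^{2u}=0$ near an interior maximum of $u$ — I would keep both routes in mind and use whichever gives cleaner constants.

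With the induction established, $\delta_u(N^k)\le \sigma^k\delta_0$ for all $k$, and interpolating over general $R\in[N^k,N^{k+1}]$ gives $\delta_u(R)\le C R^{-\epsilon}$ with $\epsilon = -\log\sigma/\log N>0$; unwinding $\delta_u(R)=\frac{1}{R^2}\sup_x\int_{Q_R(x)}e^{2u}$ and comparing balls with squares yields $\int_{B_R}e^{2u}\,dx = O(R^{2-\epsilon})$, which is Theorem~\ref{thm(main). density estimate}/Theorem~\ref{thm. density estimate}. The main obstacle I anticipate is \textbf{not} the soft structure but the bookkeeping of constants in the inductive step: I must verify that the universal $C$ from Lemma~\ref{lem. self similar harnack}, the gradient-estimate constant, and the factor $\lambda^{-1}$ can all be absorbed by choosing $N$ large \emph{first} and then $\sigma$ — crucially the Harnack constant $C$ is independent of $(N,\sigma,k)$, which is exactly what makes this possible, so the delicate point is confirming that the error term $N^{2k}\sigma^k\ln N$ really is $o(\sigma^{k+1}N^{2(k+1)})$ after the boundary-gradient conversion, i.e. that $\ln N = o(N^2\sigma)$, automatic since $\sigma>2/N^2$ gives $N^2\sigma>2$. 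A secondary subtlety is that $u$ is only assumed bounded above, not below, so all estimates must be phrased for $w=M-u\ge 0$ and one must never divide by anything that could blow up; the pinching $K\ge\lambda>0$ is what prevents $e^{2u}$ from being too small on large sets and is used essentially in the equation-integration step.
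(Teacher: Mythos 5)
Your high-level scaffolding matches the paper: substitute $v=M-u\ge0$ so $\Delta v=\bar K e^{-2v}$ with $0<\bar\lambda\le\bar K\le\bar\Lambda$, induct on the scales $N^{k}$ with a target ratio $\sigma\in(2/N^{2},1)$, feed the density bounds into Lemma~\ref{lem. self similar harnack}, and interpolate using the monotonicity $\delta(NR)\le\delta(R)$ from Lemma~\ref{lem. naive understanding of delta}. But the primary mechanism you propose for the inductive step --- integrate $\Delta u=-Ke^{2u}$ over $Q_{N^{k+1}}$ to get $\lambda\int_{Q}e^{2u}\le\int_{\partial Q}|\partial_n u|$, and control $|\nabla u|$ on $\partial Q$ via $\|\nabla u\|_{\infty}\lesssim\rho^{-1}\mathrm{osc}(u)+\rho\|Ke^{2u}\|_{\infty}$ --- does not close, because you never actually control the oscillation. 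Lemma~\ref{lem. self similar harnack} gives $v(y)\le C\bigl(v(x)+N^{2k}\sigma^{k}\ln N\bigr)$; taking $x$ at the minimum or averaging over $x$ gives at best $\sup v\le C\bigl(\inf v+\mathrm{err}\bigr)$ or $\sup v\le C\bigl(\mathrm{avg}\,v+\mathrm{err}\bigr)$. To turn either into an oscillation bound you would need to know $\inf v$ (equivalently, $\mathrm{avg}\,v$) is small on the square, and that is exactly what you do not have: $u$ has no lower bound, so $v$ and its average can be arbitrarily large, and the error term $N^{2k}\sigma^k\ln N$ can be swamped. In the paper the needed reference point (a point where $v$ is small) is supplied by a \emph{contradiction} hypothesis on the density, and that structural device is what your outline is missing.

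Here is what the paper actually does in Lemma~\ref{lem. simplified density estimate}. It subdivides $Q_{N^{k+1}}$ into $N^{2}$ sub-squares $\widetilde Q_{ij}$ of side $N^{k}$ and proves the assertion that \emph{at least one} sub-square has $\mathcal D(\widetilde Q_{ij})\le\sigma^{k}/2$; averaging the densities of all $N^{2}$ sub-squares, one of which is $\le\sigma^{k}/2$ and the rest $\le\sigma^{k}$ by induction, gives exactly $\mathcal D(Q_{N^{k+1}})\le\sigma^{k+1}$ with the explicit choice $\sigma=1-\tfrac1{2N^{2}}$. The assertion is shown by contradiction: if \emph{every} sub-square has density $\ge\sigma^{k}/2$, then (a) each sub-square, in particular the one containing a given $x\in\partial Q_{N^{k+1}}$, contains a $y$ with $e^{-2v(y)}\ge\sigma^{k}/2$, hence $v(y)\le k+1$, and Lemma~\ref{lem. self similar harnack} gives $\max_{\partial Q_{N^{k+1}}}v\le C_{1}\bigl(k+1+\bar\Lambda N^{2k}\sigma^{k}\ln N\bigr)$; and (b) the reversed density bound is precisely hypothesis (2) of Lemma~\ref{lem. maximum principle}, which yields $v(0)\le\max_{\partial Q_{N^{k+1}}}v-c_{3}\bar\lambda\sigma^{k}N^{2k+2}$. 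Combining (a), (b) and $v(0)\ge0$ forces $c_{4}\bar\lambda N^{2}\le\tfrac{k+1}{(N^{2}-1/2)^{k}}+\bar\Lambda\ln N$, which fails for $N$ large, uniformly in $k$. You do gesture at Lemma~\ref{lem. maximum principle} as a ``route to keep in mind,'' but the contradiction you sketch (``$u\to-\infty$ near an interior maximum of $u$'') is not the one that works --- the contradiction is simply $v(0)\ge0$ versus the strictly negative bound produced by (a)+(b) --- and the $N^{2}$--sub-square decomposition that produces the clean decay factor $\sigma=1-\tfrac1{2N^{2}}$ is an essential ingredient absent from your proposal.
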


If $u\leq M$ globally in $\mathbb{R}^{2}$, we then let
\begin{equation}\label{substitution of variable}
    v:=M-u\geq0.
\end{equation}
It turns out that $v$ satisfies the equation
\begin{equation}
    \Delta v=K e^{2M} e^{-2v}.
\end{equation}
We denote a new curvature
\begin{equation}
    \bar{K}:=K e^{2M},
\end{equation}
then $\bar{K}\in[\bar{\lambda},\bar{\Lambda}]$ with
\begin{equation}
    \bar{\lambda}=\lambda e^{2M}>0,\quad\bar{\Lambda}=\Lambda e^{2M}.
\end{equation}
We will denote the following short-hand
\begin{equation}
    \mathcal{D}(\Omega)=\mathcal{D}_{-v}(\Omega),\quad\delta(R)=\delta_{-v}(R).
\end{equation}
We have the following initial understanding of $\delta(R)$. The proof is left to the readers.
\begin{lemma}\label{lem. naive understanding of delta}
    If $v\geq0$ in $\mathbb{R}^{2}$, then $\delta(R)\leq1$ for any $R>0$. Besides, for any $N\in2\mathbb{Z}^{+}+1$,
    \begin{equation}
        \delta(NR)\leq\delta(R).
    \end{equation}
\end{lemma}

Theorem~\ref{thm. density estimate} can then be reduced to the following version.
\begin{lemma}\label{lem. simplified density estimate}
    Assume that $v:\mathbb{R}^{2}\to[0,\infty)$ satisfies the equation
    \begin{equation}
        \Delta v=\bar{K}e^{-2v},\quad0<\bar{\lambda}\leq\bar{K}\leq\bar{\Lambda}.
    \end{equation}
    Then there exists $N\in2\mathbb{Z}^{+}+1$ depending only on $(\bar{\lambda},\bar{\Lambda})$, such that for any $k\geq0$,
    \begin{equation}
        \delta(N^{k})\leq(1-\frac{1}{2N^{2}})^{k}.
    \end{equation}
\end{lemma}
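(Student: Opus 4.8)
The plan is to run a single induction on $k$ where the inductive hypothesis is exactly $\delta(N^k)\le(1-\tfrac{1}{2N^2})^k$, and to use Lemma~\ref{lem. maximum principle} as the engine that produces the geometric decay at each step. The base case $k=0$ is $\delta(1)\le1$, which is Lemma~\ref{lem. naive understanding of delta}. For the inductive step, suppose the bound holds for all indices up to $k$; I want to show $\delta(N^{k+1})\le(1-\tfrac{1}{2N^2})^{k+1}$. The natural dichotomy is to ask whether the density on squares at the intermediate scales is "already small enough". Concretely, set $\sigma=1-\tfrac{1}{2N^2}$ (note $\sigma\in(2/N^2,1)$ once $N$ is large, which is where the constraint on $N$ comes from) and apply Lemma~\ref{lem. maximum principle} to $w=v$ with $f=\bar Ke^{-2v}$ after rescaling: if $f$ satisfies the lower bound $\frac{1}{N^{2i}}\int_{Q_{N^i}(x)}f\ge\sigma^i/2$ for all $i\le k$, then $v(x)\le\max_{\partial Q_{N^{k+1}}(x)}v-C\sigma^kN^{2k+2}$; but $v\ge0$ and $v$ is bounded above by whatever bound $\delta$ gives (via a mean value / Harnack argument, since $v\ge0$ and $\Delta v\le\bar\Lambda$), and for $N$ large this is a contradiction. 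Hence the lower bound on $f$ must \emph{fail} at some scale $i\le k$ and some center $x$, i.e. $\mathcal D(Q_{N^i}(x))<\sigma^i/2$ on some square; but then Lemma~\ref{lem. naive understanding of delta}'s monotonicity $\delta(N\cdot R)\le\delta(R)$ type reasoning, applied from that square outward, forces $\delta(N^{k+1})$ to be small. The cleanest way to phrase it: argue by contradiction against the conclusion for $k+1$, assume $\delta(N^{k+1})>\sigma^{k+1}$, pick a near-extremal square, and derive that $f=\bar Ke^{-2v}$ on a slightly smaller concentric square satisfies both hypotheses of Lemma~\ref{lem. maximum principle} at every scale $0,\dots,k$ (the upper bound $f\le\bar\Lambda\le1$ after absorbing constants, and the lower bound from the inductive hypothesis combined with the assumed largeness at scale $k+1$), then invoke the lemma to get $v(\text{center})\le\max_{\partial Q}v-C\sigma^kN^{2k+2}$, and compare with an upper bound for $v$ coming from the (modest) density control and a sub-mean-value inequality for the subharmonic-ish quantity.

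I would carry out the steps in this order. First, fix $N$ depending only on $(\bar\lambda,\bar\Lambda)$ so that: (a) after the scaling that turns the equation's right-hand side into something bounded by $1$, the normalization constants in Lemmas~\ref{lem. self similar harnack}--\ref{lem. maximum principle} apply; (b) $\sigma:=1-\tfrac{1}{2N^2}>2/N^2$; and (c) the quantity $C\sigma^kN^{2k+2}$ beats the a priori upper bound on $v$. Second, record the a priori pointwise bound: since $v\ge0$ and $0\le\Delta v\le\bar\Lambda$, the mean value property \eqref{mean value property} gives, for any ball $B_R(x)$, an estimate of $v(x)$ in terms of $\fint_{B_R(x)}v$ plus $O(\bar\Lambda R^2)$; and $\fint v$ is controlled by $-\ln\mathcal D$ via Jensen applied to $e^{-2v}$, so $v$ grows at most logarithmically-in-density, in particular like $O(R^{2-\epsilon})$-type bounds after iterating — but for this lemma I only need that on $Q_{N^{k+1}}$, $\max v \lesssim N^{2(k+1)}\sigma^{k+1}\ln N$ or similar, which is exactly what Lemma~\ref{lem. self similar harnack} yields once the density hypotheses hold. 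Third, run the contradiction: assume $\delta(N^{k+1})>\sigma^{k+1}$, choose $x_0$ with $\mathcal D(Q_{N^{k+1}}(x_0))$ near the sup, verify that for every $i\le k$ and every $x$, $\frac{1}{N^{2i}}\int_{Q_{N^i}(x)}\bar Ke^{-2v}\ge\bar\lambda\,\mathcal D(Q_{N^i}(x))\cdot(\text{const})$ — wait, this needs the \emph{lower} bound on the density at intermediate scales near $x_0$, which is where I use that a large integral at scale $N^{k+1}$ forces largeness at some intermediate scale, combined with the inductive upper bound $\delta(N^i)\le\sigma^i$ pinning things from above. Fourth, feed this into Lemma~\ref{lem. maximum principle} to get $v(x_0)\le\max_{\partial Q_{N^{k+1}}(x_0)}v-C\sigma^kN^{2k+2}$, then use Lemma~\ref{lem. self similar harnack} (with the \emph{upper} density bounds, which hold by the inductive hypothesis) to bound $\max_{\partial Q_{N^{k+1}}(x_0)}v\le C'(v(x_0)+N^{2k}\sigma^k\ln N)$; combining, $C\sigma^kN^{2k+2}\le C'(v(x_0)+N^{2k}\sigma^k\ln N)-v(x_0)+\dots$, and since the $N^{2k+2}$ term dominates $N^{2k}\ln N$ for $N$ large, we get a contradiction provided the $v(x_0)$ terms can be absorbed — which requires a little care because $v(x_0)$ appears with a coefficient $C'-1$ that could be positive.

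The main obstacle I anticipate is precisely the bookkeeping in the last step: Lemma~\ref{lem. self similar harnack} controls $\max w$ by $C(w(x)+\text{error})$ with a multiplicative constant $C>1$, while Lemma~\ref{lem. maximum principle} gives a \emph{subtractive} gain $-C\sigma^kN^{2k+2}$; to get a clean contradiction one must arrange that the subtractive gain exceeds $(C-1)v(x_0)+$ error, and $v(x_0)$ is itself only bounded by something like $N^{2k}\sigma^k\ln N$ (again via Lemma~\ref{lem. self similar harnack}, applied around a point where $v$ is small — such a point exists because $\fint e^{-2v}=\mathcal D>0$ means $v$ cannot be uniformly large). So the argument is: small-$v$ point exists $\Rightarrow$ $v(x_0)\lesssim N^{2k}\sigma^k\ln N$ $\Rightarrow$ the subtractive gain $\gtrsim N^{2k+2}\sigma^k$ dominates everything $\Rightarrow$ contradiction. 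Making the chain of constants consistent — in particular choosing $N$ large enough \emph{once}, depending only on $(\bar\lambda,\bar\Lambda)$ and the universal constants from the two lemmas — is the delicate part, but it is purely a matter of ordering the quantifiers correctly: all the universal constants are fixed first, then $N$ is chosen, then the induction runs uniformly in $k$. A secondary subtlety is that Lemma~\ref{lem. maximum principle} needs $f\ge0$ and $f\le1$ globally, so I must first rescale $v$ (replace $v(x)$ by $v(Lx)+\ln L$ for suitable $L=L(\bar\Lambda)$, or equivalently absorb $\bar\Lambda$) to normalize $\bar Ke^{-2v}\le1$; this rescaling interacts with the square sizes $Q_{N^i}$ and must be tracked, but it does not change the structure of the argument.
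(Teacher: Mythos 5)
Your proposal has the right skeleton --- induction on $k$, a dichotomy on sub-square densities, and a contradiction obtained by playing Lemma~\ref{lem. self similar harnack} against Lemma~\ref{lem. maximum principle} --- and this matches the paper's strategy. There is, however, a genuine gap in how you verify hypothesis~(2) of Lemma~\ref{lem. maximum principle}. You claim that the contradiction assumption $\mathcal{D}(Q_{N^{k+1}}(x_0))>\sigma^{k+1}$ together with the inductive upper bounds $\delta(N^i)\le\sigma^i$ yields the lower bound $\tfrac{1}{N^{2i}}\int_{Q_{N^i}(x)}f\ge\sigma^i/2$ ``at every scale $0,\dots,k$.'' This propagation does not work. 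The one-step averaging argument does give the scale-$k$ statement: if some sub-square $\widetilde{Q}_{ij}\subset Q_{N^{k+1}}(x_0)$ had $\mathcal{D}\le\sigma^k/2$, then since the rest satisfy $\mathcal{D}\le\sigma^k$ the average would be $\le\sigma^k(1-\tfrac{1}{2N^2})=\sigma^{k+1}$, contradicting the choice of $x_0$. But running the same argument one level further down, starting from $\mathcal{D}(\widetilde{Q}_{ij})>\sigma^k/2$ (rather than $>\sigma^k$), does not rule out a sub-sub-square of density $\le\sigma^{k-1}/2$; the scale-$(k-1)$ lower bound is simply unavailable. What saves the argument --- and this is the insight you need to close your proposal --- is that the \emph{proof} of Lemma~\ref{lem. maximum principle} only uses the lower bound at scale $N^k$, integrated over the annular rings $Q_{(2t+1)N^k}\setminus Q_{(2t-1)N^k}$, and these rings are exactly unions of the grid sub-squares $\widetilde{Q}_{ij}$, which is precisely where the contradiction assumption does give $\mathcal{D}>\sigma^k/2$. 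Relatedly, the remark that ``Lemma~\ref{lem. naive understanding of delta}'s monotonicity $\delta(NR)\le\delta(R)$, applied from that square outward, forces $\delta(N^{k+1})$ to be small'' does not do what you want: the monotonicity controls a supremum, and a single small square does not propagate outward; the one-step averaging above at a fixed center is what actually does the work.

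Your final chain $v(x_0)\to\max_{\partial Q}v\to v(x_0)$ indeed runs into the $(C'-1)v(x_0)$ coefficient you flag. The paper avoids this entirely and you should too: apply Lemma~\ref{lem. maximum principle} at the center $0$ to get $v(0)\le\max_{\partial Q_{N^{k+1}}}v-c\bar\lambda\sigma^kN^{2k+2}$; separately bound $\max_{\partial Q_{N^{k+1}}}v$ from above by applying Lemma~\ref{lem. self similar harnack} from a nearby point $y$ with $v(y)\le k+1$ (such $y$ exists in each sub-square because $\mathcal{D}(\widetilde{Q}_{ij})\ge\sigma^k/2$), giving $\max_{\partial Q}v\le C_1(k+1+\bar\Lambda N^{2k}\sigma^k\ln N)$; and then simply invoke $v(0)\ge0$. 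There is no loop and no troublesome multiplicative constant; dividing by $N^{2k}\sigma^k$ produces $\tfrac{k+1}{(N^2-1/2)^k}+\bar\Lambda\ln N\ge c\bar\lambda N^2$, which fails once $N=N(\bar\lambda,\bar\Lambda)$ is fixed large. Your fallback of bounding $v(x_0)$ from above via a small-$v$ point would also work but is a longer route to the same inequality.
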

Let's first see how Lemma~\ref{lem. simplified density estimate} implies Theorem~\ref{thm. density estimate}.
\begin{proof}[Proof of Theorem~\ref{thm. density estimate}]
    By defining $v=M-u$ as in \eqref{substitution of variable} and the discussions preceding Lemma~\ref{lem. simplified density estimate}, we know that $(\bar{\lambda},\bar{\Lambda})$ depend on $(M,\lambda,\Lambda)$. Let
    \begin{equation}
        \epsilon=-\log_{N}(1-\frac{1}{2N^{2}}),
    \end{equation}
    then Lemma~\ref{lem. simplified density estimate} implies $\delta(R)\leq N^{2}R^{-\epsilon}$ for any $R\geq1$. By definition of $v$, we have
    \begin{equation}
        \delta_{u}(R)\leq N^{2}e^{2M}R^{-\epsilon}=:C R^{-\epsilon}.
    \end{equation}
    One see that $C=N^{2}e^{2M}$ also eventually depends on $(M,\lambda,\Lambda)$.
\end{proof}
In the remaining part of this section, we show Lemma~\ref{lem. simplified density estimate}. For convenience, we set
\begin{equation}
    \sigma=(1-\frac{1}{2N^{2}}),
\end{equation}
then it is clear that $\sigma\geq1/e\geq2/N^{2}$. The proof is based on an inductive argument, and the existence of $N$ will be made clear at the end of the proof.
\begin{proof}[Proof of Lemma~\ref{lem. simplified density estimate}]
    First, by Lemma~\ref{lem. naive understanding of delta}, we see Lemma~\ref{lem. simplified density estimate} is automatically correct for $k=0$. Now suppose it holds for $i=0,1,\cdots,k$ that
    \begin{equation}
        \delta(N^{i})\leq\sigma^{i},
    \end{equation}
    and we wish to show that $\mathcal{D}(Q_{N^{k+1}})\leq\sigma^{k+1}$. By moving around the origin to elsewhere, we can deduce $\delta(N^{k+1})\leq\sigma^{k+1}$, and thus the induction will go on.

    Divide $Q_{N^{k+1}}=Q_{N^{k+1}}(0)$ into a $N\times N$ grid of sub-squares, which we write as
    \begin{equation}\label{eq. subdivision of large square}
        Q_{N^{k+1}}=\bigcup_{i,j=\frac{1-N}{2}}^{\frac{N-1}{2}}Q_{N^{k}}(N^{k}i,N^{k}j)=:\bigcup_{(i,j)\in\mathcal{G}}\widetilde{Q}_{ij},\quad\mathcal{G}=\Big\{\frac{1-N}{2},\cdots,\frac{N-1}{2}\Big\}^{2}.
    \end{equation}
    
    We make the following assertion. Notice that it is true only if $N$ is chosen large (but independent of $k$).
    \begin{itemize}
        \item Assertion: There exists at lease one sub-square $\widetilde{Q}_{ij}$, such that $\mathcal{D}(\widetilde{Q}_{ij})\leq\sigma^{k}/2$.
    \end{itemize}
    If the assertion is correct, say for example $\mathcal{D}(\widetilde{Q}_{i^{*}j^{*}})\leq\sigma^{k}/2$ for some $(i^{*},j^{*})\in\mathcal{G}$. Then $\mathcal{D}(Q_{N^{k+1}})\leq\sigma^{k+1}$ follows from the computation below:
    \begin{align}
        \mathcal{D}(Q_{N^{k+1}})=&\frac{1}{N^{2}}\sum_{(i,j)\in\mathcal{G}}\mathcal{D}(\widetilde{Q}_{ij})=\frac{1}{N^{2}}\Big\{\mathcal{D}(\widetilde{Q}_{i^{*}j^{*}})+\sum_{(i,j)\neq(i^{*},j^{*})}\mathcal{D}(\widetilde{Q}_{ij})\Big\}\\
        \leq&\frac{1}{N^{2}}\Big\{\frac{\sigma^{k}}{2}+(N^{2}-1)\sigma^{k}\Big\}=\sigma^{k+1}.
    \end{align}

    Now let's prove the assertion using contradiction. Suppose otherwise
    \begin{equation}\label{evenly distributed mass, to be contradicted}
        \sigma^{k}/2\leq\mathcal{D}(\widetilde{Q}_{ij})\leq\sigma^{k},\mbox{ for all }(i,j)\in\mathcal{G}.
    \end{equation}
    
    Pick an arbitrary boundary point $x\in\partial Q_{N^{k+1}}$, and let's say $x\in\widetilde{Q}_{ij}$ for some $(i,j)\in\mathcal{G}$. By \eqref{evenly distributed mass, to be contradicted}, there exists some $y\in\widetilde{Q}_{ij}$, such that $e^{-2v(y)}\geq\sigma^{k}/2$, meaning
    \begin{equation}
        v(y)\leq-\frac{1}{2}\ln{\frac{\sigma^{k}}{2}}\leq k+1.
    \end{equation}
    We apply Lemma~\ref{lem. self similar harnack} (once or twice, depending on the difference $x-y$) to $w=v/\bar{\Lambda}$ and obtain that
    \begin{equation}
        v(x)\leq C_{1}\Big(v(y)+\bar{\Lambda}N^{2k}\sigma^{k}\ln{N}\Big)
    \end{equation}
    for a universal $C_{1}$. Therefore, we have
    \begin{equation}\label{eq. contradiction 1}
        \max_{\partial Q_{N^{k+1}}}v\leq C_{1}\Big(k+1+\bar{\Lambda}N^{2k}\sigma^{k}\ln{N}\Big).
    \end{equation}

    On the other hand, we apply Lemma~\ref{lem. maximum principle} to $w=v/\bar{\lambda}$ and obtain that
    \begin{equation}\label{contradiction 2}
        v(0)\leq \max_{\partial Q_{N^{k+1}}} v-c_3\bar{\lambda}\sigma^k N^{2k+2}
    \end{equation}
    for a universal constant $c_3>0$.

    
    
    We combine \eqref{eq. contradiction 1} with \eqref{contradiction 2}, as well as the assumption $v\geq0$, and get that there exists a universal $c_{4}>0$ such that
    \begin{equation}
        k+1+\bar{\Lambda}N^{2k}\sigma^{k}\ln{N}\geq c_{4}\bar{\lambda}\sigma^{k}N^{2k+2}.
    \end{equation}
    We can divide $N^{2k}\sigma^{k}$ on both sides, plug in $\sigma=(1-\frac{1}{2N^{2}})$, and it follows that
    \begin{equation}\label{contradiction 3}
        \frac{k+1}{(N^{2}-1/2)^{k}}+\bar{\Lambda}\ln{N}\geq c_{4}\bar{\lambda}N^{2}.
    \end{equation}
    By choosing $N\in2\mathbb{Z}^{+}+1$ sufficiently large depending only on $(\bar{\lambda},\bar{\Lambda})$, we see \eqref{contradiction 3} fails for any $k\geq0$, and thus we have proven the assertion.
\end{proof}

\section{Growth estimates of $u$}
In this section, we shall derive growth of $u$ based on the area growth estimates and then present a new proof of Theorem~\ref{thm. EGLX}. 

\subsection{Proof of Theorem~\ref{thm(main). derivative growth rate} via Schauder estimates}
In this part we prove Theorem~\ref{thm(main). derivative growth rate}. We state its more precise form in the two propositions below.
\begin{proposition}[$L^{\infty}$ and $C^{1}$ estimate]\label{prop. C^1-estimate}
    Let $\lambda\leq K\leq \Lambda$ be a measurable function where $\lambda>0$, and let $u\leq M$ be a global solution of \eqref{eq. EqL variable curvature}. Then there exists constants $\epsilon, C>0$ depending only on $(M,\lambda,\Lambda)$, such that for any $R\geq 1$,
    \begin{equation}
        \|u\|_{L^{\infty}(B_{R})}\leq CR^{2-\epsilon},\quad\|Du\|_{L^{\infty}(B_{R})}\leq CR^{1-\epsilon^{2}/2}.
    \end{equation}
\end{proposition}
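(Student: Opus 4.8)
The plan is to derive the two bounds in order: first the $L^{\infty}$ estimate, directly from the density estimate via the self-similar Harnack principle, and then the gradient estimate as a consequence, by a standard interior estimate for Poisson's equation.

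For the $L^{\infty}$ bound I would pass to $v:=M-u\ge 0$, which as in Section~\ref{Section. essential tools} solves $\Delta v=\bar K e^{-2v}$ with $\bar K=Ke^{2M}\in[\bar\lambda,\bar\Lambda]$. Let $N\in 2\mathbb{Z}^{+}+1$ and $\sigma=1-\frac{1}{2N^{2}}$ be the pair produced by Lemma~\ref{lem. simplified density estimate}, so that $\delta_{-v}(N^{i})\le\sigma^{i}$ for every $i\ge 0$, and put $\epsilon:=-\log_{N}\sigma>0$ (this is precisely the exponent of Theorem~\ref{thm. density estimate}, and one checks $\epsilon<1$). I would apply Lemma~\ref{lem. self similar harnack} to $w:=v/\bar\Lambda$, for which $\Delta w=f$ with $f:=\bar K e^{-2v}/\bar\Lambda$: since $v\ge 0$ one has $0\le f\le 1$, and for all $x\in\mathbb{R}^{2}$ and $i\le k$,
\[
\frac{1}{N^{2i}}\int_{Q_{N^{i}}(x)}f\,dx\le\frac{1}{N^{2i}}\int_{Q_{N^{i}}(x)}e^{-2v}\,dx=\mathcal{D}_{-v}\big(Q_{N^{i}}(x)\big)\le\delta_{-v}(N^{i})\le\sigma^{i},
\]
so both hypotheses of Lemma~\ref{lem. self similar harnack} hold. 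It then follows that for all $k\ge 0$, $x\in\mathbb{R}^{2}$ and $y\in Q_{N^{k}}(x)$,
\[
v(y)\le C\big(v(x)+\bar\Lambda N^{2k}\sigma^{k}\ln N\big)=C\big(v(x)+\bar\Lambda(N^{k})^{2-\epsilon}\ln N\big),
\]
using $N^{2k}\sigma^{k}=N^{k(2-\epsilon)}$.

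Taking $x=0$ and, given $R\ge 1$, choosing the least $k$ with $N^{k}/2\ge R$ (so that $B_{R}\subset Q_{N^{k}}$ and $N^{k}\le 2NR$), this gives $\sup_{B_{R}}v\le C\big(v(0)+R^{2-\epsilon}\big)$; absorbing the constant $v(0)$ into $R^{2-\epsilon}\ge 1$ and recalling $u=M-v$ yields $\|u\|_{L^{\infty}(B_{R})}\le CR^{2-\epsilon}$. For the gradient estimate I would then invoke the interior gradient estimate for Poisson's equation: for $x_{0}\in B_{R}$ and $0<\rho\le R$,
\[
|Du(x_{0})|\le C\Big(\tfrac{1}{\rho}\|u\|_{L^{\infty}(B_{\rho}(x_{0}))}+\rho\,\|\Delta u\|_{L^{\infty}(B_{\rho}(x_{0}))}\Big).
\]
The point is that the global upper bound $u\le M$ makes the right-hand side of the equation uniformly bounded, $\|\Delta u\|_{L^{\infty}(B_{\rho}(x_{0}))}=\|Ke^{2u}\|_{L^{\infty}(B_{\rho}(x_{0}))}\le\Lambda e^{2M}$, while the first step controls $\|u\|_{L^{\infty}(B_{\rho}(x_{0}))}\le\|u\|_{L^{\infty}(B_{2R})}\le CR^{2-\epsilon}$. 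Hence $|Du(x_{0})|\le C\big(R^{2-\epsilon}/\rho+\rho\big)$, and the choice $\rho=R^{1-\epsilon/2}\in[1,R]$ balances the two terms to give $\|Du\|_{L^{\infty}(B_{R})}\le CR^{1-\epsilon/2}$, which a fortiori implies the stated $O(R^{1-\epsilon^{2}/2})$ since $0<\epsilon<1$.

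Most of the work is harvesting Section~\ref{Section. essential tools}: the substitution $v=M-u$, the interior gradient estimate, and the optimization in $\rho$ are all routine. The one step that requires care is matching the parameters $(N,\sigma)$ fed into Lemma~\ref{lem. self similar harnack} with those coming out of the density estimate, so that hypothesis (2) holds with no extra multiplicative constant --- this is why I would quote Lemma~\ref{lem. simplified density estimate} directly rather than the scale-free form of Theorem~\ref{thm. density estimate}. A minor point worth flagging is that the Harnack bound is anchored at the single value $v(0)=M-u(0)$, so the constant in the $L^{\infty}$ estimate is a priori solution-dependent; this is immaterial for the application to Theorem~\ref{thm. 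EGLX}, where one normalizes $u$ beforehand, but it should be acknowledged.
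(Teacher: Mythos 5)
Your $L^\infty$ argument is essentially the paper's: pass to $v=M-u$, use Lemma~\ref{lem. simplified density estimate} to verify hypothesis (2) of the self-similar Harnack Lemma~\ref{lem. self similar harnack} for $w=v/\bar\Lambda$, and chain balls out to radius $R$. You are also right to flag the anchor point: the Harnack bound is of the form $v(y)\le C\big(v(0)+R^{2-\epsilon}\big)$, so the constant in the $L^\infty$ estimate depends on the particular solution through $u(0)$ as well as on $(M,\lambda,\Lambda)$, a dependence the paper glosses over. It is real (scaling the standard bubble so that its supremum tends to $-\infty$ while staying below $M$ keeps $(M,\lambda,\Lambda)$ fixed but sends $|u(0)|\to\infty$), but, as you note, it is immaterial for the application to Theorem~\ref{thm. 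EGLX}.

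For the gradient bound your route is genuinely different and in fact sharper. The paper applies the scaled $C^{1,\alpha}$ Schauder estimate on the full ball $B_R$ with $\alpha=1-\epsilon$, measuring the source $Ke^{2u}$ in $L^{2/\epsilon}(B_R)$; that norm is controlled by interpolating the area bound $\|Ke^{2u}\|_{L^1(B_R)}\lesssim R^{2-\epsilon}$ against the uniform bound $\|Ke^{2u}\|_{L^\infty}\le\Lambda e^{2M}$, and the computation lands on the exponent $1-\epsilon^2/2$. You instead use the elementary pointwise interior gradient estimate on intermediate balls $B_\rho(x_0)$, bound the source only in $L^\infty$, and optimize over $\rho=R^{1-\epsilon/2}\in[1,R]$. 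The paper exploits the $L^1$ smallness of the source through interpolation; you exploit the $L^\infty$ bound on $u$ at every intermediate scale through localization, and here localization does strictly better: you get $\|Du\|_{L^\infty(B_R)}=O(R^{1-\epsilon/2})$, which for $\epsilon\in(0,1)$ improves the stated $O(R^{1-\epsilon^2/2})$. Both derivations are correct; yours is more elementary and yields a better exponent, so it is a legitimate strengthening of the proposition, though neither is expected to be sharp given the conjectured optimal $\epsilon=1$ in Theorem~\ref{thm(main). density estimate}.
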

\begin{proof}
    By Theorem~\ref{thm. density estimate} and its proof, we get that there exits an integer $N\in 2\mathbb{Z}^++1$ depending only on $(M,\lambda, \Lambda)$ such that for any $x\in\mathbb{R}^2$ and any $i\geq 0$,
 \begin{align*}
     \frac{1}{N^{2i}}\int_{Q_{N^i}(x)}e^{2u}\leq C (N^{-\epsilon})^i,
 \end{align*}
 here $\epsilon=-\log_N(1-\frac{1}{2N^2})$ is positive and depends only on $(M,\lambda,\Lambda)$.
 
 Lemma~\ref{lem. self similar harnack} implies that there is a universal constant $C$ (independent on $(N,\epsilon,k)$), such that for any $k\geq 0$ and any $y\in Q_{N^k}(x)$, we have
 \begin{align*}
     -u(y)\leq C(-u(x)+N^{(2-\epsilon)k}\ln N).
 \end{align*}
 It follows that
 \begin{align*}
     -u(y)\leq C(1+R^{2-\epsilon}\ln N), \quad \forall y\in B_R.
 \end{align*}
 Note that $N$ also depends only on $(M,\lambda, \Lambda)$. We eventually get that
 \begin{align*}
     |u(x)|\leq CR^{2-\epsilon},\quad \forall y\in B_R
 \end{align*}
 for a constant C depending only on $(M,\lambda,\Lambda)$.

 Apply the $C^{1,1-\epsilon}$ estimate (see for instance, \cite[Theorem 4.16]{GT}), we get
    \begin{equation}
        R\|Du\|_{L^{\infty}(B_{R/2})}+R^{2-\epsilon}[Du]_{C^{\alpha}(B_{R/2})}\leq C\left(
        \|u\|_{L^{\infty}(B_{R})}+R^{2-\epsilon}\|Ke^{2u}\|_{L^{2/\epsilon}(B_R)}
        \right)
        \leq C R^{2-\epsilon^{2}/2},
    \end{equation}
    where $\|Ke^{2u}\|_{L^{2/\epsilon}(B_R)}$ is obtained from an interpolation between $L^{1}$ space and $L^{\infty}$ space. Hence the $C^1$-estimate follows.
\end{proof}

\begin{proposition}[$C^{2,\alpha}$ estimate]\label{prop. C^2-estimate}
    Fix an $\alpha\in (0,1)$. Let $\lambda\leq K\leq \Lambda$ be a H\"older continuous function with bounded global semi-norm $[K]_{C^{\alpha}(\mathbb{R}^{2})}$, and let $u\leq M$ be a global solution of \eqref{eq. EqL variable curvature}. 
    Then there exists constants $\epsilon, C>0$ depending only on $(M,\lambda, \Lambda)$ and $(M,\lambda,\Lambda,\alpha)$ respectively, such that for any $R\geq 1$ and $x\in B_R$, we have
    \begin{equation}
    \|D^{2}u\|_{L^{\infty}(B_R)}\leq C R^{\alpha}\ln{R},\quad[D^{2}u]_{C^{\alpha}(B_R)}\leq C\ln{R}.
    \end{equation}
\end{proposition}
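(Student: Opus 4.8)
The plan is to run interior Schauder estimates on unit balls $B_{1}(x_{0})$ centered at points $x_{0}\in B_{R}$, writing the equation as $\Delta u=-Ke^{2u}=:g$. The key feature is that the a priori upper bound $u\le M$ makes the right‑hand side \emph{globally} bounded, $\|g\|_{L^{\infty}(\mathbb{R}^{2})}\le\Lambda e^{2M}$, and its Hölder norm is controlled by the Hölder bound on $K$ together with the gradient estimate of Proposition~\ref{prop. C^1-estimate}: since $[g]_{C^{\alpha}(B_{1}(x_{0}))}\le e^{2M}[K]_{C^{\alpha}(\mathbb{R}^{2})}+\Lambda\,[e^{2u}]_{C^{\alpha}(B_{1}(x_{0}))}$ and $|\nabla(e^{2u})|\le 2e^{2M}|\nabla u|$, an interpolation inequality gives $[e^{2u}]_{C^{\alpha}(B_{1}(x_{0}))}\le Ce^{2M}\|\nabla u\|_{L^{\infty}(B_{1}(x_{0}))}^{\alpha}=O(R^{\alpha(1-\epsilon^{2}/2)})=O(R^{\alpha})$ for $x_{0}\in B_{R}$. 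Thus $\|g\|_{C^{\alpha}(B_{1}(x_{0}))}=O(R^{\alpha})$ uniformly.

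The delicate point is the zeroth‑order term in the Schauder estimate. Naively it is $\|u\|_{L^{\infty}(B_{1}(x_{0}))}=O(R^{2-\epsilon})$, which is far too large; but $D^{2}u$ and $\Delta u$ are invariant under subtracting an affine function, so the relevant quantity is $\inf_{\ell\ \mathrm{affine}}\|u-\ell\|_{L^{\infty}(B_{1}(x_{0}))}$, and I aim to show it is $O(R^{\alpha}+\ln R)$. To do so I would decompose $u$ on the \emph{large} ball $B_{R}(x_{0})$ as $u=h+w$, where $w$ solves $\Delta w=g$ in $B_{R}(x_{0})$ with $w=0$ on $\partial B_{R}(x_{0})$ and $h$ is harmonic. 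For the harmonic part, the best affine approximation of a harmonic function satisfies $\inf_{\ell}\|h-\ell\|_{L^{\infty}(B_{1}(x_{0}))}\le CR^{-2}\,\mathrm{osc}_{B_{R}(x_{0})}h\le CR^{-2}\bigl(\mathrm{osc}_{B_{R}(x_{0})}u+2\|w\|_{L^{\infty}(B_{R}(x_{0}))}\bigr)$; one checks $\mathrm{osc}_{B_{R}(x_{0})}u=O(R\cdot R^{1-\epsilon^{2}/2})$ and, from $|w|\lesssim\int_{B_{R}(x_{0})}\ln\frac{2R}{|\cdot-y|}|g(y)|\,dy$ together with the area growth estimate $\int_{B_{\rho}}e^{2u}=O(\rho^{2-\epsilon})$ of Theorem~\ref{thm. density estimate}, that $\|w\|_{L^{\infty}(B_{R}(x_{0}))}=O(R^{2-\epsilon}\ln R)$, so the harmonic part contributes only $O(1)$ after the $R^{-2}$ gain. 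For the potential part, $g\in C^{\alpha}$ gives $w\in C^{2}$ near the center, so $\inf_{\ell}\|w-\ell\|_{L^{\infty}(B_{1}(x_{0}))}\le C\|D^{2}w\|_{L^{\infty}(B_{1}(x_{0}))}$, and the latter is a Calderón–Zygmund/Newtonian‑potential integral $D^{2}w(x)=\int_{B_{R}(x_{0})}D_{x}^{2}N(x-y)\,g(y)\,dy+(\text{harmonic corrector})$; I would estimate it by splitting into the region $|x-y|\le 1$ (handled by $[g]_{C^{\alpha}(B_{2}(x_{0}))}=O(R^{\alpha})$), the region $1<|x-y|\le R$ via a dyadic decomposition in which $\int_{B_{2^{j+1}}(x)}e^{2u}=O(2^{j(2-\epsilon)})$ turns the sum into the convergent series $\sum_{j}2^{-j\epsilon}$, and the principal‑value/boundary pieces, which produce the factor $\ln R$. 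Altogether $\inf_{\ell}\|u-\ell\|_{L^{\infty}(B_{1}(x_{0}))}=O(R^{\alpha}+\ln R)$.

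Feeding these bounds into the interior Schauder estimate yields, for every $x_{0}\in B_{R}$,
\[
\|D^{2}u\|_{L^{\infty}(B_{1/2}(x_{0}))}+[D^{2}u]_{C^{\alpha}(B_{1/2}(x_{0}))}\le C\Bigl(\|g\|_{C^{\alpha}(B_{1}(x_{0}))}+\inf_{\ell}\|u-\ell\|_{L^{\infty}(B_{1}(x_{0}))}\Bigr)=O(R^{\alpha}+\ln R),
\]
and since $\epsilon>0$ is fixed this is $O(R^{\alpha}\ln R)$; a covering argument over $B_{R}$ then gives $\|D^{2}u\|_{L^{\infty}(B_{R})}=O(R^{\alpha}\ln R)$. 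The sharper seminorm bound $[D^{2}u]_{C^{\alpha}(B_{R})}=O(\ln R)$ would be obtained by feeding the improved $C^{1,\alpha}$ and $C^{2,\alpha}$ control back into the estimate for $[g]_{C^{\alpha}}$ and iterating, using also that $D^{2}u$ is bounded near the origin and that the $C^{\alpha}$‑seminorm of $D^{2}u$ is a local quantity at unit scale away from far‑apart pairs of points.

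The main obstacle, and where essentially all the work lies, is the estimate for $\inf_{\ell}\|u-\ell\|_{L^{\infty}(B_{1}(x_{0}))}$: it cannot be closed purely locally because $\|u\|_{L^{\infty}(B_{1}(x_{0}))}$ is itself as large as $R^{2-\epsilon}$, and it is precisely the potential‑theoretic decomposition on the large ball $B_{R}(x_{0})$, powered by the area growth estimate of Theorem~\ref{thm. density estimate}, that both annihilates the low‑order growth (via the $R^{-2}$ gain on the harmonic part) and is responsible for the appearance of the logarithm.
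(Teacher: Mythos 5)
Your decomposition differs substantially from the paper's, and part of your machinery is unnecessary. The paper simply applies the $C^{2,\alpha}$ Schauder estimate at scale $R$ on $B_{R}$, which after rescaling reads
\[
R^{2}\|D^{2}u\|_{L^{\infty}(B_{R/2})}+R^{2+\alpha}[D^{2}u]_{C^{\alpha}(B_{R/2})}\leq C\Big(\|u\|_{L^{\infty}(B_{R})}+R^{2}\|Ke^{2u}\|_{L^{\infty}(B_{R})}+R^{2+\alpha}[Ke^{2u}]_{C^{\alpha}(B_{R})}\Big).
\]
Here $\|u\|_{L^{\infty}(B_{R})}=O(R^{2-\epsilon})$ is harmless once divided by $R^{2}$, so the $R^{-2}$ gain you fight for via affine subtraction and the Newtonian-potential decomposition on $B_{R}(x_{0})$ is already built into the scaled estimate; that whole part of your argument can be replaced by one line.

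The genuine gap is in the seminorm bound $[D^{2}u]_{C^{\alpha}(B_{R})}=O(\ln R)$, and it traces back to your estimate of $[e^{2u}]_{C^{\alpha}(B_{1}(x_{0}))}$. The interpolation $[e^{2u}]_{C^{\alpha}(B_{1}(x_{0}))}\leq Ce^{2M}\|\nabla u\|_{L^{\infty}(B_{1}(x_{0}))}^{\alpha}=O(R^{\alpha})$ is too coarse: fed into Schauder at any scale it can only produce $[D^{2}u]_{C^{\alpha}(B_{R})}=O(R^{\alpha})$, which is strictly weaker than the claim for $\alpha>0$. The suggestion to ``feed the improved control back and iterate'' does not obviously close this, because the bottleneck, namely that $\|\nabla u\|_{L^{\infty}(B_{1}(x_{0}))}$ can be as large as $R^{1-\epsilon^{2}/2}$, is not ameliorated by a $C^{2}$ bound of size $R^{\alpha}$. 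The missing idea is the paper's Harnack dichotomy. Applying the Harnack inequality (or Lemma~\ref{lem. self similar harnack} with $k=1$) to $v=M-u\geq 0$ on $B_{4}(x)$ shows that for every $x\in B_{R}$ either $\min_{B_{2}(x)}u\geq -C_{1}\ln R$ or $\max_{B_{2}(x)}u\leq -10\ln R$. In the first case a local $C^{1,\alpha}$ Schauder estimate on $B_{2}(x)$ (where now $\|u\|_{L^{\infty}(B_{2}(x))}=O(\ln R)$) gives $[u]_{C^{\alpha}(B_{1}(x))}=O(\ln R)$ and hence $[e^{2u}]_{C^{\alpha}(B_{1}(x))}\leq e^{2M}[u]_{C^{\alpha}(B_{1}(x))}=O(\ln R)$; in the second case $e^{2u}\leq R^{-20}$ on $B_{2}(x)$ annihilates the large factor $[u]_{C^{\alpha}(B_{1}(x))}=O(R^{1-\epsilon^{2}/2})$ coming from Proposition~\ref{prop. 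C^1-estimate}. Together with the global boundedness of $e^{2u}$ this yields $[Ke^{2u}]_{C^{\alpha}(B_{R})}=O(\ln R)$, after which the scaled Schauder display above gives both $\|D^{2}u\|_{L^{\infty}(B_{R})}=O(R^{\alpha}\ln R)$ and $[D^{2}u]_{C^{\alpha}(B_{R})}=O(\ln R)$. Without this dichotomy your argument proves only the first of the two conclusions.
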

\begin{proof}
    Using the assumption that $u\leq M$ and Proposition~\ref{prop. C^1-estimate}, we could estimate the $C^{\alpha}$ semi-norm of $e^{2u}$ as follows.
    
    We can apply the standard Harnack inequality to $B_{4}(x)$ for $x\in B_{R}$ (or set $k=1$ in Lemma~\ref{lem. self similar harnack}), and obtain that
    \begin{equation}
        \mbox{either }\min_{B_{2}(x)}u\geq-C_{1}\ln{R},\quad\mbox{or }\max_{B_{2}(x)}u\leq-10\ln{R}.
    \end{equation}
    In the former case, we apply the $C^{1,\alpha}$ estimate in $B_{2}(x)$ and obtain
    \begin{equation}
        [e^{2u}]_{C^{\alpha}(B_{1}(x))}\leq e^{2M}[u]_{C^{\alpha}(B_{1}(x))}\leq C_{2}\ln{R}.
    \end{equation}
    In the later case, we have
    \begin{equation}
        [e^{2u}]_{C^{\alpha}(B_{1}(x))}\leq R^{-10}[u]_{C^{\alpha}(B_{1}(x))}\leq C_{2}\ln{R},
    \end{equation}
    where we have used Proposition~\ref{prop. C^1-estimate} to control $[u]_{C^{\alpha}(B_{1}(x))}$.
    
Since $e^{2u}$ is globally bounded, the $C^{\alpha}$ semi-norm in each $B_{1}$ implies that
    \begin{equation}
        [K e^{2u}]_{C^{\alpha}(B_{R})}\leq e^{2M}[K]_{C^{\alpha}(B_{R})}+\Lambda\{e^{2M}+\sup_{x\in B_{R}}[e^{2u}]_{C^{\alpha}(B_{1}(x))}\}\leq C_{3}\ln{R}.
    \end{equation} 
Now we apply the $C^{2,\alpha}$-estimate and obtain
    \begin{align}
    &R^{2}\|D^{2}u\|_{L^{\infty}(B_{R/2})}+R^{2+\alpha}[D^{2}u]_{C^{\alpha}(B_{R/2})}\\
    \leq&C\left(
    \|u\|_{L^{\infty}(B_{R})}+R^2\|Ke^{2u}\|_{L^{\infty}(B_R)}+R^{2+\alpha}[Ke^{2u}]_{C^{\alpha}(B_R)}
     \right)
     \leq C R^{2+\alpha}\ln{R},
    \end{align}
and we have reached the desired estimates.
\end{proof}

\subsection{A new proof of Theorem~\ref{thm. EGLX}}
In this subsection, we show Theorem~\ref{thm. EGLX} using a method different from \cite{EGLX}. We start by recalling the following lemma obtained in Hang-Wang \cite{HW}.
\begin{lemma}[Hang-Wang]\label{lem. Hang-Wang}
    Let $u$ be a solution of \eqref{eq. EqL}, then the function $u_{zz}-u_{z}^{2}$ is holomorphic in the complex coordinate $\mathbb{C}\approx\mathbb{R}^{2}$.
\end{lemma}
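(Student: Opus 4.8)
\textbf{Proof proposal for Lemma~\ref{lem. Hang-Wang}.} The plan is to verify directly that $\partial_{\bar z}(u_{zz}-u_z^2)=0$, using nothing but the equation $\Delta u = -e^{2u}$ rewritten in complex notation. First I would record the basic identities: with $z=x_1+ix_2$, $\partial_z = \tfrac12(\partial_{x_1}-i\partial_{x_2})$, $\partial_{\bar z}=\tfrac12(\partial_{x_1}+i\partial_{x_2})$, one has $\Delta = 4\partial_z\partial_{\bar z}$, so the Liouville equation becomes $4u_{z\bar z} = -e^{2u}$, i.e. $u_{z\bar z} = -\tfrac14 e^{2u}$. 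Since $u$ is real and (by elliptic regularity) smooth, all mixed partials commute and we may freely differentiate this relation.

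The core computation is then short. Differentiate $u_{z\bar z}=-\tfrac14 e^{2u}$ with respect to $z$ to get $u_{zz\bar z} = -\tfrac12 u_z e^{2u} = 2 u_z u_{z\bar z}$. Hence
\begin{equation}
\partial_{\bar z}\big(u_{zz}-u_z^2\big) = u_{zz\bar z} - 2u_z u_{z\bar z} = 2u_z u_{z\bar z} - 2u_z u_{z\bar z} = 0 .
\end{equation}
Therefore $u_{zz}-u_z^2$ is annihilated by $\partial_{\bar z}$, and being smooth it is holomorphic on $\mathbb{C}\cong\mathbb{R}^2$. (One could phrase the same thing invariantly: $u_{zz}-u_z^2 = \partial_z^2 u - (\partial_z u)^2$ is, up to a factor, the $(2,0)$-part of the Schwarzian/Hessian of the conformal factor, which is holomorphic precisely because the metric $e^{2u}g_0$ has constant curvature — but the brute-force identity above is the cleanest route.)

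There is essentially no obstacle here; the only point requiring a word is the justification that $u$ is smooth enough for the manipulations, which follows from standard interior elliptic regularity applied to $\Delta u = -e^{2u}$ bootstrapped from, say, $u\in C^2$ (or even from $u$ merely a distributional solution that is locally bounded). If one wants a computation-free argument, note that for a constant-curvature conformal metric the traceless Hessian of $u$ with respect to $g_0$ is a holomorphic quadratic differential by the Codazzi equation; but since the statement is local and elementary, I would simply present the two-line $\partial_{\bar z}$ calculation above and remark that it is the motivation for isolating the quantity $u_{zz}-u_z^2$, whose vanishing (or being a nonzero constant) drives the classification in the sequel.
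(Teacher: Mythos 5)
Your proof is correct and is exactly the computation the paper gestures at ("One can directly compute $\partial_{\bar z}(u_{zz}-u_z^2)$"): you write the Liouville equation as $u_{z\bar z}=-\tfrac14 e^{2u}$, differentiate in $z$, and cancel. The regularity remark and the holomorphic-quadratic-differential aside are fine but inessential.
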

\begin{proof}
    One can directly compute $\partial_{\bar{z}}(u_{zz}-u_{z}^{2})$. 
\end{proof}
By Theorem~\ref{thm(main). derivative growth rate}, we see $u_{zz}$ and $u_{z}$ are both of the growth rate $o(|z|)$, so $u_{zz}-u_{z}^{2}=o(|z|^{2})$. By the Liouville theorem for holomorphic functions, we conclude that
\begin{equation}
    u_{zz}-u_{z}^{2}=a z+b
\end{equation}
for some constants $a,b\in\mathbb{C}$. We claim that $a=0$ in the lemma below.
\begin{lemma}\label{lem. not linear}
    If $u_{zz}=o(|z|)$ and $u_{zz}-u_{z}^{2}=a z+b$, then $a=0$.
\end{lemma}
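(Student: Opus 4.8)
The plan is to argue by contradiction: suppose $a \neq 0$, and extract a growth contradiction from the ODE $u_{zz} - u_z^2 = az + b$. Write $p := u_z$, so $p$ satisfies the Riccati-type equation $p_z = p^2 + az + b$ as a function on $\mathbb{C}$ (here $p$ is merely $C^\infty$, not holomorphic, but the equation holds pointwise). The standard device is to linearize: set $p = -\psi_z/\psi$ for a suitable non-vanishing function $\psi$, which transforms the Riccati equation into the linear equation $\psi_{zz} + (az+b)\psi = 0$ — but since $p$ is not holomorphic this substitution does not directly produce a holomorphic $\psi$. Instead I would work directly with the real asymptotics: the hypothesis $u_{zz} = o(|z|)$ forces $p_z = o(|z|)$, hence $p^2 = p_z - (az+b) = -az - b + o(|z|)$, so $|p(z)|^2 \sim |a|\,|z|$ along every ray, i.e. $|u_z(z)| \sim \sqrt{|a|}\,|z|^{1/2}$ as $|z| \to \infty$.

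Next I would integrate this asymptotic to get a lower bound on $|u|$ that contradicts the $C^1$-growth bound already available. Along a ray $z = r e^{i\theta_0}$, we have $\partial_r u = 2\,\mathrm{Re}\big(e^{i\theta_0} u_z\big)$, and from $p^2 = -az - b + o(|z|)$ one can solve for the two branches of $p$: $p(re^{i\theta_0}) = \pm i \sqrt{a e^{i\theta_0}}\, r^{1/2} + o(r^{1/2})$, with a definite (continuous-in-$\theta_0$) choice of branch. For those directions $\theta_0$ where $\mathrm{Re}\big(e^{i\theta_0} \cdot (\pm i\sqrt{ae^{i\theta_0}})\big) \neq 0$ — and such directions exist since this real-analytic function of $\theta_0$ is not identically zero unless $a = 0$ — integrating $\partial_r u$ from a fixed radius gives $|u(re^{i\theta_0})| \gtrsim r^{3/2}$. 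Since Proposition~\ref{prop. C^1-estimate} (equivalently the second half of Theorem~\ref{thm(main). derivative growth rate}) gives $\|u\|_{L^\infty(B_R)} = O(R^{2-\epsilon})$ with $\epsilon > 0$, and $3/2 < 2-\epsilon$ need not hold, I should instead exploit that $\epsilon$ can be taken as whatever it is but then get the contradiction at the level of $Du$ rather than $u$: the bound $\|Du\|_{L^\infty(B_R)} = O(R^{1-\epsilon^2/2})$ has exponent $< 1$, which already contradicts $|u_z(z)| \sim \sqrt{|a|}\,|z|^{1/2}$? No — $1/2 < 1 - \epsilon^2/2$ for small $\epsilon$, so that is not immediate either. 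The cleaner route: iterate. From $|u_z| \sim C|z|^{1/2}$ one gets $u_{zz} - u_z^2 = az+b$ forces $u_{zz} = u_z^2 + az + b$, and since $u_z^2 \sim -az$, the leading terms cancel and $u_{zz} = o(|z|)$ is consistent — so no contradiction from magnitude alone; the contradiction must come from a \emph{global} obstruction, not a pointwise one.

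Therefore I would instead use a complex-analytic argument. Since $u_{zz} - u_z^2$ is entire and equals $az+b$, and writing $v = M - u \geq 0$ (as in Section~2), the function $h := e^{-v}$ — wait, the right object is this: set $g := u_z$; then $(e^{-g'... })$. Concretely: the Liouville formula gives $u = \ln\frac{2|f'|}{1+|f|^2}$ with developing map $f$, and the computation noted in the introduction gives $u_{zz} - u_z^2 = \frac12 \mathcal{S}(f) = P$. So $P(z) = az+b$ with $\deg P = 1$, whence by the Nevanlinna correspondence $f = w_1/w_2$ with $w_i'' + (az+b) w_i = 0$, an Airy-type equation, and the order of $f$ is $\rho(f) = 1 + \tfrac{d(P)}{2} = \tfrac32$. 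But then $A(r) = \tfrac{1}{4\pi}\int_{B_r} e^{2u}\,dx$ grows like $T(r) \approx r^{3/2}$, forcing $\int_{B_R} e^{2u} \gtrsim R^{3/2+o(1)}$ along a sequence — and this must be checked against $u \leq M$, which gives $\int_{B_R} e^{2u} \leq \pi e^{2M} R^2$, not a contradiction. The actual contradiction is with $u$ being bounded \emph{above}: solutions with $\deg P = 1$ (Airy) necessarily have $u$ unbounded above, because the developing map $f = w_1/w_2$ built from Airy functions has $f^\#$ unbounded in a way incompatible with $u \leq M$; equivalently, one shows directly that no solution of $\psi'' + (az+b)\psi = 0$ with $a \neq 0$ yields a globally upper-bounded $u$ via the Liouville formula, by analyzing the Stokes lines of the Airy equation where $f$ winds and $f^\#$ blows up. I expect \textbf{this last step — ruling out the Airy case by Stokes-line asymptotics of the developing map} — to be the main obstacle, since it requires the asymptotic integration theory that the paper is trying to avoid; the likely resolution in the authors' hands is more elementary, using only that $u_{zz} = o(|z|)$ together with a contour-integral / argument-principle computation showing $a z + b$ cannot be the value of $u_{zz} - u_z^2$ for an upper-bounded $u$ — for instance by integrating $u_z$ around a large circle and comparing with $\oint (u_{zz} - u_z^2)$, or by noting that $e^{-2\int u_z}$ would have to be single-valued. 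I would start from that integral identity: $\oint_{|z|=r} (u_{zz} - u_z^2)\,dz = \oint_{|z|=r} az\,dz = 0$ tells us nothing, but $\oint_{|z|=r} u_z\,dz$ relates to $\int_{B_r} u_{z\bar z} = \tfrac14 \int_{B_r}\Delta u = -\tfrac14\int_{B_r} e^{2u} < 0$, giving a real constraint that, combined with $|u_z| \sim |a|^{1/2} r^{1/2}$ and the growth of $\int e^{2u}$, should close the argument.
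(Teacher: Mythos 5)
Your proposal never lands on a working argument, and in fact most of it is you correctly recognizing that each of your attempted routes fails; the parts that do not fail are left as "should close the argument," which they do not. The paper's proof is a short parity argument with the argument principle: on $|z|=R$ with $R\gg 1$, the curve $C_R$ traced by $u_{zz}-az-b$ is dominated by $-az$ (since $u_{zz}=o(|z|)$), so its winding number about the origin is $1$; but $C_R$ is also the image of $|z|=R$ under $u_z^2$, and the image of a circle under a \emph{square} of a continuous function always has even winding number. Odd versus even is the contradiction.

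The irony is that you had the key ingredient in hand. Your observation that $p^2 = -az - b + o(|z|)$ with "a definite (continuous-in-$\theta_0$) choice of branch" is exactly the seed of the proof: as $\theta_0$ runs over $[0,2\pi]$, the quantity $-a e^{i\theta_0} R$ winds once around $0$, so any continuous square root of it would have to change sign — i.e. no globally continuous $p$ with $p^2\sim -az$ can exist on $|z|=R$. Instead of extracting that monodromy obstruction, you tried to integrate $|p|\sim\sqrt{|a|}\,r^{1/2}$ along individual rays (which, as you correctly noted, gives $|u|\gtrsim r^{3/2}$ and does not beat the $O(R^{2-\epsilon})$ bound), then detoured through Nevanlinna/Airy asymptotics (which you again correctly flagged as the theory the paper is trying to avoid), and finally proposed the identity $\oint_{|z|=r}u_z\,dz = \tfrac{i}{2}\int_{B_r}\Delta u\,dx$. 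That last identity gives only an upper bound $\big|\oint u_z\,dz\big|=O(r^{3/2})$ against the area bound $\int_{B_r}e^{2u}=O(r^{2-\epsilon})$, and since $\tfrac32$ need not exceed $2-\epsilon$, there is no contradiction there either. The missing idea is precisely the \emph{parity} of the winding number of $u_z^2$ versus the winding number $1$ forced by $-az$; without it the proposal does not prove the lemma.
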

\begin{proof}
Suppose $a\neq 0$, then look at the image of $|z|=R\gg1$ under $u_{zz}-az-b$, which we denote by $C_R$. Since $u_{zz}=o(|z|)$, one infers that the winding number of $C_R$ about origin is $1$. 
    However, $C_R$, as the image of $|z|=R$ under $u_z^2$, its winding number must be an even number, a contradiction. Hence $a=0$.
\end{proof}
\begin{remark}
    By Nevanlinna theory, there exists solutions $u$ (without an upper bound) to \eqref{eq. EqL} such that $u_{zz}-u_z^2=az+b$ for some nonzero constant $a$. In fact, such solution $u$ could be expressed in terms of the Airy function.
\end{remark}

\begin{proof}[Proof of Theorem \ref{thm. EGLX}]
The above arguments imply that $u_{zz}-u_z^2\equiv C_0$ for a constant $C_0\in \mathbb{C}$.

 If $C_0=0$, we define $v:=e^{-u}$. Then the equation $u_{zz}-u_z^2\equiv 0$ could be converted to $\partial_{x_1x_1}v=\partial_{x_2x_2}v$ and $\partial_{x_1x_2}v=0$. Hence we could deduce that
 \begin{align*}
     D^3v=0,
 \end{align*}
 and $v$ is a quadratic polynomial. Plug $v=e^{-u}$ into the Liouville equation \eqref{eq. EqL} and we get
 \begin{align*}
     v(x)=\frac{\lambda}{2}|x-x_0|^2+\frac{1}{2\lambda},\quad \text{for some } \lambda>0.
 \end{align*}
 This shows that $u(x)=\ln (\frac{2}{1+|x|^2})$ up to a normalization.
 
\vspace{1em}
 If $C_0\neq 0$, then up to a normalization, we could assume that $C_0=-\frac{1}{4}$. Define $v:=e^{-u}$. Then the equation $u_{zz}-u_z^2\equiv -\frac{1}{4}$ could be converted to 
 \begin{align}
         \partial_{x_1x_1}v-\partial_{x_2x_2}v=v,  \label{eq.3.1} \\
         \partial_{x_1x_2}v=0.\label{eq.3.2}
 \end{align}
 From \eqref{eq.3.2} we know that $v(x_1,x_2)=F(x_1)+G(x_2)$ for some 1-variable function $F$ and $G$. Plug it into \eqref{eq.3.1} and we get
 \begin{align*}
     F''(x_1)-F(x_1)=G''(x_2)+G(x_2).
 \end{align*}
 A simple ODE calculation gives that
 \begin{align*}
     v(x_1,x_2)=c_1e^{x_1}+c_2e^{-x_1}+c_3\sin x_2+c_4\cos x_2,
 \end{align*}
 for some constants $c_1,c_2,c_3,c_4\in\mathbb{R}$.

 Plug $v=e^{-u}$ into the Liouville equation \eqref{eq. EqL} and we could deduce that $4c_1c_2=1+c_3^2+c_4^2$. It follows that
 \begin{align*}
     v(x_1,x_2)=\frac{1}{2}e^{x_1+\ln(2c_1)}+\frac{1+t^2}{2}e^{-(x+\ln(2c_1))}+t\cos(x_2+\theta),
 \end{align*}
 where $t:=\sqrt{c_3^2+c_4^2}\geq 0$. This shows that $u(x_1, x_2)=\ln \left(\frac{2 e^{x_1}}{1+t^2+2 t e^{x_1} \cos x_2+e^{2 x_1}}\right)$.
\end{proof}

\section{The Liouville equation in the half plane}
In this section, we will prove Theorem~\ref{thm. half plane}. With boundary terms taken care of properly, we shall carry out similar computations as in the full plane case, so the presentation will be a little sketchy.

Assume that $u\leq M$ in the whole $\mathbb{R}^{2}_{+}$, then we set $v=M-u\geq0$. It satisfies
\begin{equation}\label{eq. half plane: eql}
    \Delta v=K e^{2M}e^{-2v}=:\bar{K}e^{-2v},\quad \bar{\lambda}\leq\bar{K}\leq\bar{\Lambda}.
\end{equation}
The boundary condition in \eqref{eq. half plane: boundary condition} implies
\begin{equation}\label{eq. half plane: boundary inequality}
    \Big|\frac{\partial v}{\partial x_{2}}\Big|(x_{1},0)\leq|\kappa e^{M}|=:\bar{\kappa}.
\end{equation}

\subsection{Area growth estimate in the half plane}
We first derive an area growth estimate in the half plane setting. Similar to the situation for the whole space, we make the following notations:
\begin{itemize}
    \item $\mathbb{R}^{2}_{+}$: the half plane defined as $\mathbb{R}\times\mathbb{R}_{+}$.
    \item $\partial\mathbb{R}^{2}_{+}$: the $x$-axis $\mathbb{R}\times\{0\}$.
    \item $B_{R}(x)^{+}$: $B_{R}(x)^{+}=B_{R}(x)\cap\mathbb{R}^{2}_{+}$.
    \item $B_{R}^{+}$: $B_{R}^{+}=B_{R}(0)^{+}$.
    \item We denote $Q_{R}(x)$ for $x=(x_{1},x_{2})$ again by
    \begin{equation}
        Q_{R}(x):=x+[-\frac{R}{2},\frac{R}{2}]^{2},
    \end{equation}
    but this time, we require $x_{2}\geq\frac{R}{2}$ so that $Q_{R}(x)\subseteq\overline{\mathbb{R}^{2}_{+}}$;
    \item In a region $\Omega\subseteq\overline{\mathbb{R}^{2}_{+}}$, we denote the density by
    \begin{equation}
        \mathcal{D}(\Omega):=\frac{1}{|\Omega|}\int_{\Omega}e^{-2v}dx;
    \end{equation}
    \item We define the global density with scale $R$ as
    \begin{equation}
        \delta(R)=\sup_{Q_{R}(x)\subseteq\overline{\mathbb{R}^{2}_{+}}}\mathcal{D}(Q_{R}(x)).
    \end{equation}
\end{itemize}
We would like to show the following area density estimate.
\begin{proposition}\label{prop. half plane: density estimate}
    Assume that $v:\mathbb{R}^{2}_{+}\to[0,\infty)$ satisfies \eqref{eq. half plane: eql}-\eqref{eq. half plane: boundary inequality}
    Then there exists $N\in2\mathbb{Z}^{+}+1$ depending only on $(\bar{K},\bar{\kappa})$, such that for any $k\geq0$,
    \begin{equation}
        \delta(N^{k})\leq(1-\frac{1}{2N^{2}})^{k}.
    \end{equation}
\end{proposition}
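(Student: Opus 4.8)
The plan is to mimic the induction from the proof of Lemma~\ref{lem. simplified density estimate} in the full-plane case, paying extra attention to the two new features of the half-plane setting: squares $Q_R(x)$ are only allowed when $x_2\geq R/2$, so near the boundary $\partial\mathbb{R}^2_+$ one cannot subdivide freely, and the operator comes with a Neumann-type boundary condition \eqref{eq. half plane: boundary inequality} rather than no boundary at all. As before I set $\sigma=1-\frac{1}{2N^2}$, note $\sigma\geq 1/e\geq 2/N^2$, and argue by induction on $k$: the case $k=0$ is immediate from $v\geq 0$ (so $e^{-2v}\leq 1$, hence $\delta(R)\leq 1$), and assuming $\delta(N^i)\leq\sigma^i$ for $i=0,\dots,k$ I want to show $\mathcal{D}(Q_{N^{k+1}}(x))\leq\sigma^{k+1}$ for every admissible $x$, i.e.\ every $x$ with $x_2\geq N^{k+1}/2$. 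Such a $Q_{N^{k+1}}(x)$ lies entirely in the open half plane at distance $\geq 0$ from the boundary; if in fact $x_2 > N^{k+1}/2$ strictly, or more generally if the square sits far enough inside, the grid subdivision $Q_{N^{k+1}}(x)=\bigcup_{(i,j)\in\mathcal{G}}\widetilde Q_{ij}$ into $N^2$ sub-squares of side $N^k$ is legitimate and the full-plane argument goes through verbatim once the Harnack and maximum-principle inputs are replaced by their half-plane analogues.

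The key step is therefore to establish half-plane versions of Lemma~\ref{lem. self similar harnack} and Lemma~\ref{lem. maximum principle} valid up to the boundary. For the Harnack part, the standard device is reflection: given the Neumann-type bound $|\partial_{x_2}v|(x_1,0)\leq\bar\kappa$, one does not have exact evenness, but one can either (a) even-reflect $v$ across $\{x_2=0\}$ and absorb the jump in the normal derivative into a bounded measure supported on the boundary line — contributing an extra $L^1$ mass on $\partial\mathbb{R}^2_+\cap Q_{3N^k}$ of size $O(\bar\kappa N^k)$, which is lower order compared to the $N^{2k}\sigma^k$ term — or (b) subtract off the explicit linear (or harmonic) correction $\bar\kappa x_2$ (or a suitable harmonic function with the right normal data) so that the remainder has (near-)zero Neumann data and reflects cleanly, then put the correction back at the end. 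Either way one obtains, for $y$ in a half-plane square $Q_{N^k}(x)$ and $x$ possibly on the boundary, an estimate of the form $w(y)\leq C(w(x)+N^{2k}\sigma^k\ln N + \bar\kappa N^k)$, and since $\bar\kappa N^k = o(N^{2k}\sigma^k)$ the extra term is harmless. The same reflection handles the maximum-principle Lemma~\ref{lem. maximum principle}: working on a box $Q_{N^{k+1}}(x)$ whose bottom side may lie on $\partial\mathbb{R}^2_+$, reflect to a genuine box $Q_{N^{k+1}}\cup (\text{mirror})$ of side $N^{k+1}$ in the full plane (or use the half-ball Green function with Neumann condition on the flat part), and the computation $w(x)\leq \max_{\partial}w - c\,\bar\lambda\sigma^k N^{2k+2}$ survives with the boundary contribution again lower order; the constant $s\sim \frac{\sqrt2}{4}N$ and the bound $\sum_{t=1}^s t\ln\frac{\sqrt2(2t+1)}{N}\leq -\frac18(s+1)^2+\text{l.o.t.}$ are unchanged up to universal constants.

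With these two lemmas in hand the contradiction argument is copied: subdivide $Q_{N^{k+1}}(x)$ into $N^2$ sub-squares; if some sub-square has density $\leq\sigma^k/2$ then averaging gives $\mathcal{D}(Q_{N^{k+1}}(x))\leq\frac{1}{N^2}(\frac{\sigma^k}{2}+(N^2-1)\sigma^k)=\sigma^{k+1}$ and we are done; otherwise every sub-square has density in $[\sigma^k/2,\sigma^k]$, which forces (via the "there is a point $y$ with $v(y)\leq k+1$ in each sub-square" observation, plus the half-plane Harnack) the bound $\max_{\partial Q_{N^{k+1}}(x)}v\leq C_1(k+1+\bar\Lambda N^{2k}\sigma^k\ln N + \bar\kappa N^k)$, while the half-plane maximum principle applied to $v/\bar\lambda$ gives $v(\text{center})\leq \max_{\partial Q_{N^{k+1}}(x)}v - c_3\bar\lambda\sigma^k N^{2k+2}$; combining with $v\geq 0$ and dividing by $N^{2k}\sigma^k$ yields $\frac{k+1}{(N^2-1/2)^k}+\bar\Lambda\ln N + \bar\kappa N^{-k}\cdot(\cdots)\geq c_4\bar\lambda N^2$, which fails for all $k\geq 0$ once $N$ is chosen large depending only on $(\bar\lambda,\bar\Lambda,\bar\kappa)$. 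I expect the main obstacle to be purely bookkeeping near the boundary: making precise which boxes $Q_{N^{k+1}}(x)$ are admissible at step $k+1$ (one needs $x_2\geq N^{k+1}/2$, so the sub-boxes $\widetilde Q_{ij}$ at height index $j=\frac{1-N}{2}$ have bottom exactly on or above $\{x_2=0\}$ — hence the subdivision stays inside $\overline{\mathbb{R}^2_+}$, good), and checking that the reflection trick produces a right-hand side whose $L^1$ mass still satisfies hypothesis (2) of the reflected Harnack/maximum-principle lemmas with $\sigma$ and $N$ unchanged — the boundary defect is genuinely lower order, but this must be verified, since it is the only place the constant $\bar\kappa$ enters the choice of $N$.
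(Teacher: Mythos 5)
Your overall skeleton — same induction on $k$, same contradiction hypothesis, same $\sigma=1-\frac{1}{2N^2}$, same division by $N^{2k}\sigma^k$ at the end — matches the paper exactly, and you are right that the only genuinely new input is a half-plane Harnack estimate replacing Lemma~\ref{lem. self similar harnack}. Where you diverge from the paper is \emph{how} that half-plane Harnack is proved, and whether the maximum-principle input (Lemma~\ref{lem. maximum principle}) also needs modification.

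On the Harnack: you propose even reflection across $\{x_2=0\}$ and then absorb the distributional jump $2\,\partial_{x_2}v\,\delta_{\{x_2=0\}}$, whose $\gamma_R$-weighted mass is $O(\bar\kappa N^k)$, into the $N^{2k}\sigma^k\ln N$ error term. This is a sound plan, and your observation that $\bar\kappa N^k=o(N^{2k}\sigma^k)$ (since $N\sigma>1$) is exactly the right reason it works. The paper instead proves a direct half-plane Harnack (Lemma~\ref{lem. half space: self similar harnack}) by a six-step chaining of mean-value identities on half-balls, using $\gamma_R$ with an explicit $\frac{2R}{3\pi}\max|\partial_{x_2}w|$ boundary term. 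The two routes carry the same content; the paper's has the small advantage of never needing to re-verify hypothesis (2) of Lemma~\ref{lem. self similar harnack} for squares straddling the reflection axis (your loose end at the end of your proposal), since its Harnack lemma only ever refers to admissible half-plane squares. A second, smaller difference: the paper dispatches the subdivision of $Q_{N^{k+1}}(x)$ by classifying sub-squares as ``high'' ($x_2\geq\frac{3}{2}N^k$, where the original full-plane Harnack with Remark~\ref{rmk. self similar harnack domain} applies directly) or ``low'' ($x_2\leq 3N^k$, where the new half-plane Harnack with its elongated rectangle $[-N^k/2,N^k/2]\times[0,3N^k]$ applies); you skip this case analysis but would need some version of it to apply your reflected Harnack only where it is actually needed.

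On the maximum principle: you propose to also reflect for the analogue of Lemma~\ref{lem. maximum principle}, but this is actually unnecessary, and the paper explicitly says this step ``can be directly copied.'' The reason is that the square $Q_{N^{k+1}}(x)$ is always admissible, so its center sits at height $x_2\geq N^{k+1}/2$; the Green's function argument in Lemma~\ref{lem. maximum principle} is entirely interior to that square and never sees $\partial\mathbb{R}^2_+$ or the Neumann condition. The density lower bound it needs at scale $N^k$ is supplied, within $Q_{N^{k+1}}(x)$, by the contradiction hypothesis on the sub-squares $\widetilde Q_{ij}$, all of which are admissible. So your reflection here is superfluous extra work, though not an error.

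In short: correct strategy and correct understanding of why the boundary contribution is harmless, but a different (reflection-based) proof of the key half-plane Harnack, one extra unneeded reflection for the maximum principle, and a missing ``high''/``low'' dichotomy that the paper uses to glue the two Harnack inputs together.
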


Like in Section~\ref{Section. essential tools}, we choose the same integration kernel $\gamma_{R}(x)$ as in \eqref{kernel for MVP} (up to a translation). Using integration by parts, we see for any $z\in\partial\mathbb{R}^{2}_{+}$,
\begin{equation}\label{eq. MVP center origin}
   \frac{1}{2}\Big|\mathop{avg}\limits_{B_{R}(z)^{+}}w-w(z)\Big|\leq\int_{B_{R}(z)^{+}}\gamma_{R}|\Delta w|+\frac{2R}{3\pi}\cdot\max_{B_{R}(z)\cap\partial\mathbb{R}^{2}_{+}}\Big|\frac{\partial w}{\partial x_{2}}\Big|.
\end{equation}
If we assume $w\geq0$ and let $x,y\in\mathbb{R}^{2}_{+}$, then we have
\begin{align}
    &\frac{1}{2}\mathop{avg}\limits_{B_{R}(x)^{+}}w-w(x)\leq\int_{B_{R}(x)^{+}}\gamma_{R}|\Delta w|+\frac{2R}{3\pi}\cdot\max_{B_{R}(x)\cap\partial\mathbb{R}^{2}_{+}}\Big|\frac{\partial w}{\partial x_{2}}\Big|,\label{eq. MVP center high 1}\\
    &w(y)-\mathop{avg}\limits_{B_{R}(y)^{+}}w\leq\int_{B_{R}(y)^{+}}\gamma_{R}|\Delta w|+\frac{2R}{3\pi}\cdot\max_{B_{R}(y)\cap\partial\mathbb{R}^{2}_{+}}\Big|\frac{\partial w}{\partial x_{2}}\Big|+\max_{B_{R}(y)\cap\partial\mathbb{R}^{2}_{+}}w.\label{eq. MVP center high 2}
\end{align}

Next, we present the following half-plane version of Lemma~\ref{lem. self similar harnack}:
\begin{lemma}\label{lem. half space: self similar harnack}
    Let $N\in2\mathbb{Z}^{+}+1$ and $\sigma\in(2/N^{2},1)$. Let $k\geq0$ be an arbitrary integer. Assume that $w:\mathbb{R}^{2}_{+}\to[0,\infty)$ satisfies the boundary condition
    \begin{equation}
        \Big|\frac{\partial w}{\partial x_{2}}\Big|\leq1\mbox{ on }\partial\mathbb{R}^{2}_{+}
    \end{equation}
    and satisfies the equation $\Delta w=f$ such that:
    \begin{itemize}
        \item[(1)] $|f|\leq1$ everywhere in $\mathbb{R}^{2}$;
        \item[(2)] For any $Q_{N^{i}}(x)\subseteq\overline{\mathbb{R}^{2}_{+}}$ and any $i\in\{0,1,\cdots,k\}$,
        \begin{equation}
            \frac{1}{N^{2i}}\int_{Q_{N^{i}}(x)}|f|dx\leq\sigma^{i}.
        \end{equation}
    \end{itemize}
    Then there exists a universal constant $C$ (independent of $(N,\sigma,k)$), such that for any $x,y\in[-N^{k}/2,N^{k}/2]\times[0,3N^{k}]$, we have
    \begin{equation}\label{eq. half plane: Harnack formula}
        w(y)\leq C\Big(w(x)+N^{2k}\sigma^{k}\ln{N}\Big).
    \end{equation}
\end{lemma}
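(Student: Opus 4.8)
The plan is to mimic the proof of Lemma~\ref{lem. self similar harnack}, but with the half-plane mean value identities \eqref{eq. MVP center high 1} and \eqref{eq. MVP center high 2} replacing \eqref{mean value property}, and to take care of the extra boundary terms that appear. As before, it suffices to prove a claim: there is a universal $C_1$ such that for any center $z$ with $\operatorname{dist}(z,\partial\mathbb{R}^2_+)$ comparable to $N^k$ and any $R\in[N^k/\sqrt2,\sqrt2 N^k]$ (and also for $z\in\partial\mathbb{R}^2_+$, using \eqref{eq. MVP center origin}), one has $|\operatorname{avg}_{B_R(z)^+}w-w(z)|\le C_1 N^{2k}\sigma^k\ln N$, together with a bound $\max_{B_{N^k}\cap\partial\mathbb{R}^2_+}w\le C_1 N^{2k}\sigma^k\ln N$ controlling the last term in \eqref{eq. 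MVP center high 2}. Then one chains the averages: given $x,y\in[-N^k/2,N^k/2]\times[0,3N^k]$, connect $w(x)$ to $w(y)$ through finitely many (at most three or four) overlapping balls of radius $\sim N^k$ whose centers lie in the strip, possibly routing through a boundary point, and sum the resulting $O(N^{2k}\sigma^k\ln N)$ errors.

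The core estimate is the bound on $\int_{B_R(z)^+}\gamma_R|f|$. This is essentially identical to the full-plane computation: $\gamma_R$ is supported (as a positive function) in $B_R(z)\subseteq Q_{3N^k}(\tilde z)$ for an appropriate enlargement, and on the annular region $Q_{N^i}\setminus Q_{N^{i-1}}$ one has $\gamma_R\le C(k-i+1)\ln N$ by \eqref{singular integration rate}, while hypothesis (2) gives $\int_{Q_{N^i}}|f|\le N^{2i}\sigma^i$; summing $\sum_i (k-i+2)N^{2i}\sigma^i\ln N\le C N^{2k}\sigma^k\ln N$ using $N^2\sigma\ge2$. The only subtlety is that the squares $Q_{N^i}$ in hypothesis (2) must be contained in $\overline{\mathbb{R}^2_+}$; near the boundary one replaces the dyadic square stratification by a stratification into half-squares or into squares sitting at height $\ge N^i/2$, and since the portion of $B_R(z)^+$ within distance $N^i$ of the boundary can still be covered by $O(1)$ admissible squares of side $N^i$, the sum is unaffected up to the universal constant. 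The new ingredient is the boundary term $\frac{2R}{3\pi}\max_{B_R(z)\cap\partial}|\partial w/\partial x_2|\le \frac{2R}{3\pi}\cdot 1 = O(N^k)$, which is far smaller than $N^{2k}\sigma^k\ln N$ (recall $N^2\sigma\ge2$, so $N^{2k}\sigma^k\ge 2^k\gg N^k$ for $N,k$ in the relevant range — and in any case $N^{2k}\sigma^k=(N^2\sigma)^k\ge 2^k$ grows while $N^k$ also grows but the prefactor comparison $N^{2k}\sigma^k\ln N\gtrsim N^k$ holds once $k\ge1$, and for $k=0$ both sides are $O(1)$), so it is harmlessly absorbed.

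For the remaining term $\max_{B_{N^k}\cap\partial\mathbb{R}^2_+}w$ in \eqref{eq. MVP center high 2}: I would first establish, using \eqref{eq. MVP center origin} applied at boundary points with the argument above, that $w$ restricted to $B_{2N^k}\cap\partial\mathbb{R}^2_+$ is comparable (up to additive $O(N^{2k}\sigma^k\ln N)$) to its average over $B_{\sqrt2 N^k}(z)^+$ for $z$ the relevant boundary point, and then relate that half-ball average to $w(x)$ for the original interior point via \eqref{eq. MVP center high 1}. In other words, the boundary maximum is itself controlled by $w(x)+O(N^{2k}\sigma^k\ln N)$, which is exactly what is needed. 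The main obstacle I anticipate is purely bookkeeping: verifying that every ball/square used in the chaining actually lies in $\overline{\mathbb{R}^2_+}$ and satisfies the admissibility requirement $x_2\ge R/2$ built into the definition of $Q_R(x)$ in the half-plane, and organizing the finitely many hops (interior-to-interior, interior-to-boundary, boundary-to-boundary) so that each hop's two half-balls genuinely overlap in a set of measure comparable to $N^{2k}$. None of this requires a new idea beyond what is already in Lemma~\ref{lem. self similar harnack}; it is the geometric adaptation to the half-space that must be done with care. I expect the statement to follow with universal constants, independent of $(N,\sigma,k)$ as claimed.
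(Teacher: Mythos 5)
Your proposal takes essentially the same approach as the paper: chain averages over overlapping half-balls using the mean value inequalities \eqref{eq. MVP center origin}--\eqref{eq. MVP center high 2}, reuse the stratified bound on $\int\gamma_R|f|$ from the proof of Lemma~\ref{lem. self similar harnack}, and control the boundary maximum in \eqref{eq. MVP center high 2} by applying the same averaging at the relevant boundary points (the paper fixes this chain as six explicit steps passing through the origin at scales $N^k$ and $5N^k$). One slip worth correcting: you write $2^k\gg N^k$, which is backwards for $N\geq 3$; the absorption of the $O(N^k)$ boundary-derivative term into $O(N^{2k}\sigma^k\ln N)$ actually requires $N\sigma\gtrsim 1$, which $\sigma>2/N^2$ alone does not supply, though the paper's Step~1 tacitly makes the same absorption and it does hold in the sole application (Proposition~\ref{prop. half plane: density estimate}, where $\sigma=1-\frac{1}{2N^2}$ gives $N\sigma\geq N/2$).
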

\begin{proof}
    The key is to first estimate $w(0)$ in Step 1-3 and then estimate $w(y)$ in Step 4-6.
    \begin{itemize}
        \item Step 1: We set $R=5N^{k}$ in \eqref{eq. MVP center high 1} and have
        \begin{equation}
            \mathop{avg}\limits_{B_{5N^{k}}(x)^{+}}w\leq 2w(x)+C_{1}N^{2k}\sigma^{k}\ln{N}+\frac{20N^{k}}{3\pi}\leq C_{2}\Big(w(x)+N^{2k}\sigma^{k}\ln{N}\Big).
        \end{equation}
        \item Step 2: We notice that definitely $B_{N^{k}}^{+}\subseteq B_{5N^{k}}(x)^{+}$, so
        \begin{equation}
            \mathop{avg}\limits_{B_{N^{k}}^{+}}w\leq C_{3}\Big(w(x)+N^{2k}\sigma^{k}\ln{N}\Big).
        \end{equation}
        \item Step 3: We apply \eqref{eq. MVP center origin} with $R=N^{k}$ and obtain an upper bound of $w(0)$:
        \begin{equation}
            w(0)\leq C_{4}\Big(w(x)+N^{2k}\sigma^{k}\ln{N}\Big).
        \end{equation}
        \item Step 4: We apply \eqref{eq. MVP center origin} with $R=5N^{k}$ and obtain
        \begin{equation}
            \mathop{avg}\limits_{B_{5N^{k}}^{+}}w\leq C_{5}\Big(w(x)+N^{2k}\sigma^{k}\ln{N}\Big).
        \end{equation}
        \item Step 5: As $B_{N^{k}}(y)^{+}\subseteq B_{5N^{k}}^{+}$, we have
        \begin{equation}
            \mathop{avg}\limits_{B_{N^{k}}(y)^{+}}w\leq C_{6}\Big(w(x)+N^{2k}\sigma^{k}\ln{N}\Big).
        \end{equation}
        \item Step 6: Finally, we apply \eqref{eq. MVP center high 2} with $R=N^{k}$ and obtain
        \begin{equation}
            w(y)\leq C_{7}\Big(w(x)+N^{2k}\sigma^{k}\ln{N}\Big).
        \end{equation}
    \end{itemize}
    The details are omitted as we just repeat the computations in the proof of Lemma~\ref{lem. self similar harnack}.
\end{proof}

\begin{proof}[Proof of Proposition~\ref{prop. half plane: density estimate}]
    We only need to prove \eqref{eq. contradiction 1} in the half-plane setting, while all other steps in the proof of Lemma~\ref{lem. simplified density estimate} can be directly copied here.

    Let $\widetilde{Q}_{ij}$ be a a sub-square defined in \eqref{eq. subdivision of large square}. We say $\widetilde{Q}_{ij}$ is "high", if
    \begin{equation}
        \widetilde{Q}_{ij}\subseteq\{x_{2}\geq\frac{3}{2}N^{k}\}.
    \end{equation}
    We say $\widetilde{Q}_{ij}$ is "low", if
    \begin{equation}
        \widetilde{Q}_{ij}\subseteq\{x_{2}\leq3N^{k}\}.
    \end{equation}
    
    Clearly any sub-square $\widetilde{Q}_{ij}$ is either "high" or "low" (or both), so \eqref{eq. contradiction 1} is obtained from applying Lemma~\ref{lem. self similar harnack} to "high" sub-squares (see Remark~\ref{rmk. self similar harnack domain}), and applying Lemma~\ref{lem. half space: self similar harnack} to "low" sub-squares.
\end{proof}
\subsection{Schauder estimates in the half plane}
In this subsection, we prove the half-plane version of Schauder estimates.

We first provide an ABP estimate in the half plane.
\begin{lemma}\label{lem. ABP}
    Assume that $w:\overline{B_{R}^{+}}\to\mathbb{R}$ vanishes at $\partial B_{R}\cap\mathbb{R}^{2}_{+}$, then
    \begin{equation}
        \|w\|_{L^{\infty}(\overline{B_{R}^{+}})}\leq CR\Big(\|\Delta w\|_{L^{2}(B_{R}^{+})}+\|\frac{\partial w}{\partial x_{2}}\|_{L^{\infty}(B_{R}\cap\partial\mathbb{R}^{2}_{+})}\Big).
    \end{equation}
\end{lemma}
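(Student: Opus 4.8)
\textbf{Proof proposal for Lemma~\ref{lem. ABP}.}
The plan is to reduce the half-plane ABP estimate to the classical (interior) ABP estimate on a disc by a reflection/extension argument, absorbing the Neumann-type boundary data into an auxiliary term. Concretely, I would first reduce to the case where $\frac{\partial w}{\partial x_2}$ vanishes on $B_R\cap\partial\mathbb{R}^2_+$: let $a:=\|\tfrac{\partial w}{\partial x_2}\|_{L^\infty(B_R\cap\partial\mathbb{R}^2_+)}$ and set $\widetilde w:=w+a\,x_2$ (or $w+a(x_2-R)$ so as to control the sign). Then $\Delta\widetilde w=\Delta w$, the new function still vanishes on $\partial B_R\cap\mathbb{R}^2_+$ up to an error of size $aR$, and $\frac{\partial \widetilde w}{\partial x_2}$ on the flat boundary has the favorable sign; the extra term costs at most $CaR$ in the $L^\infty$ bound, which is exactly the second term on the right-hand side. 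So it suffices to prove the estimate under the extra hypothesis that $\frac{\partial w}{\partial x_2}\le 0$ (or $=0$) on $B_R\cap\partial\mathbb{R}^2_+$.

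Next I would even-reflect across $\{x_2=0\}$: define $W(x_1,x_2):=w(x_1,|x_2|)$ on $B_R$. When $\frac{\partial w}{\partial x_2}=0$ on the flat part, $W\in C^1$ and in the distributional sense $\Delta W = g$ where $g(x_1,x_2)=\Delta w(x_1,|x_2|)\in L^2(B_R)$ with $\|g\|_{L^2(B_R)}=\sqrt2\,\|\Delta w\|_{L^2(B_R^+)}$; moreover $W$ vanishes on all of $\partial B_R$. Applying the classical ABP / Alexandrov maximum principle in two dimensions (where the critical Lebesgue exponent is $n=2$, so the $L^2$ norm of the Laplacian is precisely the right quantity), one gets $\|W\|_{L^\infty(B_R)}\le CR\|g\|_{L^2(B_R)}$, hence $\|w\|_{L^\infty(B_R^+)}\le CR\|\Delta w\|_{L^2(B_R^+)}$. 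Combining with the reduction step yields the claimed inequality. If one is uneasy about reflecting an inequality on $\frac{\partial w}{\partial x_2}$, the cleanest route is: after the shift $\widetilde w=w+a x_2$ we have $\Delta\widetilde w \le |\Delta w|$ in $B_R^+$ and $\frac{\partial\widetilde w}{\partial x_2}\le 0$ on the flat boundary, so reflecting gives a function $W$ with $\Delta W\le |g|$ in $B_R$ in the viscosity/distributional sense and $W\le 0$ on $\partial B_R$; the one-sided ABP estimate then controls $\sup_{B_R} W^+$ by $CR\||g|\chi_{\{W=\Gamma_W\}}\|_{L^2}$, and a symmetric argument with $-w$ handles the lower bound.

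The only genuinely delicate point is justifying that the even reflection is admissible for the ABP machinery: the reflected function $W$ is merely $W^{1,\infty}$ across the interface, not $C^2$, so one must use the Aleksandrov–Bakelman–Pucci estimate in its form valid for functions in $W^{2,n}_{loc}$ off a lower-dimensional set (here $n=2$), or argue by approximation — mollify $w$ in $B_R^+$ staying away from the corner $\partial B_R\cap\partial\mathbb{R}^2_+$, reflect, apply ABP to the smooth approximants, and pass to the limit. The corner itself is harmless because $w$ is assumed to vanish on the curved part of $\partial B_R^+$, so no compatibility condition is violated there. I expect this reflection-regularity bookkeeping to be the main (entirely routine) obstacle; everything else is the standard two-dimensional ABP inequality, whose scaling in $R$ gives the factor $R$ and whose natural norm on the right-hand side is exactly $\|\Delta w\|_{L^2}$.
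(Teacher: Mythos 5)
Your reflection strategy is a genuinely different route from the paper's. The paper proves the estimate directly in the half-disk by a one-shot touching argument: normalize $R=1$ and $\|\Delta w\|_{L^2}+\|\partial_{x_2}w\|_{L^\infty(\{x_2=0\})}\leq 1$, suppose $w(z)\leq -H$, and slide up linear functions $L_{\vec v,h}(x)=\vec v\cdot x+h$ with slopes in the set $\{|\vec v|<H/2,\ v_2>1\}$. The constraint $|\vec v|<H/2$ forces the contact point off the curved boundary (where $w=0$), while the constraint $v_2>1$ forces it off $\{x_2=0\}$ (where $|\partial_{x_2}w|\leq 1$); the contact point is therefore interior, and the gradient map covers a region of area $\sim H^2$, which is then bounded by $\frac14\|\Delta w\|_{L^2}^2$ via the area formula and AM--GM. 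No reflection is needed and the Neumann bound enters only through the slope restriction $v_2>1$. Your proof instead shifts by $\pm a x_2$ to sign the Neumann data, even-reflects, and invokes the interior two-dimensional ABP on the full disc. Both arguments share the same underlying mechanism --- arranging that the contact set misses the flat boundary --- but the paper's version is self-contained and avoids the regularity delicacies of the reflection, while yours reduces to an off-the-shelf theorem.

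One caveat: the signs in your reduction step are not consistent. With $\widetilde w=w+ax_2$ you get $\partial_{x_2}\widetilde w\geq 0$ on $\{x_2=0\}$ (not $\leq 0$ as you wrote twice), which after even reflection yields a convex kink and hence a \emph{nonnegative} singular contribution to $\Delta\widetilde W$; this gives a subsolution inequality $\Delta\widetilde W\geq g$ and controls $\sup\widetilde W$. For the lower bound one needs the opposite shift $\widetilde w=w-ax_2$, giving $\partial_{x_2}\widetilde w\leq 0$, a supersolution, and control of $\inf\widetilde W$. Your final paragraph, which claims $\Delta W\leq |g|$ controls $\sup W^+$, has this exactly backwards. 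This is fixable but it is precisely the kind of sign bookkeeping that makes the reflection route less transparent than the paper's direct argument. Also note that the cleanest way to resolve the regularity issue you flag is not approximation but the observation you half-make: after the (correctly signed) shift, $\widetilde W$ has a convex kink at $\{x_2=0\}$, so no $C^2$ function can touch it from above there, hence the concave-envelope contact set lies in the open set where $\widetilde W$ inherits the $W^{2,2}$ regularity of $w$ --- which is exactly the half-plane-avoiding mechanism the paper encodes by $v_2>1$.
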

\begin{proof}
    For simplicity let's assume $R=1$ and
    \begin{equation}
        \|\Delta w\|_{L^{2}(B_{1}^{+})}+\|\frac{\partial w}{\partial x_{2}}\|_{L^{\infty}(B_{1}\cap\partial\mathbb{R}^{2}_{+})}\leq1.
    \end{equation}
    
    We argue by contradiction. Pick an arbitrary point $z\in\overline{B_{1}^{+}}$, and suppose that $w(z)\leq-H$ (or $\geq H$) for some large $H$.
    Let $\vec{v}=(v_{1},v_{2})$ be a vector satisfying
    \begin{equation}
        |\vec{v}|<H/2,\quad v_{2}>1.
    \end{equation}
    We start from $h=-\infty$ and increase $h$ till the linear function
    \begin{equation}
        L_{\vec{v},h}(x)=\vec{v}\cdot x+h
    \end{equation}
    touches the graph of $w$ for the first time. Let $y\in\overline{B_{1}^{+}}$ be a contact point, so that
    \begin{equation}
        w(x)\geq L_{\vec{v},h}(x)\mbox{ in }\overline{B_{1}^{+}},\quad w(y)=L_{\vec{v},h}(y).
    \end{equation}

    It's not hard to verify the following facts:
    \begin{itemize}
        \item $y$ is an interior point of $B_{1}^{+}$;
        \item $D^{2}w(y)$ is positive-semi-definite and $\det{D^{2}w(y)}\leq(\Delta w)^{2}/4$.
    \end{itemize}
    Therefore, we reach a contradiction if $H$ is too large, since otherwise
    \begin{equation}
        1\geq\|\Delta w\|_{L^{2}(B_{1}^{+})}^{2}\geq4\Big|\{\vec{v}:|\vec{v}|<H/2,\ v_{2}>1\}\Big|\geq\frac{\pi}{3}H^{2}.
    \end{equation}
\end{proof}

We next present a $C^{\alpha}$ Schauder estimate.
\begin{lemma}\label{lem. half space: Schauder 0}
    Assume that $w:B_{R}^{+}\to\mathbb{R}$ satisfies the equation
    \begin{equation}
        \Delta w=f,\quad\frac{\partial w}{\partial x_{2}}\Big|_{B_{R}\cap\partial\mathbb{R}^{2}_{+}}=g.
    \end{equation}
    Then for any $\alpha\in(0,1)$, there exists a constant $C=C(\alpha)$, such that
    \begin{equation}
        R^{\alpha}[w]_{C^{\alpha}(B_{R/2}^{+})}\leq C\Big(R\|f\|_{L^{2}(B_{R}^{+})}+R\|g\|_{L^{\infty}(B_{R}\cap\partial\mathbb{R}^{2}_{+})}+\|w\|_{L^{\infty}(B_{R}^{+})}\Big).
    \end{equation}
\end{lemma}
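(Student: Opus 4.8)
The plan is to reduce the boundary-value problem with a Neumann condition to an interior estimate by a reflection argument, exactly as one does for the Laplacian on a half-space. First I would, after rescaling, assume $R=2$ and normalize $\|f\|_{L^2(B_2^+)}+\|g\|_{L^\infty(B_2\cap\partial\mathbb{R}^2_+)}+\|w\|_{L^\infty(B_2^+)}\leq 1$, so the goal becomes $[w]_{C^\alpha(B_1^+)}\leq C(\alpha)$. The natural move is to kill the inhomogeneous Neumann data: pick a fixed smooth function $\phi(x_1,x_2)=x_2\,\eta(x)$ with $\eta$ a cutoff equal to $1$ near $B_1$ and supported in $B_2$, and replace $w$ by $\widetilde w:=w-g_0\phi$ where $g_0$ is a constant (or a harmless linear-in-$x_1$ extension of $g$) chosen so that $\partial_{x_2}\widetilde w=0$ on a neighbourhood of $B_1\cap\partial\mathbb{R}^2_+$; this changes $f$ only by a bounded $L^\infty$ term and changes $\|w\|_{L^\infty}$ by a controlled amount. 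Since here $g$ is merely $L^\infty$, the cleanest route is actually to not try to make the Neumann data vanish identically but only to subtract off the contribution of $g$ via the explicit Neumann Green's function of the half-disc (or half-plane), writing $\widetilde w=w-Pg$ where $Pg$ is the single-layer-type potential solving $\Delta(Pg)=0$ in $B_2^+$ with $\partial_{x_2}(Pg)=g$ on the flat part and $Pg=0$ on the curved part; standard potential estimates give $[Pg]_{C^\alpha(B_1^+)}\leq C\|g\|_{L^\infty}$ for any $\alpha<1$.

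Once the Neumann data is homogeneous, $\widetilde w$ satisfies $\Delta\widetilde w=\widetilde f$ in $B_{3/2}^+$ with $\partial_{x_2}\widetilde w=0$ on the flat boundary and $\|\widetilde f\|_{L^2}\lesssim 1$, $\|\widetilde w\|_{L^\infty}\lesssim 1$. Now extend $\widetilde w$ by even reflection across $\{x_2=0\}$, i.e.\ set $W(x_1,x_2):=\widetilde w(x_1,|x_2|)$. Because the homogeneous Neumann condition is precisely the compatibility condition making the even reflection $C^1$ across the axis, $W$ is a (weak/strong $W^{2,2}_{loc}$) solution of $\Delta W=F$ in the full disc $B_{3/2}$, where $F$ is the even reflection of $\widetilde f$, hence $\|F\|_{L^2(B_{3/2})}\lesssim 1$. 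At this point I would invoke the standard interior estimate: for $\Delta W=F\in L^p$ with $p=2>n=2$... (here $n=2$, and $L^2$ in two dimensions is exactly the borderline), so more carefully I would use $W^{2,2}\hookrightarrow C^{\alpha}$ which holds in dimension $2$ for every $\alpha\in(0,1)$ — indeed Calderón–Zygmund gives $\|D^2 W\|_{L^2(B_1)}\lesssim \|F\|_{L^2(B_{3/2})}+\|W\|_{L^2(B_{3/2})}$, and then Morrey/Sobolev embedding $W^{2,2}(B_1)\hookrightarrow C^{0,\alpha}(B_1)$ for all $\alpha<1$ yields $[W]_{C^\alpha(B_1)}\lesssim 1$. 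Restricting back to $B_1^+$ and adding back $Pg$ (or the $g_0\phi$ correction) finishes the normalized estimate; undoing the rescaling inserts the powers of $R$ in the stated form.

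The main obstacle is the borderline nature of $L^2$ in dimension two: unlike the $L^p$, $p>n$, case where one gets $C^{1,\alpha}$ directly and cleanly, here I only get $C^{0,\alpha}$ for $\alpha<1$ (never $\alpha=1$), and I must route through $W^{2,2}$ Calderón–Zygmund plus Sobolev embedding rather than a one-line Schauder/Morrey estimate — so I should make sure to state the lemma (as the authors do) only for $\alpha\in(0,1)$. A secondary technical point is justifying that the even reflection of a $W^{2,2}$ function satisfying the homogeneous Neumann condition is again $W^{2,2}$ across the flat boundary and solves the reflected equation in the full disc; this is standard (the odd reflection of $\partial_{x_2}\widetilde w$ is continuous and vanishes on the axis, so no singular distributional term appears on $\{x_2=0\}$), but it is the one place where the Neumann condition is genuinely used and should be mentioned explicitly. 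Everything else — the potential-theoretic bound on $Pg$, the interior Calderón–Zygmund estimate, and the scaling bookkeeping — is routine.
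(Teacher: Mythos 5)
Your proof is correct, but it takes a genuinely different route from the paper's. The paper argues by a Campanato-type iteration: after normalizing to $R=1$, at each dyadic-type scale $r_k=\rho^k$ it replaces $w$ by a harmonic function $\widetilde w_k$ with the same Dirichlet data on the curved part of $\partial B_{r_k}^+$ and \emph{homogeneous} Neumann data on the flat part, bounds $\mathop{osc}(w-\widetilde w_k)$ using the ABP estimate of Lemma~\ref{lem. ABP} proved immediately before, and iterates the recursion $A_{k+1}\le C\rho A_k + C\rho^k$ to obtain the $C^\alpha$ decay of oscillation. You instead do a one-shot argument: subtract a single-layer potential $Pg$ to kill the Neumann data (noting that the single-layer potential of an $L^\infty$ density on a line segment is Log-Lipschitz, hence $C^\alpha$ for every $\alpha<1$), extend evenly across $\{x_2=0\}$, and invoke the interior Calder\'on--Zygmund $W^{2,2}$ estimate together with the Sobolev embedding $W^{2,2}\hookrightarrow C^{0,\alpha}$ in two dimensions. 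Both arguments are valid and both correctly flag the borderline character of $L^2$ in $\mathbb R^2$ as the reason one only gets $\alpha<1$; the paper's route is more self-contained (it needs only the ABP lemma already established plus Schauder theory for harmonic functions, no potential theory or $L^p$ theory) and dovetails with its surrounding lemmas, while your reflection-plus-CZ route is shorter once the standard machinery is granted. One point you should make explicit if writing this up: before reflecting you need $\widetilde w\in W^{2,2}$ \emph{up to the flat boundary} (a standard boundary-regularity fact for the homogeneous Neumann problem), not merely interior $W^{2,2}$, so that the even extension is genuinely $W^{2,2}$ across the axis and the distributional Laplacian carries no singular layer.
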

\begin{proof}
    For simplicity, we assume $R=1$ and
    \begin{equation}
        \|f\|_{L^{2}(B_{1}^{+})}+\|g\|_{L^{\infty}(B_{1}\cap\partial\mathbb{R}^{2}_{+})}+\|w\|_{L^{\infty}(B_{1}^{+})}\leq1.
    \end{equation}
    We let $\rho\in(0,1)$ yet to be decided and denote
    \begin{equation}
        A_{k}=\mathop{osc}\limits_{B_{r_{k}}^{+}}w(x),\quad r_{k}=\rho^{k}.
    \end{equation}
    Clearly, we have $A_{0}\leq\|w\|_{L^{\infty}(B_{1}^{+})}\leq1$.

    Let $\widetilde{w_{k}}$ be a harmonic replacement of $w$ in $B_{r_{k}}^{+}$ such that
    \begin{equation}
        \Delta\widetilde{w_{k}}=0\mbox{ in }B_{r_{k}}^{+},\quad\widetilde{w_{k}}\Big|_{\partial B_{r_{k}}\cap\mathbb{R}^{2}_{+}}=w,\quad\frac{\partial\widetilde{w_{k}}}{\partial x_{2}}\Big|_{B_{r_{k}}\cap\partial\mathbb{R}^{2}}=0.
    \end{equation}
    By the standard Schauder estimate for harmonic functions we have
    \begin{equation}
        \mathop{osc}\limits_{B_{r_{k+1}}^{+}}\widetilde{w}\leq C\rho A_{k}.
    \end{equation}
    By Lemma~\ref{lem. ABP}, we have
    \begin{equation}
        \mathop{osc}\limits_{B_{r_{k+1}}^{+}}(w-\widetilde{w})\leq C\rho^{k}.
    \end{equation}
    These estimates imply that $A_{k+1}\leq C\rho A_{k}+C\rho^{k}$. By choosing $\rho$ sufficiently small, we conclude that $A_{k}\leq C\rho^{k\alpha}$ for all $k\geq0$.
\end{proof}
Similarly we have the following $C^{1,\alpha}$ and $C^{2,\alpha}$ Schauder estimate. In fact, the 
$C^{2,\alpha}$ estimate can be found in Gilbarg-Trudinger \cite{GT} (Theorem 6.26). The proof will be omitted.
\begin{lemma}\label{lem. half space: Schauder 12}
    Assume that $w:B_{R}^{+}\to\mathbb{R}$ satisfies the equation
    \begin{equation}
        \Delta w=f,\quad\frac{\partial w}{\partial x_{2}}\Big|_{B_{R}\cap\partial\mathbb{R}^{2}_{+}}=g.
    \end{equation}
    \begin{itemize}
        \item[(1)] If $f\in L^{p}(B_{R}^{+})$ for $p=\frac{2}{1-\alpha}$ and $g\in C^{\alpha}(B_{R}\cap\partial\mathbb{R}^{2}_{+})$, then
        \begin{align}
            &R[w]_{C^{0,1}(B_{R/2}^{+})}+R^{1+\alpha}[w]_{C^{1,\alpha}(B_{R/2}^{+})}\\
            \leq&C\Big(R^{1+\alpha}\|f\|_{L^{p}(B_{R}^{+})}+R\|g\|_{L^{\infty}(B_{R}\cap\partial\mathbb{R}^{2}_{+})}+R^{1+\alpha}[g]_{C^{\alpha}(B_{R}\cap\partial\mathbb{R}^{2}_{+})}+\|w\|_{L^{\infty}(B_{R}^{+})}\Big);
        \end{align}
        \item[(2)] If $f\in C^{\alpha}(B_{R}^{+})$ and $g\in C^{1,\alpha}(B_{R}\cap\partial\mathbb{R}^{2}_{+})$, then
        \begin{align}
            R^{2}[w]_{C^{1,1}(B_{R/2}^{+})}&+R^{2+\alpha}[w]_{C^{2,\alpha}(B_{R/2}^{+})}
            \leq C\Big(R^{2}\|f\|_{L^{\infty}(B_{R}^{+})}+R^{2+\alpha}[f]_{C^{\alpha}(B_{R}^{+})}\\
            &+R\|g\|_{L^{\infty}(B_{R}\cap\partial\mathbb{R}^{2}_{+})}+R^{2+\alpha}[g]_{C^{1,\alpha}(B_{R}\cap\partial\mathbb{R}^{2}_{+})}+\|w\|_{L^{\infty}(B_{R}^{+})}\Big).
        \end{align}
    \end{itemize}
\end{lemma}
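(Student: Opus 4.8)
The plan is to run the standard Campanato ``freezing'' argument, parallel to the proof of Lemma~\ref{lem. half space: Schauder 0} but now iterating the decay of the best affine (for part (1)) or best quadratic (for part (2)) approximation of $w$. By the usual rescaling one reduces to $R=1$ with the data normalized. For balls $B_r(x_0)^+$ with $x_0$ in the open half-plane the classical interior Schauder theory applies directly, so the new content sits in the boundary-centered balls, where the Neumann condition must be used.

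For a boundary ball the essential device is even reflection across $\{x_2=0\}$: a function harmonic in $B_r^+$ whose normal derivative vanishes on the flat face extends to a harmonic function on the full disc $B_r$, and hence inherits all the interior polynomial-approximation estimates. To treat part (1), at scale $r$ I would subtract from $w$ the explicit harmonic function $g(x_0)\,x_2$, so that the residual Neumann datum is $g-g(x_0)$, of size $\le [g]_{C^\alpha}r^\alpha$ on $B_r\cap\partial\mathbb{R}^2_+$. Let $h$ be the harmonic replacement of $w-g(x_0)x_2$ in $B_r^+$ carrying the homogeneous Neumann condition and the same Dirichlet data on $\partial B_r\cap\mathbb{R}^2_+$. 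The iteration is then driven by two facts: (i) reflecting $h$ and applying the interior estimate gives a gain $\inf_{\ell}\int_{B_\rho^+}|h-\ell|^2 \le C(\rho/r)^{6}\inf_{\ell}\int_{B_r^+}|h-\ell|^2$ (infimum over affine $\ell$; the exponent is $n+4$ with $n=2$); (ii) Lemma~\ref{lem. ABP}, applied to $w-g(x_0)x_2-h$ which vanishes on $\partial B_r\cap\mathbb{R}^2_+$, bounds its sup norm by $Cr\big(\|f\|_{L^2(B_r^+)}+[g]_{C^\alpha}r^\alpha\big)$, and since $\|f\|_{L^2(B_r^+)}\le Cr^{\alpha}\|f\|_{L^p(B_r^+)}$ by H\"older's inequality with $p=2/(1-\alpha)$, this remainder is $\le Cr^{1+\alpha}\big(\|f\|_{L^p(B_r^+)}+[g]_{C^\alpha}\big)$. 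Feeding (i)--(ii) into the standard geometric iteration lemma — legitimate because $6>n+2(1+\alpha)=4+2\alpha$ as $\alpha<1$ — produces $\inf_{\ell}\int_{B_r(x_0)^+}|w-\ell|^2\le C\,r^{4+2\alpha}\,\mathcal{R}^2$, where $\mathcal{R}=\|f\|_{L^p(B_1^+)}+\|g\|_{L^\infty}+[g]_{C^\alpha}+\|w\|_{L^\infty(B_1^+)}$. Since the half-discs satisfy a uniform interior cone condition, Campanato's characterization of H\"older spaces converts this into the asserted $C^{1,\alpha}$ and $C^{0,1}$ bounds up to the boundary, with the stated powers of $R$.

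Part (2) is the same scheme one order higher. At a boundary point one subtracts the harmonic polynomial $g(x_0)\,x_2+\partial_{x_1}g(x_0)\,x_1x_2+\tfrac14 f(x_0)\,|x|^2$: the factor $x_1x_2$ is harmonic with normal derivative $x_1$ on $\{x_2=0\}$, while $|x|^2$ has normal derivative $2x_2$ which vanishes there, so after subtraction the Neumann datum becomes $g$ minus its first-order Taylor polynomial at $x_0$ (size $\le [g]_{C^{1,\alpha}}r^{1+\alpha}$) and the forcing becomes $f-f(x_0)$ (with $L^2(B_r^+)$-norm $\le C[f]_{C^\alpha}r^{1+\alpha}$). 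The homogeneous-Neumann harmonic replacement again reflects to a harmonic function, so its best quadratic approximation decays with exponent $n+6=8$; Lemma~\ref{lem. ABP} controls the remainder in sup norm by $Cr^{2+\alpha}\big([f]_{C^\alpha}+[g]_{C^{1,\alpha}}\big)$; and the iteration (again valid since $8>n+2(2+\alpha)=6+2\alpha$) yields the Campanato decay of $\inf_{P}\int_{B_r^+}|w-P|^2$ over quadratics $P$ at rate $r^{6+2\alpha}$, hence the $C^{2,\alpha}$ and $C^{1,1}$ estimates. Alternatively this is precisely \cite[Theorem 6.26]{GT}.

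I expect the only genuinely delicate point to be the boundary bookkeeping in part (2): one must peel off exactly the harmonic polynomials matching the first-order Taylor data of $g$ and the zeroth-order value of $f$ so that every residual datum carries the correct power of $r$, and one must verify that the comparison function — the homogeneous-Neumann harmonic replacement — really is the restriction of an even harmonic function on the full disc, which is what licenses the interior second-derivative decay. Everything else (the geometric iteration, the passage from Campanato decay to H\"older continuity across the flat face, and the interior balls) is routine and in spirit identical to the proof of Lemma~\ref{lem. half space: Schauder 0}.
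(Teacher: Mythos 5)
Your Campanato-type argument is correct in outline and is a natural elaboration of what the paper only gestures at: the paper omits this proof entirely, writing that it is ``similar'' to Lemma~\ref{lem. half space: Schauder 0} and citing Gilbarg--Trudinger, Theorem~6.26, for the $C^{2,\alpha}$ case. Where Lemma~\ref{lem. half space: Schauder 0} iterates the $L^\infty$-oscillation $A_k=\operatorname{osc}_{B_{r_k}^+}w$ against the homogeneous-Neumann harmonic replacement, you iterate the $L^2$-Campanato excess $\inf_\ell\int_{B_r^+}|w-\ell|^2$ (respectively over quadratics), which is the standard variant that actually upgrades to $C^{1,\alpha}$ and $C^{2,\alpha}$; your exponent bookkeeping ($n+4$ over affines, $n+6$ over quadratics, against targets $4+2\alpha$ and $6+2\alpha$), the use of even reflection to reduce the homogeneous-Neumann harmonic replacement to the interior case, and the application of Lemma~\ref{lem. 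ABP} together with H\"older's inequality ($\|f\|_{L^2(B_r^+)}\le Cr^\alpha\|f\|_{L^p}$ for $p=2/(1-\alpha)$) all check out, and the cone condition remark for Campanato's characterization on half-discs is the right closing point. Both approaches share the same three pillars — harmonic replacement, the ABP bound of Lemma~\ref{lem. ABP}, and a geometric iteration — so this is a variant rather than a genuinely new route.

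Two small corrections. First, the comparison polynomial you subtract in part~(2), $g(x_0)\,x_2+\partial_{x_1}g(x_0)\,x_1x_2+\tfrac14 f(x_0)|x|^2$, is \emph{not} harmonic: $\Delta(\tfrac14 f(x_0)|x|^2)=f(x_0)$, which is exactly the point (it freezes the Laplacian so the residual forcing is $f-f(x_0)$), so calling it a ``harmonic polynomial'' is a slip even though the construction is right. Second, the linear term should read $\partial_{x_1}g(x_0)\,(x_1-(x_0)_1)\,x_2$ so that the residual Neumann datum really is $g$ minus its first-order Taylor polynomial \emph{at $x_0$}, which is what carries the factor $[g]_{C^{1,\alpha}}r^{1+\alpha}$. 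Neither affects the validity of the argument.
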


We now state the estimates of $u$ up to second derivatives similar to Theorem~\ref{thm(main). derivative growth rate}.
\begin{proposition}
    Assume that $0<\lambda\leq K\leq\Lambda$, $|\kappa|\leq\Lambda$ and
    \begin{equation}
        [K]_{C^{\alpha}(\mathbb{R}^{2}_{+})}+[\kappa]_{C^{1,\alpha}(\partial\mathbb{R}^{2}_{+})}\leq\Lambda.
    \end{equation}
    If $u\leq M$ satisfies \eqref{eq. half plane: boundary condition}, then there exists $\epsilon$ depending only on $(M,\lambda,\Lambda)$ such that
    \begin{equation}
        \|u\|_{L^{\infty}(B_{R}^{+})}=O(R^{2-\epsilon}),\quad\|Du\|_{L^{\infty}(B_{R}^{+})}=O(R^{1-\frac{1-\alpha}{2}\epsilon}),\quad\|D^{2}u\|_{L^{\infty}(B_{R}^{+})}=O\Big(R^{\alpha}(\ln{R})^{2}\Big).
    \end{equation}
\end{proposition}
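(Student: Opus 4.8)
The plan is to mirror the three-step structure already established in the whole-plane case (Proposition~\ref{prop. C^1-estimate} and Proposition~\ref{prop. C^2-estimate}), but now using the half-plane tools developed in this section: the area growth estimate of Proposition~\ref{prop. half plane: density estimate}, the half-plane self-similar Harnack inequality Lemma~\ref{lem. half space: self similar harnack}, and the half-plane Schauder estimates Lemmas~\ref{lem. half space: Schauder 0} and \ref{lem. half space: Schauder 12}. As before, I would set $v=M-u\geq 0$, which satisfies \eqref{eq. half plane: eql}--\eqref{eq. half plane: boundary inequality}.

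First, for the $L^\infty$ bound: from Proposition~\ref{prop. half plane: density estimate} I extract an integer $N\in 2\mathbb{Z}^++1$ and exponent $\epsilon=-\log_N(1-\frac{1}{2N^2})$ depending only on $(M,\lambda,\Lambda)$ (since $\bar\kappa\le \Lambda e^M$) so that $\frac{1}{N^{2i}}\int_{Q_{N^i}(x)}e^{-2v}\,dx\le C(N^{-\epsilon})^i$ for all admissible squares. Applying Lemma~\ref{lem. half space: self similar harnack} to $w=v/\bar\Lambda$ (with the boundary gradient bound $|\partial_{x_2}v|\le\bar\kappa$ on $\partial\mathbb{R}^2_+$, rescaled appropriately — note the lemma requires the boundary normal derivative $\le 1$, which holds after dividing by $\max(\bar\Lambda,\bar\kappa)$), I get $v(y)\le C(v(\text{base point})+N^{(2-\epsilon)k}\ln N)$ for $y$ in the relevant box; chaining this along a ladder of boxes up to scale $R$ and using $v(0)$ finite yields $\|v\|_{L^\infty(B_R^+)}=O(R^{2-\epsilon})$, hence $\|u\|_{L^\infty(B_R^+)}=O(R^{2-\epsilon})$.

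Next, for the $C^1$ bound: I apply the $C^{1,\alpha}$ half-plane Schauder estimate, Lemma~\ref{lem. half space: Schauder 12}(1), on $B_R^+$ with $f=Ke^{2u}$ and $g=-\kappa e^{u(\cdot,0)}$. The right-hand side involves $\|f\|_{L^p(B_R^+)}$ with $p=\frac{2}{1-\alpha}$; I interpolate between the $L^1$ bound (from the area estimate, which gives $\int_{B_R^+}e^{2u}=O(R^{2-\epsilon})$) and the $L^\infty$ bound (trivially $e^{2u}\le e^{2M}$) to control this norm, and I bound $\|g\|_{L^\infty}$ and $[g]_{C^\alpha}$ using $u\le M$ and the already-obtained $L^\infty$ growth of $u$ together with the hypothesis $[\kappa]_{C^{1,\alpha}}\le\Lambda$. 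Balancing the resulting powers of $R$ (exactly as in Proposition~\ref{prop. C^1-estimate}, where the choice of $\alpha=1-\epsilon$ produced the exponent $1-\epsilon^2/2$; here keeping $\alpha$ general gives $1-\frac{1-\alpha}{2}\epsilon$) yields $\|Du\|_{L^\infty(B_R^+)}=O(R^{1-\frac{1-\alpha}{2}\epsilon})$. Finally, for the $C^2$ bound, I follow Proposition~\ref{prop. C^2-estimate}: a dichotomy via the standard (interior or boundary) Harnack/Schauder estimate on unit balls $B_2(x)^+$ shows either $\min_{B_1(x)^+}u\ge -C\ln R$ or $\max_{B_1(x)^+}u\le -C'\ln R$, and in both cases $[e^{2u}]_{C^\alpha(B_1(x)^+)}=O(\ln R)$ — in the ``small'' case using the tiny prefactor $e^{2u}\le R^{-C'}$, in the ``large'' case using the $C^1$ estimate to bound $[u]_{C^\alpha}$ (which now also picks up a factor from the boundary term, producing the extra logarithm). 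Then $[Ke^{2u}]_{C^\alpha(B_R^+)}=O(\ln R)$ and likewise $[g]_{C^{1,\alpha}}=O(\ln R)$ by another application of the $C^1$ bound, and Lemma~\ref{lem. half space: Schauder 12}(2) on $B_R^+$ gives $\|D^2u\|_{L^\infty(B_R^+)}=O(R^\alpha\ln R)$. The statement's $(\ln R)^2$ arises because, unlike the interior case, the boundary Harnack step and the boundary Schauder term each contribute a logarithm.

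The main obstacle I anticipate is the careful handling of the boundary terms: verifying that Lemma~\ref{lem. half space: self similar harnack}'s hypothesis on $|\partial_{x_2}w|\le 1$ survives the rescaling $w=v/\max(\bar\Lambda,\bar\kappa)$, tracking how the boundary condition $g=-\kappa e^{u}$ propagates through the half-plane Schauder estimates (the $[g]_{C^{1,\alpha}}$ term requires controlling $D(e^u)$ on the boundary, hence needs the $C^1$ estimate of $u$ restricted to $\partial\mathbb{R}^2_+$), and confirming that the Harnack dichotomy near the boundary still holds — for this one either invokes a boundary Harnack inequality for $\Delta v = \bar K e^{-2v}\ge 0$ with oblique-type data, or simply notes that $v$ is superharmonic and applies the half-plane mean value inequality \eqref{eq. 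MVP center high 1}--\eqref{eq. MVP center high 2} on unit-scale balls. Each of these is routine given the tools at hand, but bookkeeping the dependence of constants on $(M,\lambda,\Lambda,\alpha)$ and the exact exponents is where care is needed. I would present the proof sketchily, as the section's preamble promises, pointing to the whole-plane arguments for the repetitive estimates.
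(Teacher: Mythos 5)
Your outline mirrors the paper's at the level of which tools get invoked (the half-plane area density estimate, the half-plane self-similar Harnack inequality, and the half-plane Schauder estimates), but there is a genuine dependency gap in the middle that introduces a circularity. To apply the $C^{1,\alpha}$ Schauder estimate, Lemma~\ref{lem. half space: Schauder 12}(1), you need $[g]_{C^\alpha(B_R\cap\partial\mathbb{R}^2_+)}$ for $g=-\kappa e^{u(\cdot,0)}$, and you propose to control this ``using $u\le M$ and the already-obtained $L^\infty$ growth of $u$ together with $[\kappa]_{C^{1,\alpha}}\le\Lambda$.'' But the $L^\infty$ growth of $u$ gives no modulus-of-continuity information for $e^u$: $[e^u]_{C^\alpha}$ requires a bound on $[u]_{C^\alpha}$ (or the dichotomy), which you have not yet established at this stage. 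Later, in your $C^2$ discussion, you propose to bound $[u]_{C^\alpha}$ in the ``large'' dichotomy case ``using the $C^1$ estimate'' --- but the $C^1$ estimate is precisely what you are in the middle of proving and itself depends on $[g]_{C^\alpha}$. This is circular.

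The paper breaks the circularity with an intermediate step you have skipped: it first applies the pure $C^\alpha$ Schauder estimate of Lemma~\ref{lem. half space: Schauder 0}, which crucially requires only $\|g\|_{L^\infty}$ (trivially bounded by $|\kappa|e^M$) and not $[g]_{C^\alpha}$, to obtain a crude bound $[u]_{C^\alpha(B_R^+)}\le C(1+R^{2-\alpha-\epsilon/2})$; this same lemma, applied at unit scale, gives $[u]_{C^\alpha(B_1(x)^+)}\le C\ln R$ in the ``large'' branch of the Harnack dichotomy (where $|u|\le C\ln R$ on $B_2(x)^+$). Combining the two branches yields $[g]_{C^\alpha}=O(\ln R)$ \emph{before} any $C^1$ estimate, and only then is Lemma~\ref{lem. half space: Schauder 12}(1) applied to obtain $\|Du\|=O(R^{1-\frac{1-\alpha}{2}\epsilon})$. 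A second pass through the dichotomy, now also using the Lipschitz bound, gives $[g]_{C^{1,\alpha}}=O((\ln R)^2)$ (the product $[u]_{C^\alpha}[u]_{C^{0,1}}$ on unit balls is the source of the squared logarithm, not two separate ``boundary contributions'' as you suggest), and then Lemma~\ref{lem. half space: Schauder 12}(2) closes the argument. You should insert the $C^\alpha$ Schauder step (Lemma~\ref{lem. half space: Schauder 0}) and route the dichotomy through it rather than through the $C^1$ estimate; otherwise the argument does not close.
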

\begin{proof}
    The idea is to estimate $\|u\|_{C^{\alpha}(B_{R}^{+})}$ and then improve the regularity. Throughout the proof, $C_{i}$'s refer to constants depending only on $(M,\lambda,\Lambda,\alpha)$.
    \begin{itemize}
        \item Step 1: By Proposition~\ref{prop. half plane: density estimate} and Lemma~\ref{lem. half space: self similar harnack}, we see
        \begin{equation}
            -u(x)\leq C_{1}(1+|x|^{2-\epsilon}).
        \end{equation}
        \item Step 2: By Proposition~\ref{prop. half plane: density estimate}, we see
        \begin{equation}\label{eq. half plane: laplace Lp}
            \|\Delta u\|_{L^{p}(B_{R}^{+})}\leq C_{2}(p)\cdot(1+R^{(2-\epsilon)/p}),
        \end{equation}
        so we choose $p=2$ and obtain from Lemma~\ref{lem. half space: Schauder 0} that
        \begin{equation}
            [u]_{C^{\alpha}(B_{R}^{+})}\leq C_{3}(1+R^{2-\alpha-\epsilon/2}).
        \end{equation}
        \item Step 3: We now show for $R\geq100$,
        \begin{equation}\label{eq. half space: boundary C alpha}
            \Big[\frac{\partial u}{\partial x_{2}}\Big]_{C^{\alpha}(B_{R}\cap\partial\mathbb{R}^{2}_{+})}\leq C_{4}\ln{R}.
        \end{equation}
        The idea is similar to that in Proposition~\ref{prop. C^2-estimate}. In fact, as $\frac{\partial u}{\partial x_{2}}=-\kappa e^{u}$ is globally bounded, it suffices to show
        \begin{equation}
            [e^{u}]_{C^{\alpha}(B_{1}(x)^{+})}\leq C_{4}\ln{R}.
        \end{equation}
        for any $x\in B_{R}\cap\partial\mathbb{R}^{2}_{+}$. By choosing $k=1$ in Lemma~\ref{lem. half space: self similar harnack} we see that
        \begin{equation}
            \mbox{either }\min_{B_{2}(x)^{+}}u\geq-C_{5}\ln{R},\quad\mbox{or }\max_{B_{2}(x)^{+}}u\leq-10\ln{R}.
        \end{equation}
        In the first case, we apply Lemma~\ref{lem. half space: Schauder 0} and obtain
        \begin{equation}
            [e^{u}]_{C^{\alpha}(B_{1}(x)^{+})}\leq e^{M}[u]_{C^{\alpha}(B_{1}(x)^{+})}\leq C_{4}\ln{R}.
        \end{equation}
        In the second case, we have
        \begin{equation}
            [e^{u}]_{C^{\alpha}(B_{1}(x)^{+})}\leq R^{-10}[u]_{C^{\alpha}(B_{1}(x)^{+})}\leq C_{3}\leq C_{4}\ln{R},
        \end{equation}
        where we have used Step 2 to control $[u]_{C^{\alpha}(B_{1}(x)^{+})}$.
        \item Step 4: We apply Lemma~\ref{lem. half space: Schauder 12} (1) and \eqref{eq. half plane: laplace Lp}-\eqref{eq. half space: boundary C alpha}, and obtain
        \begin{equation}
            [u]_{C^{0,1}(B_{R}^{+})}+R^{\alpha}[u]_{C^{1,\alpha}(B_{R}^{+})}\leq C_{6}(1+R^{1-\frac{1-\alpha}{2}\epsilon}).
        \end{equation}
        \item Step 5: We show for $R\geq100$,
        \begin{equation}\label{eq. half space: boundary C 1 alpha}
            \Big[\frac{\partial u}{\partial x_{2}}\Big]_{C^{1,\alpha}(B_{R}\cap\partial\mathbb{R}^{2}_{+})}\leq C_{7}(\ln{R})^{2}.
        \end{equation}
        It suffices to estimate the $C^{1,\alpha}$ norm for $e^{u}$. For any $x,y\in B_{R}\cap\partial\mathbb{R}^{2}_{+}$ with $|x-y|\leq1$,
        \begin{equation}
            \frac{|\nabla e^{u(x)}-\nabla e^{u(y)}|}{|x-y|^{\alpha}}\leq\{[u]_{C^{\alpha}(B_{1}(x)^{+})}[u]_{C^{0,1}(B_{1}(x)^{+})}+[u]_{C^{1,\alpha}(B_{1}(x)^{+})}\}\cdot\max_{B_{1}(x)^{+}}e^{u}.
        \end{equation}
        We then follow a similar method as in Step 3 and get \eqref{eq. half space: boundary C 1 alpha}.
        \item Step 6: We argue similarly as in Proposition~\ref{prop. C^2-estimate}, first obtaining that
        \begin{equation}
            [K e^{2u}]_{C^{\alpha}(B_{R}^{+})}\leq C_{9}\ln{R},
        \end{equation}
        and then reaching the desired estimate of $\|D^{2}u\|_{L^{\infty}(B_{R}^{+})}$ using Lemma~\ref{lem. half space: Schauder 12} (2).
    \end{itemize}
\end{proof}

\subsection{Classification of half plane solutions with an upper bound}
In this subsection, we study the classification problem of \eqref{eq. half plane: boundary condition} for solutions with a global upper bound and prove Theorem~\ref{thm. half plane}.

\begin{lemma} \label{lem. half}
    Assume that $u:\mathbb{R}^{2}_{+}\to\mathbb{R}$ satisfies \eqref{eq. half plane: boundary condition}, then $u_{zz}-u_{z}^{2}$ is holomorphic and takes real value on $x_2=0$. 
\end{lemma}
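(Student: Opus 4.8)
The plan is to treat the two assertions separately. Holomorphy of $P:=u_{zz}-u_z^2$ is essentially already in hand: on every disc compactly contained in the open half plane $\mathbb{R}^2_+$ the function $u$ solves $\Delta u+e^{2u}=0$, so Lemma~\ref{lem. Hang-Wang} applies verbatim and gives $\partial_{\bar z}P\equiv 0$ there; hence $P$ is holomorphic in $\mathbb{R}^2_+$. By the Schauder estimates and growth bounds established earlier in this section, $u\in C^2(\overline{\mathbb{R}^2_+})$, so $P$ extends continuously up to $\partial\mathbb{R}^2_+$, and the only remaining point is to show that this boundary trace is real.

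For the reality statement I would first rewrite $P$ in the real coordinates $z=x_1+ix_2$. A routine computation gives
\[
P=u_{zz}-u_z^2=\tfrac14\Big[\big(u_{x_1x_1}-u_{x_2x_2}-u_{x_1}^2+u_{x_2}^2\big)-2i\big(u_{x_1x_2}-u_{x_1}u_{x_2}\big)\Big],
\]
so that $\operatorname{Im}P=-\tfrac12\big(u_{x_1x_2}-u_{x_1}u_{x_2}\big)$, and it suffices to prove the pointwise identity $u_{x_1x_2}(x_1,0)=u_{x_1}(x_1,0)\,u_{x_2}(x_1,0)$ for all $x_1\in\mathbb{R}$. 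This is where the boundary condition enters. Since $\partial_{x_2}u(x_1,0)=-\kappa e^{u(x_1,0)}$ holds identically in $x_1$ and $u$ is $C^2$ up to the boundary, I can differentiate this relation tangentially in $x_1$, obtaining $u_{x_1x_2}(x_1,0)=-\kappa\,u_{x_1}(x_1,0)\,e^{u(x_1,0)}$, and then substitute the boundary condition once more to replace $-\kappa e^{u(x_1,0)}$ by $u_{x_2}(x_1,0)$. This yields the desired identity, hence $\operatorname{Im}P\equiv 0$ on $\{x_2=0\}$.

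There is no genuinely hard step here; the only point requiring care is the regularity of $u$ up to $\partial\mathbb{R}^2_+$, needed both to make sense of the boundary trace of the holomorphic function $P$ and to justify the tangential differentiation of the nonlinear Neumann condition (and the symmetry of mixed second derivatives) --- which is exactly what the half-plane Schauder machinery of the preceding subsection supplies. It is worth noting that this reality on the real axis is precisely the structural input that, combined with a half-plane adaptation of the winding-number argument in Lemma~\ref{lem. not linear}, will pin $P$ down to a real constant and thereby trigger the explicit ODE reductions used in the proof of Theorem~\ref{thm. half plane}.
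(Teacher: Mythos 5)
Your proof is correct and takes essentially the same approach as the paper: differentiate the Neumann condition $\partial_{x_2}u=-\kappa e^{u}$ tangentially in $x_1$, substitute the condition back to obtain $u_{x_1x_2}=u_{x_1}u_{x_2}$ on $\{x_2=0\}$, and observe this is exactly the vanishing of $\operatorname{Im}(u_{zz}-u_z^2)$. You spell out the real-coordinate expansion of $u_{zz}-u_z^2$ and the interior application of Lemma~\ref{lem. Hang-Wang} more explicitly than the paper does, but the argument is the same.
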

\begin{proof}
    By taking the tangential derivative of the boundary condition in \eqref{eq. half plane: boundary condition}, we see
    \begin{equation}
        0=\frac{\partial}{\partial x_{1}}(\frac{\partial u}{\partial x_{2}}+\kappa e^{u})=u_{x_{1}x_{2}}+\kappa e^{u}u_{x_{1}}=u_{x_{1}x_{2}}-u_{x_{2}}u_{x_{1}}, \quad \text{on $x_2=0$}.
    \end{equation}
 Therefore, the imaginary part
    \begin{equation}
        Im(u_{zz}-u_{z}^{2})=\frac{1}{2}(u_{x_{1}}u_{x_{2}}-u_{x_{1}x_{2}})=0.
    \end{equation}
\end{proof}

\begin{proof}[Proof of Theorem \ref{thm. half plane}]
In view of Lemma \ref{lem. half}, we may use Schwartz reflection to extend $u_{zz}-u_z^2$ to an entire holomorphic function $f$. Since $Du$ and $D^{2}u$ are of order $o(|z|)$, it follows that $f$ is of order $o(|z|^2)$, thus 
\begin{equation}
u_{zz}-u_z^2=az+b,\quad a,b\in\mathbb{R}.
\end{equation}

Using a similar method as in Lemma \ref{lem. not linear}, we have $a=0$ and thus
\begin{equation}
    u_{zz}-u_{z}^{2}=C_0\in\mathbb{R}.
\end{equation}
Precisely speaking, we argue by contradiction and suppose $a\neq0$. We look at the image of the curve $z(\theta)=R e^{i\theta}, \theta\in[0,\pi]$ when $R\gg 1$. Let 
\begin{equation}
    f(\theta)=u_{zz}-az-b\Big|_{z=z(\theta)}=u_{z}^{2}\Big|_{z=z(\theta)}.
\end{equation}
\begin{itemize}
    \item On the one hand, $f(\theta)=u_{zz}-az-b\Big|_{z=z(\theta)}$ and since $|u_{zz}|=o(|z|)$, we see $\arg f(\theta)=\arg(-a)+\theta+o(1)$, so
    \begin{equation}
        \int_{0}^{\pi}\frac{d}{d\theta}\arg{f(\theta)}d\theta=\pi+o(1).
    \end{equation}
    \item On the other hand,  $f(\theta)=u_{z}^{2}\Big|_{z=z(\theta)}$. Notice that $|u_{z}|\geq c\sqrt{R}$, while by \eqref{eq. half plane: boundary condition},
    \begin{equation}
        |Im(u_{z})|=|\frac{\kappa}{2}e^{u}|\leq|\kappa|e^{M}\mbox{ when }z\in\mathbb{R}.
    \end{equation}
   Hence $u_{z}$ is "almost real" on the real line, so
    \begin{equation}
        \int_{0}^{\pi}\frac{d}{d\theta}\arg{f(\theta)}d\theta=2\int_{0}^{\pi}\frac{d}{d\theta}\arg{u_{z}}d\theta=2k\pi+o(1).
    \end{equation}
\end{itemize}
Therefore we have reached a contradiction.

 Like in the proof of Theorem~\ref{thm. EGLX}, we set $v:=e^{-u}$ and obtain from the fact $u_{zz}-u_z^2=C_0$ (where $C_0\in\mathbb{R}$ by Lemma \ref{lem. half}) that
\begin{align}
        \partial_{x_1x_1}v-\partial_{x_2x_2}v=&-4C_0 v, \\
        \partial_{x_1x_2}v=&0.
\end{align}
Due to the restricted normalization \[
u_{\lambda, a}(x_1, x_2):=u(\lambda x_1+a, \lambda x_2)+\ln \lambda \quad (\lambda>0 , a\in \mathbb{R}),
\]  we shall have three cases:
\begin{enumerate}
    \item $u_{zz}-u_z^2\equiv 0\implies $
    \[
    v=\frac{1+x_1^2+(x_2+b)^2}{2};
    \]
   \item $u_{zz}-u_z^2\equiv -\frac{1}{4}\implies $
   \[
   v=\frac{1}{2}e^{x_1}+\frac{1+t^2}{2}e^{-x_1}+t\cos (x_2+b);
   \]
   \item $u_{zz}-u_z^2\equiv \frac{1}{4} \implies $
   \[
   v=\frac{1}{2}e^{x_2+b}+\frac{1+t^2}{2}e^{-(x_2+b)}+t\cos(x_1).
   \]
\end{enumerate}
Taking into account of the boundary condition, we get 
\[
\frac{\partial v}{\partial x_2}=-e^{-u} \frac{\partial u}{\partial x_2}=\kappa, \quad \text{on $x_2=0$.}
\]
Then the desired conclusion follows by simple computations.

\end{proof}

\end{document}